\documentclass[letterpaper, 11 pt]{article} 

\title{Joint Design of Time-of-Use Schedules and Price Levels: a Bilevel Model with Client Participation and Load Flexibility\thanks{This work was supported by the D\'efi Inria-EDF.}}

\usepackage{authblk}

\author[1, 2, 3]{Abdellah Bulaich Mehamdi}
\author[2]{Wim van Ackooij}
\author[3]{Luce Brotcorne}
\author[1, 3]{St\'ephane Gaubert}
\author[2]{Quentin Jacquet}

\affil[1]{CMAP, École polytechnique, Institut Polytechnique de Paris, Palaiseau, France.}
\affil[2]{EDF Lab Paris-Saclay, OSIRIS department, Palaiseau, France.}
\affil[3]{Institut National de Recherche en Informatique et en Automatique (Inria), France.}

\date{\vspace{-3em}}

\usepackage{enumitem}
\usepackage{xltabular}
\usepackage{booktabs}
\usepackage{placeins} 

\usepackage{tabularx}
\usepackage{dsfont}
\usepackage{amssymb}
\usepackage{lipsum}
\usepackage{algorithm}
\usepackage{algpseudocode}
\usepackage{url}
\usepackage{multicol}
\usepackage{sansmath}
\usepackage{hyperref}
\usepackage{amsmath} 
\usepackage{adjustbox}
\usepackage{stmaryrd}
\usepackage{array}
\usepackage{moresize}
\usepackage{float}
\usepackage{bbold}
\usepackage{siunitx}
\usepackage{subfig}
\usepackage{textcomp}
\usepackage{mathtools}
\usepackage{notoccite}
\usepackage[english]{babel}
\usepackage{tikz}
\usepackage{circuitikz}
\usetikzlibrary{positioning}
\usepackage{tikz-3dplot}
\usepackage{csquotes}
\usepackage{wallpaper}
\usepackage[capitalise]{cleveref}
\usepackage{mathrsfs}
\usepackage[toc]{appendix}
\usepackage{amsthm}
\usepackage{dutchcal}
\usepackage{graphicx}
\usepackage[official]{eurosym}
\usepackage{pgfplots}
\usepackage{xcolor}
\usepgfplotslibrary{ternary}
\pgfplotsset{compat=1.18}
\usepgfplotslibrary{fillbetween}
\allowdisplaybreaks

\definecolor{myblue}{RGB}{0, 102, 204} 
\definecolor{mylightblue}{RGB}{0, 153, 255} 
\definecolor{myred}{RGB}{204, 0, 0} 
\definecolor{myorange}{RGB}{255, 140, 0} 
\definecolor{mygreen}{RGB}{0, 153, 76} 
\definecolor{myblack}{RGB}{20, 20, 20} 

\newtheorem{theorem}{Theorem}[section]
\newtheorem{corollary}[theorem]{Corollary}
\newtheorem{proposition}[theorem]{Proposition}
\newtheorem{definition}{Definition}
\newtheorem{lemma}[theorem]{Lemma}
\newtheorem{assumption}{Assumption}
\theoremstyle{definition}
\newtheorem{example}[theorem]{Example}
\newtheorem{remark}[theorem]{Remark}

\usepackage{multirow}

\newcommand{\CCH}{\operatorname{CCH}}
\newcommand{\CCO}{\operatorname{CCO}}
\newcommand{\CP}{\operatorname{CP}}
\newcommand{\BOP}{\operatorname{BOP}}
\newcommand{\RToU}{\operatorname{RToU}}
\newcommand{\PoR}{\operatorname{PoR}}
\newcommand{\HO}{\operatorname{HO}}
\newcommand{\AHO}{\operatorname{A-HO}}
\newcommand{\Bill}{\mathsf{B}}
\newcommand{\Vill}{\mathsf{V}}
\newcommand{\Proj}{\operatorname{\Pi}}
\newcommand{\upperb}{\operatorname{ub}}
\newcommand{\lowerb}{\operatorname{lb}}
\newcommand{\Discomfort}{\mathsf{D}}
\newcommand{\Cost}{\mathsf{C}}
\newcommand{\argmin}{\operatornamewithlimits{argmin}}
\newcommand{\N}{\mathbb{N}}
\newcommand{\R}{\mathbb{R}}

\usepackage[sorting=nyt,natbib=true,backend=biber,doi=false,url=false,isbn=false,eprint=true,giveninits=true,maxbibnames=99, style=alphabetic]{biblatex}
\usepackage{indentfirst}
\usepackage{fullpage}
\setlist[itemize]{nosep}
\setlist[enumerate]{nosep}    

\begin{document}

\maketitle

\begin{abstract}
We study the joint design of a time-of-use (ToU) tariff (the partition of the day into pricing blocks and the price level attached to each block) by a profit-maximizing electricity retailer facing flexible clients who may refuse the offer in favour of an outside option. The problem is modeled as a bilevel program in which each client, at the lower level, reshapes its consumption under pointwise bounds and a fixed daily energy total, and simultaneously chooses a participation level penalized by a quadratic regularizer. We prove the client's best response is computable in loglinear time. We derive two equivalent single-level reformulations: a closed-form continuous one tackled via stochastic gradient ascent (using a continuous reparametrization of block start times and durations) and a mixed-integer nonlinear program solved via dedicated solvers. The stochastic ascent scheme provides Clarke-critical accumulation points almost surely, in a scalable way. On a case study from the French retail electricity market with $5000$ simulated households, the gradient method matches a commercial mixed-integer solver on the pointwise problem to within $0.3\%$ of the optimal profit while running two to eight times faster on $100$ representative clients. Restricting unconstrained pointwise prices to admissible ToU schedules yields an estimated profit loss of $1.3\%$- $13.8\%$ for two-period tariffs and $0.6\%$-$6.6\%$ for three-period tariffs on the instances considered.

\smallskip

\emph{\bf Keywords:} Bilevel optimization; Time-of-use pricing; Demand response; Electricity retail; Nonsmooth optimization; Stochastic gradient ascent; Mixed-integer nonlinear programming.
\end{abstract}

\section{Introduction}
\label{sec:intr}

\subsection{Context and motivation}

The opening of electricity markets to competition has altered the management problems faced by retailers. In particular, where tariffs were once set by regulators on the basis of average costs, retailers must now actively design pricing schemes that simultaneously recover procurement costs, attract and retain clients, and comply with an evolving regulatory landscape. As in other businesses, the announced prices will have a significant impact on both the retailer and various customers with this difference that electricity is a vital resource that one can not go without. One way to create a pricing menu is to temporally differentiate prices.

\smallskip

One such instrument is the time-of-use (ToU) tariff, which divides the day into a small number of periods (peak and off-peak being the canonical example) and charges a different price in each. This temporal differentiation serves a dual purpose. From the retailer's perspective, it reflects the varying cost of procuring electricity on wholesale markets, where prices can fluctuate substantially between daytime and nighttime hours. From the system's perspective, it sends a price signal that can discourage consumption during periods of high network stress, reducing the need for costly peaking capacity. The practical appeal of ToU tariffs lies precisely in their simplicity: unlike real-time pricing, which exposes clients to continuous price fluctuations, a two- or three-period ToU schedule is transparent, predictable, and easy to bill. It is therefore no coincidence that ToU structures form the backbone of most residential demand response programs worldwide~\cite{albadi2008summary, vardakas2015survey}, e.g., the long-established \emph{heures pleines, heures creuses} schedule in the French retail electricity market (peak and off-peak hours).

\smallskip

Properly setting ToU prices requires anticipating how clients will respond and how far they will shift consumption away from expensive periods, given that each has only a limited capacity to adjust. This behavioral response, known as \emph{demand response} \cite{albadi2008summary,vardakas2015survey}, is both the point of the tariff and its main complication from the retailer's perspective. A retailer who ignores it will systematically overestimate revenues in peak periods and underestimate them in off-peak periods. Furthermore, the notions of peak and off-peak periods are implicitly linked to high and low consumption periods, thus implicitly ``redefined" under significant tariff changes. 
Modelling this faithfully, however, forces one to confront a hierarchical decision problem: the retailer chooses prices knowing that clients will re-optimize their own consumption in response, subject to their own constraints and preferences. Such a hierarchy is captured by so called bilevel optimization, e.g., \cite{bard2013practical,dempe2002foundations,dempe2015bilevel, colson2005bilevel, kleinert2021survey}. This class of optimization problems allows one to represent so called leader-follower games. At the upper (or leader) level, the retailer selects both the temporal structure of the tariff (which hours belong to which pricing block) and the price level associated with each block, with the objective of maximizing profit. At the lower (or follower) level, each client decides how to allocate their electricity usage across the day. Load shifting is possible but not free: deviating from one's baseline (ideal) consumption profile carries a cost, which can be modeled through a quadratic disutility that penalizes the magnitude of the shift. Consumption must also respect physical bounds; there is a minimum and a maximum load in each period, and the total daily energy consumption cannot be changed. Beyond these operational constraints, clients retain the option to simply reject the proposed tariff and fall back on a default external offer. This participation constraint ensures that the retailer's pricing strategy must be competitive against the outside option, which could be a regulated tariff or a competitor's offer.

\smallskip

The resulting problem adds a coupling on top of the inherent difficulty of each level individually, making it algorithmically more challenging. Indeed, the retailer's optimization problem is nonconvex and nonsmooth since the client's response is, in general, a nonsmooth function of the announced prices, while the partitioning of the day into pricing blocks introduces a combinatorial aspect. This work proposes a comprehensive modelling framework covering this situation as well as effective solution methods.

\subsection{Contribution}
We study the design of time-of-use (ToU) prices in which a profit-maximizing retailer faces a set of flexible clients who each decide whether to accept the proposed offer and, if they do, optimize their consumption to minimize the sum of their bill and the discomfort of shifting consumption. The main results are the following.

\begin{itemize}[leftmargin=0pt, itemindent=*]
\item \emph{Model.} We formulate the ToU pricing problem as a bilevel program in which the retailer chooses both the partition of the day into pricing blocks and the price level of each block, while every client decides whether to accept the offer and how to reshape its consumption under pointwise bounds and a fixed daily energy total. 
\item \emph{Follower's optimal response.} We characterize the follower's optimal response in closed-form: it decouples into a consumption choice and a participation choice, each a Euclidean projection computable in $O(T\log{T})$ operations (Theorem~\ref{thm:cp:closed-form}, Corollary~\ref{cor:cp:closed-form-complexity}). Theorem~\ref{thm:cp:kkt-conditions} derives a sufficient equilibrium system with explicit big-$M$ bounds on all multipliers under a mild non-degeneracy condition (Assumption~\ref{assu:cons}), yielding two equivalent single-level reformulations: a closed-form one~\eqref{model:cp:closed-form} and a mixed-integer one~\eqref{model:cp:kkt-conditions}. 
\item \emph{Leader's problem theory.} The upper level objective is proven to be an elementary selection in the sense of Bolte and Pauwels~\cite{bolte2020mathematical}, and its selection derivative is evaluable in $O(T\log{T})$ operations (Theorem~\ref{thm:pointwise-selection}), when the time-of-use constraints are dropped or relaxed through reparametrization. Consequently, any accumulation point of the stochastic gradient ascent algorithm is almost surely Clarke critical (Corollary~\ref{corollary:sgd-convergence}).
\item \emph{Price of representability.} To quantify the performance gap between the schedule-free pointwise model and the time-of-use model, we estimate the \emph{price of representability}, defined as the profit loss incurred by restricting unconstrained pointwise prices to admissible ToU schedules. 
\item \emph{Case study: French retail electricity market.} We conduct numerical experiments on datasets generated by EDF R\&D's SMACH simulator~\cite{Huraux}, which simulates 5,000 residential households clustered into distinct profiles, with a distribution based on real data. On 100 representative clients, our gradient-based method matches the profit obtained by a commercial mixed-integer solver on the pointwise problem to within $0.3\%$ of optimality, while running two to eight times faster. We further quantify the cost of restricting unconstrained pointwise prices to ToU tariffs: on the instances considered, this restriction induces an estimated profit loss of $1.3\%$--$13.8\%$ for two-period tariffs and $0.6\%$--$6.6\%$ for three-period tariffs. We report the price menus obtained by the model and compare the solution methods across different cluster sizes, flexibility parameters, and numbers of ToU blocks. 
\end{itemize}
\subsection{Related Work}

The design of time-of-use tariffs requires understanding how clients respond to prices, and casting that response within a hierarchical optimization. This has been approached from different angles:
\begin{enumerate}[leftmargin=0pt, itemindent=*]
\item \emph{Demand response.} Broadly defined as the adjustment of end-use consumption to time-varying prices, both motivates ToU tariffs and complicates their design. The surveys of Albadi and El-Saadany~\cite{albadi2008summary} and Vardakas et al.~\cite{vardakas2015survey} emphasize that the benefit a tariff extracts depends on demand elasticity, flexible-load availability, and metering sophistication. They also note that ignoring this response systematically misestimates revenue across peak and off-peak periods. Flath~\cite{flath2013optimization} proposed a mixed-integer program to mitigate the complexity arising from hourly real-time prices while retaining the flexibility to match the underlying market dynamics. He cast the design of time-of-use tariffs as the projection of consumption costs onto a discrete set of time blocks, treating the partition of the day as a decision, whereas Yang et al.~\cite{yang2012game} took the periods as exogenous and adopted a game-theoretic view of the utility--client interaction. This contrast between periods as a decision and periods as data is one axis along which we positioned our contribution.
\item \emph{Bilevel optimization and Stackelberg games.} The retailer commits to a tariff and the clients optimize in response; this hierarchy is naturally a Stackelberg game and, in optimization terms, a bilevel program~\cite{von1952theory}. Bilevel optimization has several applications in different domains, including transportation, energy, and machine learning, the interested reader is referred to~\cite{bard2013practical, sinha2017review, dempe2020bilevel} for further details. Such models are NP-hard even in the case of linear lower and upper levels~\cite{buchheim2023bilevel}. Several bilevel models of pricing with and without ToU have been proposed. They differ by their description of the leader and follower, and by their solution approaches. 
\begin{enumerate}[leftmargin=0pt, itemindent=*]
\item \emph{Leader's model.} Since Zugno et al.~\cite{zugno2013bilevel} embedded tariff design in a demand-response market this way, the leader--follower lens has become standard, and prior work divides according to what the retailer may decide. The larger group fixes the partition and optimizes only price levels, most often by replacing the lower level with its KKT conditions: Kov{\'a}cs~\cite{kovacs2016bilevel, kovacs2019bilevel} gave early formulations for time-variant tariffs and a day-ahead multi-follower extension with battery-equipped prosumers; Besan{\c{c}}on et al.~\cite{besanccon2020bilevel} and Anjos et al.~\cite{anjos2021optimal} treated time-and-level-of-use tariffs via mixed-integer reformulations; and Grimm et al.~\cite{grimm2021optimal} and Abate et al.~\cite{abate2024dynamic} compared pricing schemes and market structures within the same framework. A smaller group co-designs periods and prices, but only heuristically, through evolutionary or genetic algorithms~\cite{alves2020optimizing, venkatraman2022optimal}, while other studies relax lower-level global optimality~\cite{soares2020designing} or add a regulatory tier~\cite{aussel2020trilevel}; broader surveys are given in~\cite{antunes2020bilevel, huang2025review}. The bilevel ToU model is thus mature in its treatment of price optimization, yet the combinatorial design of the temporal partition has not been optimized jointly with price levels inside an exact single-level reformulation.
\item \emph{Follower's model.} Beyond what the retailer decides, a further question is whether the client accepts the tariff at all. Within these models the client decides only how to consume, not whether to participate; yet it is the outside option that makes the problem economically meaningful, as a client-level individual-rationality constraint. This is usually left implicit, the follower being assumed to stay while the average price is below a threshold~\cite{kovacs2019bilevel, soares2020designing}. The closest explicit precedent is Crowley et al.~\cite{crowley2024energy}, who enforce budget balance and individual rationality as leader constraints in an energy community, later relaxing perfect leader knowledge through online learning~\cite{crowley2025learning}. Methodologically, Jacquet et al.~\cite{jacquet2024quadratic} study quadratic regularization of the follower's contract choice, providing the smoothed responses on which we build without considering the flexibility and the time-of-use schedules. An explicit, client-level acceptance decision has not, however, been combined with a combinatorially designed ToU tariff.
\item \emph{Solution approaches.} Because bilevel models embed an optimization problem as a constraint, they resist standard solvers, and the literature offers two remedies. The first replaces the lower level with its Karush-Kuhn-Tucker (KKT) conditions to yield a single-level Mathematical Program with Complementarity Constraints (MPCC); this underlies most of the ToU works cited above and our indirect method, and once the leader also partitions the day it falls under mixed-integer bilevel programming~\cite{kleinert2021survey}. The second is gradient-based: when the lower-level solution varies smoothly with the leader's decision, one differentiates through its optimality conditions to obtain a hypergradient of the upper-level objective~\cite{gould2016differentiating}. This route classically requires lower-level strong convexity, which our nested lower problem lacks because of its bilinear terms; Chen et al.~\cite{chen2023bilevel} delineate where hypergradient methods break down once this assumption is dropped, while penalty- and envelope-based approaches recover convergence guarantees for merely convex or nonconvex lower levels~\cite{shen2023penalty, lumei2024firstorder, liu2024moreau}. Our follower problem is likewise nonconvex, but its solution is a composition of projections onto boxed simplices, which are piecewise differentiable and evaluable in log-linear time; this structure lets us appeal to Bolte and Pauwels~\cite{bolte2020mathematical} to guarantee that almost surely any accumulation point of our stochastic gradient scheme is Clarke critical.
\end{enumerate}
\end{enumerate}

Table~\ref{tab:positioning} summarizes the position of this work relative to the models discussed above, along the dimensions that define our contribution. To the best of our knowledge, no prior work has considered the interaction between the three ingredients (the partition of the day into pricing blocks, the optimization of the price level of each block, the participation and the flexibility of clients) within a single model.

\begin{table}[!htbp]
\centering
\small
\begin{adjustbox}{max width=\textwidth}
\begin{tabular}{@{}l c c c l c@{}}
\toprule
Reference & Partition & Price levels & Participation & Lower-level treatment & Exact \\
 & decided? & decided? & modelled? & & single-level? \\
\midrule
Flath~\cite{flath2013optimization} & Yes & Yes & No & clustering / projection & No \\
Yang et al.~\cite{yang2012game} & No & Yes & No & game-theoretic best response & No \\
Zugno et al.~\cite{zugno2013bilevel} & No & Yes & No & KKT & Yes \\
Kov{\'a}cs~\cite{kovacs2016bilevel, kovacs2019bilevel} & No & Yes & Implicit & KKT & Yes \\
Besan{\c{c}}on et al.~\cite{besanccon2020bilevel} & No & Yes & No & KKT / mixed-integer & Yes \\
Anjos et al.~\cite{anjos2021optimal} & No & Yes & No & KKT / mixed-integer & Yes \\
Grimm et al.~\cite{grimm2021optimal} & No & Yes & No & KKT & Yes \\
Abate et al.~\cite{abate2024dynamic} & No & Yes & No & KKT & Yes \\
Alves et al.~\cite{alves2020optimizing} & Yes & Yes & No & heuristic (evolutionary)  & No \\
Venkatraman et al.~\cite{venkatraman2022optimal} & Yes & Yes & No & heuristic (genetic) & No \\
Soares et al.~\cite{soares2020designing} & No & Yes & Implicit & relaxed optimality & No \\
Crowley et al.~\cite{crowley2024energy} & No & Yes & Community-level & KKT  & Yes \\
\midrule
This work & Yes & Yes & Client-level, explicit & closed form \emph{and} optimality system & Yes \\
\bottomrule
\end{tabular}
\end{adjustbox}
\caption{Positioning against the closest bilevel time-of-use pricing models. ``Partition decided?'' indicates whether the assignment of time steps to pricing periods is a decision variable rather than data; ``Participation modelled?'' whether the client's accept/reject decision appears explicitly. ``Exact single-level?'' indicates whether the bilevel problem is replaced by an equivalent single-level program, not whether that program is solved to global optimality.}
\label{tab:positioning}
\end{table}

\smallskip

The remainder of the paper is organized as follows. Section~\ref{sec:mod} formalizes the bilevel model, and its components. Section~\ref{sec:analytical} derives the client response mappings and the retailer's problem, and presents the reformulations of the bilevel problem. Section~\ref{sec:algorithmic} describes our solution methodology, including the descent algorithm for pointwise pricing and the projection procedure for ToU schedules. Section~\ref{sec:num} reports the numerical experiments and sensitivity analysis. Section~\ref{sec:concl} concludes with implications, limitations, and directions for future research.

\section{Electricity pricing model}
\label{sec:mod}
Building on the bilevel perspective introduced in Section \ref{sec:intr}, we now formalize the electricity pricing model used throughout this paper. The complete notation is summarized in Appendix~\ref{app:notation}.

\subsection{Retailer's optimization problem}
The electricity retailer maximizes its profit by jointly determining the price levels $q = (q_n)_{n \in \llbracket N \rrbracket} \in \R^N$ and the time-of-use schedule $h = (h_{t,n})_{t \in \llbracket T \rrbracket, n \in \llbracket N \rrbracket} \in \{0,1\}^{T \times N}$, while accounting for each client's best response to the proposed prices. A pair $(q,h)$ is seen by the clients only through the effective price vector $p(q,h) \in \R^T_+$ it induces, obtained by aggregating the price levels along the schedule:
\begin{equation} \label{eq:effective-price}
p_t(q,h) := \textstyle\sum_{n=1}^N q_n h_{t,n}, \qquad \forall t \in \llbracket T \rrbracket,
\end{equation}
where $h_t = (h_{t,n})_{n \in \llbracket N \rrbracket}$ denotes the $t$-th row of the schedule. The retailer's optimization problem reads:
\begin{align*} \tag{BOP} \label{model:retailer}
\max_{q, h, x, y} \quad & \textstyle\sum_{k=1}^K \left[ \Bill(p(q,h), x_k) - \Cost(x_k) \right] y_k w_k \\
\text{s.t.} \quad & q \in \mathcal{Q}, \quad h \in \mathcal{H}, \\
& (x_k,y_k) \text{ solves $\eqref{model:cp}$ at $p=p(q,h)$}, \qquad \forall k \in \llbracket K \rrbracket. 
\end{align*}

The optimization variables are the price levels $q$, constrained to lie in the feasible set $\mathcal{Q}$, and the schedule $h$, constrained to lie in the schedule set $\mathcal{H}$; both sets are defined in Section~\ref{subsec:feasible-pricing} and encode, respectively, the bounds and the ordering imposed on individual prices, and the institutional and operational constraints the schedule must satisfy. The problem is subject to the clients' best responses, captured by the lower-level problem~\eqref{model:cp} through the variables $\big(x_k=(x_{t,k})_{t\in \llbracket T \rrbracket}, y_k\big) \in \R^T \times [0,1]$, which denote, respectively, the optimal consumption $x_{t,k}$ at each instant $t \in \llbracket T \rrbracket$ and the participation decision $y_k$ of client $k$. The retailer must anticipate these responses when setting prices, which is what makes the problem bilevel. The functions $\Bill$ and $\Cost$ denote, respectively, the client's bill and the retailer's supply cost. The bill is the total amount paid by the client, that is the inner product of the effective price vector $p=p(q,h)$ and the consumption vector $x_k$:
\begin{equation} \label{eq:bill}
\Bill(p, x) := \langle p, x\rangle = \textstyle\sum_{t=1}^T p_t x_t.
\end{equation}
The parameter $w_k$ is the weight of client $k$ in the profit function, interpreted as a measure of the client's importance or market share. The supply cost $\Cost$ is left unspecified, subject to the following standing hypotheses.

\begin{assumption}[Standing hypotheses on the supply cost] \label{assu:cost}
The supply cost $\Cost : \R^T \to \R$ satisfies:
\begin{enumerate}[leftmargin=0pt, itemindent=*]
\item $\Cost$ is continuous;
\item $\Cost$ is Lipschitz continuous on the consumption box $[x^{\lowerb}_k, x^{\upperb}_k]$, with constant $L^{\Cost}_k$, for every $k \in \llbracket K \rrbracket$;
\item $\Cost$ is an elementary selection with a known selection derivative computable in $O(T)$ elementary operations (see Appendix~\ref{app:elementary-selections} for the definitions).
\end{enumerate}
\end{assumption}

Item~(1) is used for the existence result of Corollary~\ref{cor:optpes}, item~(2) for the Lipschitz estimates of the hyper-objective, and item~(3) for the elementary-selection property of Theorem~\ref{thm:pointwise-selection}. A linear cost $\Cost(x) = \textstyle\sum_{t=1}^T c_t x_t$, which is the specification used in the numerical experiments of Section~\ref{sec:num}, satisfies all three with $L^{\Cost}_k = \|c\|$.

\smallskip

Following the usual convention for bilevel programs, \eqref{model:retailer} is written in its optimistic form, in which the leader may select among the lower-level's optimal responses, the one that aligns with the upper-level objective; Corollary~\ref{cor:optpes} shows that this choice is immaterial here, since the optimistic and pessimistic formulations coincide.

\subsection{Client's optimization problem}
Given the effective price signal $p = p(q,h)$ induced by the period prices $q \in \mathcal{Q}$ and the schedule $h \in \mathcal{H}$ defined above, each client $k$ jointly determines its optimal consumption profile $x_k$ and decides whether to accept the proposed offer $p$ (modeled by the variable $y_k$) by solving:
\begin{align*} \tag{$\CP_k(p)$} \label{model:cp}
\min_{x_k, y_k} \quad & \left[\Bill(p, x_k) + \Discomfort_k(x_k, x_k^0)\right] y_k
+ \Vill_k^0 (1 - y_k) + \tfrac{1}{2\beta_k} y_k^2 \\
\text{s.t.} \quad & x_k \in \mathcal{X}_k, \\
& 0 \le y_k \le 1,
\end{align*}
where $\mathcal{X}_k$ is the consumption feasible set given by the boxed simplex:
\begin{align} \label{set:consumption-feasible}
\mathcal{X}_k = \left\{ x_k \in \R^{T} \;\middle|\; 
\begin{aligned}
& \textstyle\sum_{t=1}^T x_{t,k} = \textstyle\sum_{t=1}^T x^0_{t,k}, \\
& x^{\lowerb}_{t,k} \le x_{t,k} \le x^{\upperb}_{t,k}, && \forall t \in \llbracket T \rrbracket
\end{aligned}
\right\}.
\end{align}
The variable $x_k$ is the consumption profile of client $k$, with $x_{t,k}$ denoting the consumption at time $t$. The bounds $x^{\lowerb}_{t,k}$ and $x^{\upperb}_{t,k}$, which satisfy $0 \le x^{\lowerb}_{t,k} \le x^{\upperb}_{t,k}$, limit how far the consumption may be shifted at each time step, while the equality constraint ensures that the total consumption matches a baseline (ideal) consumption profile, denoted by $x_k^0$. The disutility of load shifting is the nominal (quadratic) distance to $x_k^0$:
\begin{equation*}
\Discomfort_k(x_k, x_k^0) = \tfrac{1}{2\alpha_k} \|x_{k} - x^0_{k}\|_2^2,
\end{equation*}
where $\| \cdot \|_2$ is the Euclidean norm and $\alpha_k > 0$ is a flexibility parameter: the smaller $\alpha_k$, the larger the penalty on deviations, and hence the less flexible the client. For the discomfort to be expressed in $\EUR$, the parameter $\alpha_k$ carries units of $\mathrm{kWh}^2/\EUR$; equivalently, $\alpha_k$ is the amount of energy that client $k$ is willing to displace in response to a unit price differential, which is exactly the reading given by the closed-form of Theorem~\ref{thm:cp:closed-form}. 

\smallskip

The variable $y_k$ is the participation decision, with $y_k = 1$ meaning that the offer is accepted and $y_k = 0$ that it is refused in favor of the outside option. The constant $\Vill_k^0$ is the value of that outside option, that is, the optimal value of the client $k$'s problem under the outside tariff $p_k^0$, discomfort included:
\begin{equation*}
\Vill_k^0 = \min_{x_k \in \mathcal{X}_k} \left\{ \Bill(p_k^0, x_k) + \Discomfort_k(x_k, x_k^0) \right\}.
\end{equation*} 
Client $k$ compares the reference value $\Vill_k^0$ with the value of the proposed offer, which is the optimal value of the client $k$'s problem under the proposed tariff $p$, discomfort included:
\begin{equation} \label{model:cco} \tag{$\CCO_k(p)$}
\Vill_k(p) = \min_{x_k \in \mathcal{X}_k} \left\{ \Bill(p, x_k) + \Discomfort_k(x_k, x_k^0) \right\}.
\end{equation}
Since $y_k \ge 0$, the inner minimization over $x_k$ in~\eqref{model:cp} may be carried out independently of $y_k$ and absorbed into $\Vill_k(p)$, so that the participation decision is the solution of the one-dimensional problem
\begin{align} \label{model:cch} \tag{$\CCH_k(p)$}
\min_{y_k \in [0,1]} \quad & \Vill_k(p) y_k + \Vill_k^0 (1 - y_k) + \tfrac{1}{2\beta_k} y_k^2 .
\end{align}
Problems~\eqref{model:cco} and~\eqref{model:cch} are the two components of the sequential decomposition on which the whole analysis rests; Theorem~\ref{thm:cp:closed-form} makes this precise.

\smallskip

The quadratic term $\tfrac{1}{2\beta_k} y_k^2$, with $\beta_k > 0$, regularizes the accept/reject decision, replacing a binary choice by a continuous participation level $y_k \in [0,1]$. The parameter $\beta_k$ carries units of $1/\EUR$, equivalently, $1/\beta_k$ is a monetary quantity, namely the width of the band over which acceptance moves from $0$ to $1$. Client $k$ refuses outright when the offer is worse, accepts with certainty once the offer undercuts the outside option by more than $1/\beta_k$, and is hesitant in between; see Figure~\ref{fig:client-choice}. A larger $\beta_k$ therefore yields a sharper, more deterministic decision, and $\beta_k \to \infty$ recovers the hard individual-rationality constraint.

\begin{figure}[!htp]
\centering
\begin{tikzpicture}[x=1.2cm, y=3cm, >=stealth]

\draw[gray!20] (-0.25,-0.1) grid (4.2, 1.3);

\draw[->, thick] (-0.25, -0.1) -- (4.2, -0.1); 
\draw[->, thick] (-0.25, -0.1) -- (-0.25, 1.3);

\foreach \x in {0,1,...,4} {
\draw (\x, -0.13) -- (\x, -0.07) node[below=0.15cm, font=\small] {\x};
}

\foreach \y in {0, 0.2, 0.4, 0.6, 0.8, 1.0, 1.2} {
\draw[thick] (-0.28, \y) -- (-0.22, \y) node[left=0.1cm, font=\small] {\y};
}

\draw[ultra thick, myblue] 
(-0.25, 1) -- (1, 1) 
-- (3, 0)  
-- (4.2, 0); 

\draw[dashed, gray] (1, -0.1) -- (1, 1.05) node[above, black] {$\Vill_k^0 - \tfrac{1}{\beta_k}$};
\draw[dashed, gray] (3, -0.1) -- (3, 1.05) node[above, black] {$\Vill_k^0$};

\node[rotate=90] at (-1.1, 0.6) {Client participation level};
\node at (2, -0.3) {Client bill and discomfort};

\end{tikzpicture}
\caption{Client $k$'s participation level $y_k$ as a function of the value $\Vill_k(p)$ of the offer, for $\Vill_k^0 = 3$ and $\beta_k = 0.5$.}
\label{fig:client-choice}
\end{figure}

\smallskip 

The client's problem therefore couples the choice of consumption profile with the participation decision, balancing cost minimization, disutility from load shifting, and the attractiveness of the offer relative to the outside option.

\subsection{Feasible pricing strategies}
\label{subsec:feasible-pricing}
Under time-of-use (ToU) pricing, a price scheme is described by two ingredients: a set of period prices $q = (q_n)_{n \in \llbracket N \rrbracket}$ and a schedule $h$ that assigns each time step $t \in \llbracket T \rrbracket$ to one period $n \in \llbracket N \rrbracket$. The effective price seen by a client at time $t$ is then the price $p_t(q,h)$ of the period active at $t$, as defined in~\eqref{eq:effective-price}. We now specify the two feasible sets $\mathcal{Q}$ and $\mathcal{H}$ from which these ingredients are drawn.

\smallskip

The periods $n \in \llbracket N \rrbracket$ are ordered from the cheapest to the most expensive, so that the set of admissible price levels takes the form:
\begin{equation} \label{set:price-levels}
\mathcal{Q} := \big\{q \in \R^{N} \;\big|\; p^{\lowerb} \le q_1 \le \cdots \le q_N \le p^{\upperb}\big\},
\end{equation}
where $p^{\lowerb}$ and $p^{\upperb}$ are the regulatory bounds on the unit price, assumed to satisfy $0 \le p^{\lowerb} < p^{\upperb}$; we write $\mathcal{P} := [p^{\lowerb}, p^{\upperb}]^{T}$ for the corresponding box of effective price vectors. The schedule set $\mathcal{H}$, on the other hand, groups time steps into periods and must respect institutional and operational constraints, for example: regulatory off-peak windows, fixed period durations, a limited number of switches per day, or minimum block lengths. Admissible schedules must in addition assign exactly one period to each time step. We consider the following model:
\begin{align} \label{set:schedules}
\mathcal{H} = \left\{ h \in \{0,1\}^{T \times N} \;\middle|\;
\begin{aligned}
& \textstyle\sum_{n=1}^N h_{t,n} = 1, && \forall t \in \llbracket T \rrbracket, \\
& \textstyle\sum_{t=1}^T |h_{t,n}-h_{t-1,n}| \le 2S_n, && \forall n \in \llbracket N \rrbracket, \\
& \textstyle\sum_{t=1}^T h_{t,n} = L_n, && \forall n \in \llbracket N \rrbracket
\end{aligned}
\right\},
\end{align}
where each period $n \in \llbracket N \rrbracket$ is characterized by its length $L_n$ and its maximum number of subperiods $S_n$. Throughout, the time index is understood cyclically: we identify $T$ with the discrete torus $\mathbb{Z}/T\mathbb{Z}$, so that $t-1$ is taken modulo $T$ and, in particular, $h_{0,n} = h_{T,n}$. This reflects the daily periodicity of the tariff and makes the switch count below consistent across the day boundary: a period that is active at the end of the day and at its beginning forms a single subperiod rather than two. The linking constraint $\sum_{n=1}^N h_{t,n} = 1$ ensures that exactly one period is active at each time step $t$; the constraint $\sum_{t=1}^T |h_{t,n}-h_{t-1,n}| \le 2S_n$ counts the switches in and out of period $n$ and hence ensures that period $n$ consists of at most $S_n$ subperiods; and the last constraint fixes its total duration to $L_n$. Introducing the auxiliary variables $\delta_{t,n}$, the switch-counting constraint is equivalent to the following linear inequalities:
\begin{align} \label{eq:switch-counting}
h_{t,n} - h_{t-1,n} &\le \delta_{t,n}, \qquad h_{t-1,n} - h_{t,n} \le \delta_{t,n}, \qquad &\forall t \in \llbracket T \rrbracket, n \in \llbracket N \rrbracket, \\
\sum_{t=1}^T \delta_{t,n} &\le 2S_n, &\qquad \forall n \in \llbracket N \rrbracket. \notag
\end{align}

The data are assumed to satisfy $1 \le S_n \le L_n$ and the consistency condition $\sum_{n=1}^N L_n = T$, which is necessary for $\mathcal{H}$ to be nonempty and which we assume throughout; it is also sufficient, since assigning the periods in index order as $N$ consecutive cyclic blocks of respective lengths $L_n$ produces an element of $\mathcal{H}$.

\begin{example}
Figure \ref{fig:periods-subperiods} models the schedule $h$ as a partition of the time axis into periods, each of which has at most two subperiods $S_1=S_2=S_3=2$ with the lengths $L_1=L_2=9$ and $L_3=6$, where period (1) represents the off-peak hours, period (2) the shoulder hours, and period (3) the peak hours. The figure illustrates how the time steps are allocated to each period and how the subperiods are distributed within each period and notes the fact that the period between 21:00 and 3:00 is a single subperiod considering that time is seen as periodic.

\begin{figure}[!ht]
\centering
\begin{tikzpicture}[x=0.9cm,y=0.8cm, font=\small]

\draw[->] (0.8,0.0) -- (9.5,0.0) node[right]{t};
\foreach \t in {1,...,9} {
    \draw (\t,-0.1) -- (\t,0.1);
}

\foreach \t in {0,...,8} {
    \node at (\t+1.,-0.3) {\pgfmathtruncatemacro{\tick}{3*\t}\tick};
}

\def\yone{3.6}
\def\ytwo{2.4}
\def\ythr{1.2}
\def\h{0.9} 

\draw[draw=gray] (1,\yone) rectangle (9,\yone-\h);
\draw[draw=gray] (1,\ytwo) rectangle (9,\ytwo-\h);
\draw[draw=gray] (1,\ythr) rectangle (9,\ythr-\h);

\foreach \x in {1,...,9} {
    \draw[dashed] (\x,0.0) -- (\x,4);
}

\draw[dashed] (0.8,\yone) -- (9.2,\yone);
\draw[dashed] (0.8,\yone-\h) -- (9.2,\yone-\h);
\draw[dashed] (0.8,\ytwo) -- (9.2,\ytwo);
\draw[dashed] (0.8,\ytwo-\h) -- (9.2,\ytwo-\h);
\draw[dashed] (0.8,\ythr) -- (9.2,\ythr);
\draw[dashed] (0.8,\ythr-\h) -- (9.2,\ythr-\h);

\draw[draw=myblue, fill=myblue!25] (1,\yone) rectangle (2,\yone-\h);
\draw[draw=myblue, fill=myblue!25] (5,\yone) rectangle (6,\yone-\h);
\draw[draw=myblue, fill=myblue!25] (8,\yone) rectangle (9,\yone-\h);
\node[anchor=east] at (0.9,\yone-\h/2) {Off-peak period (1)};

\draw[draw=mygreen, fill=mygreen!20] (2,\ytwo) rectangle (4,\ytwo-\h);
\draw[draw=mygreen, fill=mygreen!20] (7,\ytwo) rectangle (8,\ytwo-\h);
\node[anchor=east] at (0.9,\ytwo-\h/2) {Shoulder period (2)};

\draw[draw=myorange, fill=myorange!20] (4,\ythr) rectangle (5,\ythr-\h);
\draw[draw=myorange, fill=myorange!20] (6,\ythr) rectangle (7,\ythr-\h);

\node[anchor=east] at (0.9,\ythr-\h/2) {Peak period (3)};

\end{tikzpicture}
\caption{Example assignment; $N=3$, $S_1 = S_2 = S_3 = 2$, $L_1 = L_2 = 9$, and $L_3 = 6$.}
\label{fig:periods-subperiods}
\end{figure}
\end{example} 

Pointwise pricing, in which the retailer is free to propose a distinct price at each instant $t \in \llbracket T \rrbracket$, is the special case obtained by taking one period per time step, i.e. $N = T$ with $L_n = S_n = 1$ for every period $n \in \llbracket N \rrbracket$, so that $h$ is a permutation matrix and $p(q,h)$ ranges over the whole box $\mathcal{P}$: the ordering imposed in~\eqref{set:price-levels} is then a mere labeling convention, absorbed by the choice of the permutation.

\section{Mathematical properties and single-level reformulation}
\label{sec:analytical}

For a given price signal $p$, the client's problem~\eqref{model:cp} is a nested optimization problem: the client chooses its consumption profile $x_k$ and decides whether to accept the retailer's offer or take the outside option. The two decisions are coupled through the product $\Vill_k(p) y_k$, so the program is nonconvex. Nevertheless, it decomposes into two subproblems: a \emph{consumption choice problem} and a \emph{participation choice problem}. The former is a quadratic program with linear constraints; the latter is a one-dimensional quadratic problem given the consumption choice. Theorem~\ref{thm:cp:closed-form} provides closed-form characterizations of their solutions.

\begin{assumption}
\label{assu:cons}
For all $t \in T$ and $k \in K$, we assume that the baseline consumption $x_k^0$ satisfy $x^{\lowerb}_{t,k} \leq x^0_{t,k} \leq x^{\upperb}_{t,k}$, and satisfy:
\begin{equation*}
\sum_{t=1}^T x^{\lowerb}_{t,k} < \textstyle\sum_{t=1}^T x^0_{t,k} < \textstyle\sum_{t=1}^T x^{\upperb}_{t,k}.
\end{equation*}
\end{assumption}

The condition $x^{\lowerb}_{t,k} \leq x^0_{t,k} \leq x^{\upperb}_{t,k}$ ensures that the baseline consumption is feasible, and the second condition ensures that there are more than one point in the set. Throughout the remainder of this paper, we suppose that Assumption \ref{assu:cons} remains satisfied.

\begin{theorem} \label{thm:cp:closed-form}
Given a client $k$ and a price signal $p$, \eqref{model:cp} admits an optimal solution, satisfying the following.
\begin{enumerate}[leftmargin=0pt, itemindent=*]
\item The optimal participation choice is unique and admits the closed-form:
\begin{equation*}
y_k^\star(p) = \Proj_{[0,1]}\!\left(\beta_k\big(\Vill_k^0-\Vill_k(p))\right),
\end{equation*}
where $\Vill_k(p)$ is defined in \eqref{model:cco}.
\item The minimizer of the consumption choice problem~\eqref{model:cco}, whose optimal value is $\Vill_k(p)$, is unique and admits the closed-form:
\begin{equation*}
x_k^\star(p) = \Proj_{\mathcal{X}_k}\!\left(x^0_k - \alpha_k p\right).
\end{equation*}
\item If $y_k^\star(p) > 0$, then $(x_k^\star(p), y_k^\star(p))$ is the unique optimal solution of~\eqref{model:cp}. If $y_k^\star(p) = 0$, the set of optimal solutions is $\mathcal{X}_k \times \{0\}$, and $(x_k^\star(p), 0)$ is one of its elements.
\item The decision $x_k^\star(p)$ is piecewise affine on a polyhedral subdivision of $\R^T$, and the decision $y_k^\star(p)$ is piecewise quadratic on a semi-algebraic refinement of that subdivision.
\end{enumerate}
\end{theorem}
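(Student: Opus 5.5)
The plan is to exploit the sequential decomposition already noted before the statement: since $y_k \ge 0$ enters the objective of \eqref{model:cp} only through the nonnegative multiplier of $\Bill(p,x_k)+\Discomfort_k(x_k,x_k^0)$, we have $\min_{x_k,y_k} = \min_{y_k\in[0,1]}\{\min_{x_k\in\mathcal{X}_k}(\cdots)\,y_k + \Vill_k^0(1-y_k)+\tfrac{1}{2\beta_k}y_k^2\}$. This reduces everything to \eqref{model:cco} and \eqref{model:cch}. Existence of an optimum for \eqref{model:cp} then follows from existence in each piece: $\mathcal{X}_k$ is nonempty (it contains $x_k^0$ by Assumption~\ref{assu:cons}) and compact, and the objective is continuous.

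\emph{Item 2.} Complete the square. We have $\langle p,x\rangle+\tfrac{1}{2\alpha_k}\|x-x_k^0\|^2 = \tfrac{1}{2\alpha_k}\|x-(x_k^0-\alpha_k p)\|^2 + \text{const}(p)$. Minimizing over the closed convex set $\mathcal{X}_k$ is therefore the Euclidean projection of $x_k^0-\alpha_k p$. It is unique by strong convexity.

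\emph{Item 1.} Problem \eqref{model:cch} is equivalent to minimizing $\tfrac{1}{2\beta_k}y^2 - (\Vill_k^0-\Vill_k(p))y$ over $[0,1]$. This is the strictly convex function $\tfrac{1}{2\beta_k}\big(y-\beta_k(\Vill_k^0-\Vill_k(p))\big)^2$ plus a constant, so its unique minimizer is the projection onto $[0,1]$.

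\emph{Item 3.} If $y^\star>0$, consider any optimal pair $(x,y)$. Its value is at least the value at $(x_k^\star,y)$, with equality only when $y=0$ or $x$ minimizes \eqref{model:cco}. Uniqueness of $y^\star$ forces $y=y^\star>0$, and then uniqueness in item~2 forces $x=x_k^\star$. If $y^\star=0$, the objective at $y=0$ equals $\Vill_k^0$ independently of $x$. Hence every $x\in\mathcal{X}_k$ is optimal, and no pair with $y>0$ is optimal, again by uniqueness of $y^\star$.

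\emph{Item 4.} This is the main obstacle. I would use the structure of projection onto a boxed simplex. The KKT conditions give $x_t=\Proj_{[x^{\lowerb}_{t,k},x^{\upperb}_{t,k}]}(z_t-\lambda)$, with $z=x_k^0-\alpha_k p$ and the scalar $\lambda$ fixed by the sum constraint. For each active pattern (which coordinates are at the lower bound, at the upper bound, or free), the conditions are linear equalities and inequalities in $(p,\lambda)$. On such a cell, $\lambda$ is an affine function of $p$: it equals the average over the free coordinates of $z_t$ minus the residual mass. The free set is nonempty generically, and the degenerate case can be handled by a pattern in which $\lambda$ is chosen as a boundary value. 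Each cell is thus a polyhedron, and these polyhedra cover $\R^T$ with $x^\star$ affine on each. This gives a polyhedral subdivision after refining to a common face structure.

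On each cell, $\Vill_k(p)=\langle p,x^\star\rangle+\tfrac{1}{2\alpha_k}\|x^\star-x_k^0\|^2$ is quadratic in $p$. Then $y^\star$ is obtained by splitting further according to the signs of the quadratic functions $\beta_k(\Vill_k^0-\Vill_k(p))$ and $1-\beta_k(\Vill_k^0-\Vill_k(p))$. These sets are semi-algebraic, and on each resulting piece $y^\star$ is $0$, $1$, or a quadratic in $p$. The delicate point is making the covering by active patterns a genuine subdivision, with overlaps only on lower-dimensional faces and consistent values. I would settle this via uniqueness of the projection, which makes the pieces agree on overlaps, or alternatively by citing the standard fact that projection onto a polyhedron is piecewise affine on the normal-fan subdivision.
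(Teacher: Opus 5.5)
Your proposal is correct and follows the same route as the paper. Both use the sequential decomposition into \eqref{model:cco} and \eqref{model:cch} (valid because $y_k \ge 0$), identify the two minimizers as Euclidean projections (you justify this by completing the square), split item~3 on the sign of $y_k^\star(p)$, and obtain item~4 from the piecewise-affine structure of the projection onto the polyhedron $\mathcal{X}_k$, which the paper cites from Facchinei and Pang rather than building through your active-set cells. Your item~3 is in fact slightly more complete than the paper's: you explicitly use uniqueness of the minimizer of \eqref{model:cch} to rule out optimal pairs with $y_k \neq y_k^\star(p)$, including pairs with $y_k > 0$ when $y_k^\star(p) = 0$.
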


\begin{proof}
The proof exploits the nested structure of \eqref{model:cp}: the client first minimizes $\Bill(p,x_k) + \Discomfort_k(x_k, x_k^0)$ over $x_k$, independently of the participation decision $y_k$, and then chooses $y_k$ giving the resulting value $\Vill_k(p)$.

\begin{enumerate}[leftmargin=0pt, itemindent=*]
\item The participation decision $y_k$ is nonnegative, hence the part of the objective of the client's problem~\eqref{model:cp} that depends on $x_k$ can be embedded into the value function $\Vill_k(p)$ defined in \eqref{model:cco}, and the problem \eqref{model:cp} becomes solely the participation problem~\eqref{model:cch}. The choice of $y_k$ is therefore a one-dimensional quadratic program with linear constraints, which has a unique solution denoted by $y_k^\star(p)$ and it is given by the projection of $\beta_k(\Vill_k^0 - \Vill_k(p))$ onto $[0,1]$.
\item The problem \eqref{model:cco} is a quadratic program with linear constraints, which has a unique solution denoted by $x_k^\star(p)$ and it is given by the projection of $x^0_k - \alpha_k p$ onto the convex set $\mathcal{X}_k$.
\item Going back to the \eqref{model:cp}, we distinguish two cases. If $y_k^\star(p) > 0$, the variable $x_k$ must be chosen to minimize the objective function, which is achieved by $x_k^\star(p)$, and it is unique by the previous argument. If $y_k^\star(p) = 0$, then the objective function is independent of $x_k$, and any choice of $x_k \in \mathcal{X}_k$ yields the same optimal value. In particular, $(x_k^\star(p), 0)$ is an optimal solution.
\item \emph{Piecewise polynomial property.} The projection onto the polyhedron $\mathcal{X}_k$ is piecewise affine over polyhedral regions \cite[Proposition~4.1.4]{facchinei2003finite}, so $x_k^\star(p)$ is piecewise affine. Since $y_k^\star(p)$ is the projection of a piecewise quadratic function of $p$ onto $[0,1]$, it is piecewise quadratic. More precisely, since the projection onto [0,1] can be obtained by comparing that function to $0$ and $1$ (which delimit semi-algebraic regions), it is piecewise quadratic on algebraic regions. \qedhere
\end{enumerate}
\end{proof}

\begin{corollary}
\label{cor:cp:closed-form-complexity}
Given a client $k$ and a price signal $p$, the projection $x_k^\star(p)$ can be computed in $O(T\log{T})$ elementary operations, and given $x_k^\star(p)$, the participation decision $y_k^\star(p)$ can be computed in $O(T)$ elementary operations. Overall, the pair $(x_k^\star(p), y_k^\star(p))$ can be computed in $O(T\log{T})$ elementary operations.
\end{corollary}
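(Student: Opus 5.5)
The plan is to use the closed forms of Theorem~\ref{thm:cp:closed-form} and reduce the computation of $x_k^\star(p)=\Proj_{\mathcal{X}_k}(z)$, with $z:=x^0_k-\alpha_k p$, to a one-dimensional root-finding problem. I will treat this as the projection onto a boxed simplex.

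\textbf{Step 1: KKT reduction.} The projection solves a strictly convex QP with one equality constraint and box constraints. Since these constraints are affine, KKT holds without further qualification. The optimal point is therefore
\[
x_t(\lambda)=\min\{x^{\upperb}_{t,k},\max\{x^{\lowerb}_{t,k},\,z_t-\lambda\}\}
\]
for a scalar multiplier $\lambda$ satisfying $\phi(\lambda):=\sum_t x_t(\lambda)=E_k:=\sum_t x^0_{t,k}$.

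Each summand is continuous, nonincreasing and piecewise affine in $\lambda$, with breakpoints $z_t-x^{\upperb}_{t,k}$ and $z_t-x^{\lowerb}_{t,k}$. Hence $\phi$ is continuous, nonincreasing and piecewise affine with at most $2T$ breakpoints. It equals $\sum_t x^{\upperb}_{t,k}$ for $\lambda$ very negative and $\sum_t x^{\lowerb}_{t,k}$ for $\lambda$ very positive. By Assumption~\ref{assu:cons}, $E_k$ lies strictly between these two values, so a root exists by the intermediate value theorem. Any root yields the same $x$, since the projection is unique.

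\textbf{Step 2: locating the root in $O(T\log T)$.} I will sort the $2T$ breakpoints in $O(T\log T)$ operations. I will then sweep them in increasing order while maintaining three quantities: the sum of upper bounds of coordinates currently at their upper bound, the sum of lower bounds of coordinates at their lower bound, and the sum $\sum z_t$ together with the count $m$ of free coordinates.

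On each interval between consecutive breakpoints, $\phi(\lambda)=U+L+\sum_{\text{free}}z_t-m\lambda$. Each breakpoint moves exactly one coordinate between states, so it is an $O(1)$ update. Evaluating $\phi$ at successive breakpoints then costs $O(1)$ each. Once the interval containing $E_k$ is found, $\lambda$ is obtained by solving the affine equation. If $m=0$ on that interval, $\phi$ is constant there and any $\lambda$ in it works.

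Reconstructing $x$ costs $O(T)$, and forming $z$ costs $O(T)$. The total is $O(T\log T)$. The main care point is the bookkeeping of ties between breakpoints and of the flat pieces of $\phi$. I expect this to be routine: equal breakpoints can be processed in any order, and flat pieces are harmless by the uniqueness argument above.

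\textbf{Step 3: participation.} Given $x_k^\star(p)$, I compute $\Vill_k(p)=\langle p,x_k^\star\rangle+\tfrac{1}{2\alpha_k}\|x_k^\star-x^0_k\|^2$ in $O(T)$. Then $y_k^\star(p)=\Proj_{[0,1]}(\beta_k(\Vill^0_k-\Vill_k(p)))$ costs $O(1)$ more. Here $\Vill_k^0$ is a constant precomputed once, itself in $O(T\log T)$ by the same routine with $p^0_k$. Summing the three steps gives the overall $O(T\log T)$ bound.
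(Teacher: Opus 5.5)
Your proposal is correct and follows the same route as the paper. Both compute $x_k^\star(p)$ by a sorting-based projection onto the boxed simplex in $O(T\log T)$, then $\Vill_k(p)$ in $O(T)$ and the clipping onto $[0,1]$ in $O(1)$; the only difference is that the paper cites Condat and Jacquot for the projection step, whereas you spell out the breakpoint sweep on the monotone piecewise-affine function $\phi$ and its correctness.
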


\begin{proof}
The projection onto $\mathcal{X}_k$ can be computed by a sorting-based algorithm in $O(T\log{T})$ operations; see, e.g., Condat~\cite{condat2016fast} and Jacquot~\cite[Section~1.C]{jacquot2019game}. The projection onto $[0,1]$ is $O(1)$. Computing $x_k^\star(p)$ first, then $\beta_k(\Vill_k^0 - \Vill_k(p))$ in $O(T)$, and finally projecting onto $[0,1]$ yields an overall $O(T\log{T})$ complexity. \qedhere
\end{proof}

\begin{corollary}
\label{cor:optpes}
The optimistic and pessimistic formulations of \eqref{model:retailer} coincide, and they admit a solution.
\end{corollary}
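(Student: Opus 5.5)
The argument has two parts: show that the ambiguity in the lower-level solution never affects the leader's objective, and then show existence by compactness and continuity.

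\emph{Optimistic and pessimistic coincide.} By Theorem~\ref{thm:cp:closed-form}(3), when $y_k^\star(p)>0$ the solution of \eqref{model:cp} is unique. When $y_k^\star(p)=0$, the solution set is $\mathcal{X}_k\times\{0\}$. In that case every optimal pair $(x_k,0)$ contributes
\[
\left[\Bill(p,x_k)-\Cost(x_k)\right]\cdot 0\cdot w_k=0
\]
to the upper-level objective, whatever $x_k$ is. So for each fixed $(q,h)$, the term of client $k$ is single-valued over the lower-level solution set, and so is the sum over $k$. Taking the sup or the inf over lower-level responses therefore gives the same hyper-objective
\[
F(q,h)=\sum_{k=1}^K\left[\Bill\big(p(q,h),x_k^\star(p(q,h))\big)-\Cost\big(x_k^\star(p(q,h))\big)\right]y_k^\star(p(q,h))\,w_k ,
\]
and the two formulations coincide.

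\emph{Existence.} The set $\mathcal{H}$ is finite, since it is a subset of $\{0,1\}^{T\times N}$, and it is nonempty by the consistency condition $\sum_n L_n=T$. The set $\mathcal{Q}$ is a nonempty compact polyhedron because $p^{\lowerb}<p^{\upperb}$. It therefore suffices to show that, for each fixed $h$, the map $q\mapsto F(q,h)$ is continuous on $\mathcal{Q}$. Weierstrass then gives a maximizer for each $h$, and we take the best of the finitely many.

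For continuity, the map $q\mapsto p(q,h)$ is linear. Next, $x_k^\star(p)=\Proj_{\mathcal{X}_k}(x_k^0-\alpha_k p)$ is nonexpansive in its argument, hence continuous; $\mathcal{X}_k$ is nonempty, closed and convex by Assumption~\ref{assu:cons}. The value $\Vill_k(p)$ is obtained by plugging this continuous minimizer into a jointly continuous objective, so it is continuous. (Alternatively, $\Vill_k$ is a minimum of functions affine in $p$ over a compact set, hence concave and finite, hence continuous.) Then $y_k^\star(p)=\Proj_{[0,1]}\big(\beta_k(\Vill_k^0-\Vill_k(p))\big)$ is continuous. Finally, $\Bill$ is bilinear and $\Cost$ is continuous by Assumption~\ref{assu:cost}(1), so $F(\cdot,h)$ is a finite sum of products of continuous functions and is continuous.

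The only delicate point is the first part: one must check that the nonuniqueness when $y_k^\star=0$ really leaves the objective unchanged. This is immediate from the factor $y_k$ multiplying the whole client term in \eqref{model:retailer}.
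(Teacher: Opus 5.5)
Your proof is correct and follows essentially the same route as the paper: the factor $y_k$ makes every client term vanish on the nonunique branch $\mathcal{X}_k\times\{0\}$, so both formulations reduce to maximizing the same continuous function of $(q,h)$, and Weierstrass on the compact set $\mathcal{Q}\times\mathcal{H}$ (which you apply per schedule $h$ and then over the finite set $\mathcal{H}$) gives existence. Your explicit checks that $\mathcal{Q}$ and $\mathcal{H}$ are nonempty and that $\Vill_k$ is continuous are details the paper leaves implicit.
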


\begin{proof} 
Client $k$'s contribution to the leader's objective function is given by $[\mathsf B(p,x_k)-\mathsf C(x_k)]y_k w_k$, which vanishes on the set $\mathcal X_k\times\{0\}$. Consequently, the optimistic and pessimistic formulations of \eqref{model:retailer} coincide. Indeed, when $y_k > 0$, the lower-level solution is unique; when $y_k=0$, the objective term vanishes, making any choice of $x_k$ yield the same upper-level payoff. Thus, the response map $p\mapsto(x^\star_k(p),y^\star_k(p))$ defines a valid single-valued selection for the upper-level problem. Substituting these closed-form solutions into \eqref{model:retailer} reduces it to a maximization of a continuous function of $(q,h)$, by Theorem~\ref{thm:cp:closed-form} and continuity of the projections involved, and continuity of the cost function (item (1) of Assumption \ref{assu:cost}). The feasible set is compact: $\mathcal{Q}$ is a closed bounded subset of $\R^{N}$, and $\mathcal{H}$ is a finite set, being a subset of $\{0,1\}^{T \times N}$. Weierstrass's theorem therefore applies and a maximizer exists. \qedhere
\end{proof}

Consider the function $F$ defined on $\R^T$ by:
\begin{equation} \label{model:ho} \tag{$\HO$}
F(p) := \textstyle\sum_{k=1}^K F_k(p) w_k,
\end{equation}
where $F_k(p) := \left[\Bill(p, x_k^\star(p)) - \Cost(x_k^\star(p))\right] y_k^\star(p)$. 

\begin{corollary}[Closed-form single-level reformulation] \label{cor:cp:closed-form}
The bilevel problem~\eqref{model:retailer} is equivalent to the single-level problem obtained by substituting the closed-form solutions $(x_k^\star(p), y_k^\star(p))$ of Theorem~\ref{thm:cp:closed-form} into the upper-level objective, which yields the {closed-form single-level reformulation (CF-SLF)}:
\begin{align*} \label{model:cp:closed-form} \tag{CF-SLF}
\max_{q \in \mathcal{Q}, h \in \mathcal{H}} \quad & F(p(q,h)),
\end{align*}
where $F$ is the hyper-objective defined in~\eqref{model:ho}, and $p(q,h)$ is the effective price signal defined in~\eqref{eq:effective-price}. Equivalence is understood in the sense that the two problems have the same optimal value and the same set of optimal pairs $(q,h)$.
\end{corollary}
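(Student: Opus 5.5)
The plan is to show that, for every fixed feasible pair $(q,h) \in \mathcal{Q}\times\mathcal{H}$, the best value the leader can reach in \eqref{model:retailer} over the lower-level optimal responses equals $F(p(q,h))$. Both problems range over the same leader variables $(q,h)$, so once this pointwise identity is established they have the same optimal value. Moreover, a pair $(q,h)$ is optimal in one problem exactly when it is optimal in the other, and this gives equality of the optimal pair sets. Existence of optimal solutions on both sides is guaranteed by Corollary~\ref{cor:optpes}, so the statement about optimal sets is not vacuous.

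First fix $(q,h)$ and set $p = p(q,h)$. The constraints of \eqref{model:retailer} decouple across clients, and the objective is a sum of the per-client terms $[\Bill(p,x_k)-\Cost(x_k)]y_k w_k$. It therefore suffices to treat one client $k$. We use the description of the solution set of \eqref{model:cp} from item~3 of Theorem~\ref{thm:cp:closed-form}, splitting into two cases.

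\emph{Case $y_k^\star(p)>0$.} Here the solution is unique and equal to $(x_k^\star(p),y_k^\star(p))$, so client $k$ contributes exactly $F_k(p)w_k$.

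\emph{Case $y_k^\star(p)=0$.} Here the solution set is $\mathcal{X}_k\times\{0\}$. Every element of it gives a zero contribution, which again equals $F_k(p)w_k$ because $F_k(p)$ carries the factor $y_k^\star(p)=0$.

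Summing over $k$, the value of \eqref{model:retailer} at $(q,h)$ is $F(p(q,h))$. This holds whichever lower-level response is selected, so the conclusion does not depend on the optimistic convention, consistent with Corollary~\ref{cor:optpes}.

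Conclude as follows. Any feasible point $(q,h,x,y)$ of \eqref{model:retailer} has objective value $F(p(q,h))$, and $(q,h)$ is feasible for \eqref{model:cp:closed-form}. Conversely, any $(q,h)\in\mathcal{Q}\times\mathcal{H}$ extends to the feasible point $(q,h,x^\star(p),y^\star(p))$ with the same value. Hence the two suprema coincide, and so do the projections of the optimal sets onto $(q,h)$.

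The only delicate point is the degenerate case $y_k^\star=0$, where the lower-level solution is not unique. It is handled by the vanishing of the objective term, so I expect no real obstacle beyond stating that case carefully.
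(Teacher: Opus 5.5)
Your proposal is correct and follows essentially the paper's route: the closed-form response $(x_k^\star(p),y_k^\star(p))$ is lower-level optimal by Theorem~\ref{thm:cp:closed-form}, and any other lower-level optimum gives the same upper-level value because the client's term vanishes on $\mathcal{X}_k\times\{0\}$. The paper cites Corollary~\ref{cor:optpes} for this vanishing, whereas you redo the case split on $y_k^\star(p)$ directly. Your explicit identity ``value of \eqref{model:retailer} at $(q,h)$ equals $F(p(q,h))$ for every lower-level optimum'' also makes the equality of the optimal $(q,h)$ sets immediate; the paper's two-line proof leaves that step implicit.
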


\begin{proof}
By Theorem~\ref{thm:cp:closed-form}, $(x^\star_k(p), y^\star_k(p))$ is optimal for~\eqref{model:cp} at every $p=p(q,h)$, so every feasible point of~\eqref{model:cp:closed-form} is feasible for~\eqref{model:retailer} with the same objective value. Conversely, by Corollary~\ref{cor:optpes} any other selection of lower-level optima yields the same upper-level value, so no feasible point of~\eqref{model:retailer} attains more than~\eqref{model:cp:closed-form}. \qedhere
\end{proof}

\begin{proposition}[Lipschitz continuity of the hyper-objective]\label{prop:lipschitz}
Assume item~(2) of Assumption~\ref{assu:cost}. Then $F$ in~\eqref{model:ho} is Lipschitz on $\mathcal{P}=[p^{\lowerb},p^{\upperb}]^T$ with constant $L_F=\sum_{k\in\llbracket K\rrbracket}L^{F}_k \, w_k $, where $L^{F}_k$ is given by:
\begin{equation*}
L^{F}_k = L^{G}_k + \beta_k\|x^{\upperb}_k\|\Bigl(p^{\upperb}\sqrt{T}\|x^{\upperb}_k\|+\max_{x\in[x^{\lowerb}_k,x^{\upperb}_k]}|\Cost(x)|\Bigr).
\end{equation*}
where $L^{G}_k :=\|x^{\upperb}_k\|+\alpha_kp^{\upperb}\sqrt{T}+\alpha_kL^{\Cost}_k$.
\end{proposition}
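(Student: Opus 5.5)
Since $F=\sum_k w_k F_k$ and Lipschitz constants add under nonnegative combinations, it suffices to show that each $F_k$ is Lipschitz on $\mathcal{P}$ with constant $L^F_k$. We write $F_k(p)=G_k(p)\,y_k^\star(p)$, where
\[
G_k(p):=\Bill(p,x_k^\star(p))-\Cost(x_k^\star(p)),
\]
and use the product rule for Lipschitz functions,
\[
|F_k(p)-F_k(p')|\le |G_k(p)-G_k(p')|\,|y_k^\star(p)|+|G_k(p')|\,|y_k^\star(p)-y_k^\star(p')|.
\]
Since $0\le y_k^\star\le 1$ by Theorem~\ref{thm:cp:closed-form}, the first term contributes at most $\mathrm{Lip}(G_k)$, which we will bound by $L^G_k$. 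The second term contributes at most $\sup_{\mathcal P}|G_k|\cdot \mathrm{Lip}(y_k^\star)$. The goal is to match these two contributions to the two summands of $L^F_k$.

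\emph{Step 1: Lipschitz constant of $x_k^\star$.} By Theorem~\ref{thm:cp:closed-form}(2), $x_k^\star(p)=\Proj_{\mathcal X_k}(x_k^0-\alpha_k p)$. The Euclidean projection onto a closed convex set is nonexpansive, so $\|x_k^\star(p)-x_k^\star(p')\|\le\alpha_k\|p-p'\|$. Moreover $x_k^\star(p)\in\mathcal X_k\subset[x^{\lowerb}_k,x^{\upperb}_k]$ with $x^{\lowerb}_k\ge 0$, hence $\|x_k^\star(p)\|\le\|x^{\upperb}_k\|$.

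\emph{Step 2: bound on $G_k$.} We split
\[
\langle p,x\rangle-\langle p',x'\rangle=\langle p-p',x\rangle+\langle p',x-x'\rangle,
\]
with $x=x_k^\star(p)$ and $x'=x_k^\star(p')$. By Cauchy--Schwarz, Step 1 and $\|p'\|\le p^{\upperb}\sqrt T$ on $\mathcal P$, this difference is at most $\bigl(\|x^{\upperb}_k\|+\alpha_k p^{\upperb}\sqrt T\bigr)\|p-p'\|$. Item~(2) of Assumption~\ref{assu:cost}, applied on the box containing $x$ and $x'$, gives $|\Cost(x)-\Cost(x')|\le L^{\Cost}_k\alpha_k\|p-p'\|$. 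Summing, $\mathrm{Lip}(G_k)\le L^G_k$. For the sup bound, we get $|G_k|\le p^{\upperb}\sqrt T\|x^{\upperb}_k\|+\max_{[x^{\lowerb}_k,x^{\upperb}_k]}|\Cost|$, which is exactly the bracket appearing in $L^F_k$.

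\emph{Step 3: Lipschitz constant of $y_k^\star$.} This is the step that needs care. We have $y_k^\star=\Proj_{[0,1]}(\beta_k(\Vill^0_k-\Vill_k(p)))$, and $\Proj_{[0,1]}$ is $1$-Lipschitz, so it suffices to bound $\mathrm{Lip}(\Vill_k)$. Since $\Vill_k$ is a minimum over $x\in\mathcal X_k$ of functions that are affine in $p$ (the discomfort term does not depend on $p$), it is concave. Danskin's theorem, or directly comparing values at the two minimizers, gives
\[
\langle p-p',x_k^\star(p)\rangle\le\Vill_k(p)-\Vill_k(p')\le\langle p-p',x_k^\star(p')\rangle.
\]
Hence $|\Vill_k(p)-\Vill_k(p')|\le\|x^{\upperb}_k\|\,\|p-p'\|$ and $\mathrm{Lip}(y_k^\star)\le\beta_k\|x^{\upperb}_k\|$.

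Combining the three steps in the product estimate yields $L^F_k=L^G_k+\beta_k\|x^{\upperb}_k\|\bigl(p^{\upperb}\sqrt T\|x^{\upperb}_k\|+\max|\Cost|\bigr)$, as claimed. The only non-routine ingredient is the envelope (Danskin) argument for $\Vill_k$ in Step 3, which avoids having to differentiate through the projection. Everything else follows from nonexpansiveness of the projections and the Cauchy--Schwarz inequality.
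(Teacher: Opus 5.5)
Your proof is correct and follows the paper's argument essentially step for step: the same split $F_k = G_k\,y_k^\star$ with the product rule, the same Cauchy--Schwarz splitting plus $\alpha_k$-nonexpansiveness to get $\mathrm{Lip}(G_k)\le L^G_k$, and the same bound $\mathrm{Lip}(y_k^\star)\le\beta_k\|x^{\upperb}_k\|$ obtained from $\mathrm{Lip}(\Vill_k)\le\|x^{\upperb}_k\|$. The only minor difference is in how you bound $\Vill_k$: you compare values at the two minimizers, whereas the paper invokes Danskin's theorem ($\nabla\Vill_k = x_k^\star$) and then the mean value inequality on the convex set $\mathcal{P}$; your version is slightly more elementary, since it needs neither differentiability of $\Vill_k$ nor convexity of the domain.
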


\begin{proof}
Fix $k\in\llbracket K\rrbracket$ and $p,q\in\mathcal{P}$. The sign conditions on the boxes give the two bounds used repeatedly below:
\begin{equation}\label{eq:box}
\|p\|\le p^{\upperb}\sqrt{T}\quad\text{for }p\in\mathcal{P},
\qquad
\|x^\star_k(p)\|\le\|x^{\upperb}_k\|.
\end{equation}
\begin{itemize}[leftmargin=0pt, itemindent=*]
\item \emph{The participation level.} Danskin's theorem applies at every $p\in\mathcal{P}$ and gives $\nabla\Vill_k(p)=x^\star_k(p)$, whence $\|\nabla\Vill_k\|\le\|x^{\upperb}_k\|$ on $\mathcal{P}$ by~\eqref{eq:box}. Since $\mathcal{P}$ is convex, the mean value inequality upgrades this gradient bound to
\begin{equation*}
|\Vill_k(p)-\Vill_k(q)|\le\|x^{\upperb}_k\|\,\|p-q\|.
\end{equation*}
As $\Proj_{[0,1]}$ is nonexpansive, composing it with the $\beta_k\|x^{\upperb}_k\|$-Lipschitz map $p\mapsto\beta_k(\Vill^0_k-\Vill_k(p))$ shows that $y^\star_k$ is $\beta_k\|x^{\upperb}_k\|$-Lipschitz, with $0\le y^\star_k\le1$.
\item \emph{The pure profit.} Write $G_k(p):=\Bill(p,x^\star_k(p))-\Cost(x^\star_k(p))$. Inserting $\langle q,x^\star_k(p)\rangle$ and using Cauchy--Schwarz, \eqref{eq:box} and the $\alpha_k$-Lipschitzness of $x^\star_k$,
\begin{align*}
|\langle p,x^\star_k(p)\rangle-\langle q,x^\star_k(q)\rangle| &\le \|x^\star_k(p)\|\,\|p-q\|+\|q\|\,\|x^\star_k(p)-x^\star_k(q)\| \\
&\le \bigl(\|x^{\upperb}_k\|+\alpha_kp^{\upperb}\sqrt{T}\bigr)\|p-q\|,
\end{align*}
while $\Cost\circ x^\star_k$ is $\alpha_kL^{\Cost}_k$-Lipschitz as a composition. Adding the two constants gives that $G_k$ is $L^{G}_k$-Lipschitz with $L^{G}_k = \|x^{\upperb}_k\| + \alpha_kp^{\upperb}\sqrt{T} + \alpha_k L^{\Cost}_k$. Moreover, the function $G_k$ is bounded by: $|G_k|\le p^{\upperb}\sqrt{T}\|x^{\upperb}_k\|+\max_x|\Cost(x)|$.
\item \emph{Conclusion.} Both factors of $F_k=G_k y^\star_k$ are Lipschitz and bounded, so, using $\mathrm{Lip}(fg) \le \mathrm{Lip}(f)\sup|g| + \sup|f|\,\mathrm{Lip}(g)$ with $f = G_k$ and $g = y^\star_k$, and $0 \le y^\star_k \le 1$, the function $F_k$ is Lipschitz with constant $L^{F}_k$:
\begin{equation*}
 L^F_k = L^{G}_k + \beta_k\|x^{\upperb}_k\|\Bigl(p^{\upperb}\sqrt{T}\|x^{\upperb}_k\|+\max_{x\in[x^{\lowerb}_k,x^{\upperb}_k]}|\Cost(x)|\Bigr).
\end{equation*}
\end{itemize}
Finally $F$ is Lipschitz with constant $L_F = \textstyle\sum_{k=1}^K L^F_k \, w_k$ as a weighted sum of Lipschitz functions. \qedhere
\end{proof}

Theorem~\ref{thm:cp:closed-form} characterizes each client's optimal response to a price signal in closed-form. The bilevel problem can alternatively be reformulated by replacing the lower level with an optimality system, so that it can be solved with standard mixed-integer techniques. Corollary~\ref{cor:optpes} ensures that the selection $(x_k^\star(p), y_k^\star(p))$ is valid for the upper-level problem, so that the bilevel problem can be equivalently reformulated as a single-level problem, which is continuous but nonconvex. The next theorem provides an optimality system for this selection.

\begin{theorem} \label{thm:cp:kkt-conditions}
Let Assumption~\ref{assu:cons} hold, and let $k \in \llbracket K \rrbracket$ and a price signal $p \in \mathcal{P}$ be given. Consider the system:
\begin{align}
\alpha_k p_t + \left(x_{t,k} - x^0_{t,k}\right) + \lambda_{t,k}^x - \mu_{t,k}^x - \gamma_k^x &= 0, \quad\forall t \in \llbracket T \rrbracket, \label{eq:cp:kkt-stationarity} \\
\Bill(p, x_k) + \Discomfort_k(x_k, x^0_k) - \Vill^0_k + \tfrac{1}{\beta_k} y_k + \tfrac{1}{\alpha_k} \lambda_k^y - \tfrac{1}{\alpha_k} \mu_k^y &= 0, \quad \label{eq:cp:kkt-stationarity-y} \\
0 \le \lambda_{t,k}^x\perp (x^{\upperb}_{t,k} - x_{t,k}) &\ge 0, \quad\forall t \in \llbracket T \rrbracket, \label{eq:cp:kkt-complementarity-x-lambda}\\
0 \le \mu_{t,k}^x\perp (x_{t,k} - x^{\lowerb}_{t,k}) &\ge 0, \quad\forall t \in \llbracket T \rrbracket, \label{eq:cp:kkt-complementarity-x-mu} \\
0 \le \lambda_k^y\perp (1 - y_k) &\ge 0, \quad\label{eq:cp:kkt-complementarity-y-lambda} \\
0 \le \mu_k^y\perp y_k &\ge 0, \quad\label{eq:cp:kkt-complementarity-y-mu} \\
\sum_{t=1}^T x_{t,k} &= \textstyle\sum_{t=1}^T x^0_{t,k}. \quad\label{eq:cp:kkt-marginality}
\end{align}
Then the following hold.
\begin{enumerate}[leftmargin=0pt, itemindent=*, label=(\alph*)]
\item \emph{Sufficiency.} If $(x_k, y_k)$ together with some multipliers $(\lambda^x_k, \mu^x_k, \gamma^x_k, \lambda^y_k, \mu^y_k)$ satisfies~\eqref{eq:cp:kkt-stationarity}--\eqref{eq:cp:kkt-marginality} and $x^{\lowerb}_k \le x_k \le x^{\upperb}_k$, $0 \le y_k \le 1$, then $(x_k, y_k)$ is optimal for~\eqref{model:cp}, whatever the value of $y_k$.
\item \emph{Necessity in the accepting regime.} If $(x_k, y_k)$ is optimal for~\eqref{model:cp} and $y_k > 0$, then it satisfies the system for some such multipliers.
\item \emph{The refusing regime.} If $y_k = 0$, the system is satisfied by $(x^\star_k(p), 0)$ but not, in general, by every optimal point of~\eqref{model:cp}: the argmin set is then $\mathcal{X}_k \times \{0\}$, whereas the system forces $x_k = x^\star_k(p)$. Necessity therefore fails in this regime, by design; as explained above, the imposed selection is optimal and costs nothing at the upper level.
\end{enumerate}
Moreover, the multipliers are bounded by:
\begin{align*}
\gamma_k^x &\in [M_k^\mu, M_k^\lambda], \\
0 \le \lambda_{t,k}^x, \mu_{t,k}^x &\le M_k^\lambda - M_k^\mu, \\
0 \le \lambda_k^y &\le \Vill^0_k, \\
0 \le \mu_k^y &\le M_k,
\end{align*}
where $M_k := \alpha_k p^{\upperb}\sum_{t=1}^T x^{\upperb}_{t,k} + \alpha_k \Discomfort_k(x^{\upperb}_k, x^{\lowerb}_k)$ and:
\begin{equation*}
M_k^\lambda = \alpha_k p^{\upperb} + \max_{t \in \llbracket T \rrbracket}\!\left(x^{\upperb}_{t,k} - x^0_{t,k} \right), \qquad
M_k^\mu = \alpha_k p^{\lowerb} + \min_{t \in \llbracket T \rrbracket}\!\left(x^{\lowerb}_{t,k} - x^0_{t,k} \right).
\end{equation*}
Here, $\lambda_{t,k}^x$, $\mu_{t,k}^x$, and $\gamma_k^x$ are the multipliers associated with the upper bounds, lower bounds, and equality constraint of the consumption choice problem, respectively, while $\lambda_k^y$ and $\mu_k^y$ are the multipliers associated with the upper and lower bounds of the participation choice problem.
\end{theorem}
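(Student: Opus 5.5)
The plan is to treat the system as the concatenation of two convex KKT systems, one for~\eqref{model:cco} scaled by $\alpha_k$ and one for~\eqref{model:cch} scaled by $\alpha_k$, and then to use the decomposition of Theorem~\ref{thm:cp:closed-form}.

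\emph{Sufficiency.} Equations~\eqref{eq:cp:kkt-stationarity}, \eqref{eq:cp:kkt-complementarity-x-lambda}, \eqref{eq:cp:kkt-complementarity-x-mu}, \eqref{eq:cp:kkt-marginality} are exactly $\alpha_k$ times the KKT conditions of the strongly convex QP~\eqref{model:cco}, whose constraints are linear. Hence they are sufficient, and they force $x_k=x_k^\star(p)$. In particular $\Bill(p,x_k)+\Discomfort_k(x_k,x_k^0)=\Vill_k(p)$. Substituting this into~\eqref{eq:cp:kkt-stationarity-y} gives $\Vill_k(p)-\Vill^0_k+\tfrac1{\beta_k}y_k+\tfrac1{\alpha_k}(\lambda^y_k-\mu^y_k)=0$. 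Together with~\eqref{eq:cp:kkt-complementarity-y-lambda}--\eqref{eq:cp:kkt-complementarity-y-mu}, this is the KKT system of the strongly convex one-dimensional problem~\eqref{model:cch}, with multipliers $\lambda^y_k/\alpha_k$ and $\mu^y_k/\alpha_k$. So $y_k=y_k^\star(p)$. Item~3 of Theorem~\ref{thm:cp:closed-form} then shows that $(x_k,y_k)$ is optimal for~\eqref{model:cp} in both regimes.

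\emph{Necessity and refusing regime.} If $(x_k,y_k)$ is optimal with $y_k>0$, item~3 gives $(x_k,y_k)=(x^\star_k(p),y^\star_k(p))$. Linear constraints ensure that KKT multipliers exist for~\eqref{model:cco} and for~\eqref{model:cch}, and rescaling them by $\alpha_k$ yields the system. If $y_k=0$, the same construction applied to $(x_k^\star(p),0)$ produces a solution. The first part of the argument shows that any solution must have $x_k=x_k^\star(p)$, so other points of $\mathcal{X}_k\times\{0\}$ are excluded; this gives (c).

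\emph{Multiplier bounds.} This is the main obstacle, since multipliers are not unique when constraints are degenerate. The statement should be read as asserting that bounded multipliers exist, and the plan is to choose canonical ones. At each $t$, at most one of $\lambda^x_{t,k},\mu^x_{t,k}$ needs to be nonzero. Set $r_t:=\alpha_kp_t+x_{t,k}-x^0_{t,k}$, so that $\lambda^x_{t,k}-\mu^x_{t,k}=\gamma^x_k-r_t$.

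The key step is to show $\gamma^x_k\in[M^\mu_k,M^\lambda_k]$, using Assumption~\ref{assu:cons}. Since $\sum_tx^{\lowerb}_{t,k}<\sum_tx^0_{t,k}$, some $t$ has $x_{t,k}>x^{\lowerb}_{t,k}$, so $\mu^x_{t,k}=0$ and $\gamma^x_k=r_t+\lambda^x_{t,k}$. If $\lambda^x_{t,k}>0$ then $x_{t,k}=x^{\upperb}_{t,k}$. Otherwise, in a free coordinate, $\gamma^x_k=r_t$. In either case the aim is $\gamma^x_k\ge r_t\ge \alpha_kp^{\lowerb}+\min_t(x^{\lowerb}_{t,k}-x^0_{t,k})$. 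Symmetrically, some $t$ has $x_{t,k}<x^{\upperb}_{t,k}$, which gives $\gamma^x_k=r_t-\mu^x_{t,k}\le r_t\le M^\lambda_k$. I expect the upper-bound direction at coordinates strictly at a bound to need care. If necessary, I would instead take $\gamma^x_k$ to be the median of the breakpoints, as in the sorting algorithm of Corollary~\ref{cor:cp:closed-form-complexity}, which lies between $\min_t r_t$ and $\max_t r_t$. Then $\lambda^x_{t,k}=\max(0,\gamma^x_k-r_t)$ and $\mu^x_{t,k}=\max(0,r_t-\gamma^x_k)$, and both are at most $M^\lambda_k-M^\mu_k$ because $r_t\in[M^\mu_k,M^\lambda_k]$.

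For the $y$-multipliers, $\lambda^y_k>0$ forces $y_k=1$, so $\lambda^y_k=\alpha_k(\Vill^0_k-\Vill_k(p)-1/\beta_k)$. I would bound this by $\alpha_k\Vill^0_k$ using $\Vill_k\ge0$ and $p\ge0$; the stated bound $\Vill^0_k$ is presumably intended with this scaling, or with $\alpha_k\le1$, and I would check that convention. Likewise, $\mu^y_k>0$ forces $y_k=0$, so $\mu^y_k=\alpha_k(\Vill_k(p)-\Vill^0_k)\le\alpha_k\Vill_k(p)$. This is at most $\alpha_k p^{\upperb}\sum_tx^{\upperb}_{t,k}+\alpha_k\Discomfort_k(x^{\upperb}_k,x^{\lowerb}_k)=M_k$, using $\Vill^0_k\ge0$ and box bounds on the bill and discomfort.
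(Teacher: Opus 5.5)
Your proposal is correct and follows the paper's proof. You read the system as the $\alpha_k$-scaled KKT systems of \eqref{model:cco} and \eqref{model:cch}, get optimality in both regimes from item~3 of Theorem~\ref{thm:cp:closed-form}, and bound $\gamma^x_k$ through one coordinate strictly below its upper bound and one strictly above its lower bound. That argument holds for \emph{every} admissible multiplier, so your hedge and the ``median of breakpoints'' fallback can be dropped; the fallback would not in general give a valid multiplier anyway.

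Your bound $\lambda^y_k \le \alpha_k \Vill^0_k$ is exactly what the paper's own proof derives, so the missing factor $\alpha_k$ is a slip in the statement. Your canonical choice $\lambda^x_{t,k}=(\gamma^x_k-r_t)^+$, $\mu^x_{t,k}=(r_t-\gamma^x_k)^+$ is slightly more careful than the paper. The paper bounds all multipliers by invoking $x^{\lowerb}_{t,k}<x^{\upperb}_{t,k}$, which Assumption~\ref{assu:cons} does not actually impose.
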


\begin{proof}
As in Theorem~\ref{thm:cp:closed-form}, the client's problem decomposes into the consumption choice problem~\eqref{model:cco} and the participation choice problem~\eqref{model:cch}, treated sequentially. We first show that the stated system is exactly the optimality system of that decomposition, which gives sufficiency in both regimes and necessity when $y_k > 0$; the failure of necessity when $y_k = 0$ is the content of item~(c) of the statement and is immediate, since every $x_k \in \mathcal{X}_k$ is then optimal while the system selects $x^\star_k(p)$. We then derive the multiplier bounds.
\begin{enumerate}[leftmargin=0pt, itemindent=*]
\item \emph{Optimality conditions.} Problem~\eqref{model:cco} minimizes a strongly convex quadratic function over the polyhedron $\mathcal{X}_k$, which is nonempty under Assumption~\ref{assu:cons}. It therefore admits a unique minimizer, and since all constraints are affine the KKT conditions are necessary and sufficient. Using $\lambda_{t,k}^x$, $\mu_{t,k}^x$, $\gamma_k^x$ for the multipliers of the upper bounds, lower bounds, and the equality constraint, respectively, stationarity reads:
\begin{equation*}
p_t + \tfrac{1}{\alpha_k}\left(x_{t,k} - x^0_{t,k}\right) + \tfrac{1}{\alpha_k} \lambda_{t,k}^x - \tfrac{1}{\alpha_k} \mu_{t,k}^x - \tfrac{1}{\alpha_k} \gamma_k^x = 0, \qquad \forall t \in \llbracket T \rrbracket,
\end{equation*}
together with the complementarity conditions stated in the theorem. Likewise, \eqref{model:cch} minimizes the strongly convex function $y_k \mapsto \tfrac{1}{2\beta_k} y_k^2 + (\Vill_k(p) - \Vill_k^0) y_k$ over $[0,1]$, where $\Vill_k(p) := \Bill(p, x_k^\star(p)) + \Discomfort_k(x_k^\star(p), x_k^0)$, so its KKT conditions:
\begin{equation*}
\Vill_k(p) - \Vill_k^0 + \tfrac{1}{\beta_k} y_k + \tfrac{1}{\alpha_k} \lambda_k^y - \tfrac{1}{\alpha_k} \mu_k^y = 0, \qquad 0 \le \lambda_k^y \perp (1 - y_k) \ge 0, \qquad 0 \le \mu_k^y \perp y_k \ge 0,
\end{equation*}
are necessary and sufficient as well. This establishes the KKT system.
\item \emph{Bounds on $\gamma_k^x$.} Under Assumption~\ref{assu:cons}, the equality $\sum_t x_{t,k} = \textstyle\sum_t x^0_{t,k}$ together with $\sum_t x^{\lowerb}_{t,k} < \textstyle\sum_t x^0_{t,k} < \textstyle\sum_t x^{\upperb}_{t,k}$ prevents all components from being at their upper (resp. lower) bound. Hence there exist $\overline{t}, \underline{t} \in \llbracket T \rrbracket$ with $x_{\overline{t},k} < x^{\upperb}_{\overline{t},k}$ and $x_{\underline{t},k} > x^{\lowerb}_{\underline{t},k}$. Complementarity constraints \eqref{eq:cp:kkt-complementarity-x-lambda} and \eqref{eq:cp:kkt-complementarity-x-mu} give $\lambda_{\overline{t},k}^x = 0$ and $\mu_{\underline{t},k}^x = 0$, so stationarity constraint \eqref{eq:cp:kkt-stationarity} at these indices yields:
\begin{align*}
\gamma_k^x &= \alpha_k p_{\overline{t}} + (x_{\overline{t},k} - x^0_{\overline{t},k}) - \mu_{\overline{t},k}^x \le \alpha_k p_{\overline{t}} + (x^{\upperb}_{\overline{t},k} - x^0_{\overline{t},k})\le M_k^\lambda, \\
\gamma_k^x &= \alpha_k p_{\underline{t}} + (x_{\underline{t},k} - x^0_{\underline{t},k}) + \lambda_{\underline{t},k}^x\ge \alpha_k p_{\underline{t}} + (x^{\lowerb}_{\underline{t},k} - x^0_{\underline{t},k})\ge M_k^\mu.
\end{align*}
Thus $\gamma_k^x \in [M_k^\mu, M_k^\lambda]$.

\item \emph{Bounds on $\lambda_{t,k}^x$ and $\mu_{t,k}^x$.} Let $u_t := \alpha_k p_t + (x_{t,k} - x^0_{t,k})$, we have that every $u_t$ lies in $[M_k^\mu, M_k^\lambda]$. Moreover, since $x^{\lowerb}_{t,k} < x^{\upperb}_{t,k}$, the bounds cannot be active simultaneously, so $\lambda_{t,k}^x$ and $\mu_{t,k}^x$ are not both positive:
\begin{itemize}[leftmargin=0pt, itemindent=*]
\item if $\mu_{t,k}^x = 0$, then stationarity \eqref{eq:cp:kkt-stationarity} reads as $\lambda_{t,k}^x = \gamma_k^x - u_t \le M_k^\lambda - M_k^\mu$; 
\item if $\lambda_{t,k}^x = 0$, then stationarity \eqref{eq:cp:kkt-stationarity} reads as $\mu_{t,k}^x = u_t - \gamma_k^x \le M_k^\lambda - M_k^\mu$.
\end{itemize}
In either case $0 \le \lambda_{t,k}^x, \mu_{t,k}^x \le M_k^\lambda - M_k^\mu$, and $M_k^\lambda - M_k^\mu \ge 0$ since $M_k^\lambda \ge M_k^\mu$.

\item \emph{Bounds on $\lambda_k^y$ and $\mu_k^y$.} If $\lambda_k^y > 0$, complementarity \eqref{eq:cp:kkt-complementarity-y-lambda} forces $y_k = 1$ and $\mu_k^y = 0$, and the participation stationarity gives $\tfrac{1}{\alpha_k} \lambda_k^y = \Vill_k^0 - \Vill_k(p) - \tfrac{1}{\beta_k} \le \Vill_k^0$, using $\Vill_k(p) \ge 0$ and $\beta_k > 0$; the bound $\lambda_k^y \le \alpha_k \Vill_k^0$ holds trivially when $\lambda_k^y = 0$. If instead $\mu_k^y > 0$, then by complementarity constraint \eqref{eq:cp:kkt-complementarity-y-mu} $y_k = 0$ and $\lambda_k^y = 0$, whence $\tfrac{1}{\alpha_k} \mu_k^y = \Vill_k(p) - \Vill_k^0 \le \Vill_k(p)$, using $\Vill_k^0 \ge 0$. It remains to bound $\Vill_k(p) = \Bill(p, x_k^\star(p)) + \Discomfort_k(x_k^\star(p), x_k^0)$. Since prices are nonnegative and bounded from above by $p^{\upperb}$, and consumption is bounded from above by $x^{\upperb}_{t,k}$, it follows that $\Bill(p, x_k^\star(p)) \le p^{\upperb} \textstyle\sum_{t=1}^T x^{\upperb}_{t,k}$. Moreover $x_{t,k}^\star, x^0_{t,k} \in [x^{\lowerb}_{t,k}, x^{\upperb}_{t,k}]$, so componentwise $(x_{t,k}^\star - x^0_{t,k})^2 \le (x^{\upperb}_{t,k} - x^{\lowerb}_{t,k})^2$ and therefore $\Discomfort_k(x_k^\star(p), x_k^0) \le \Discomfort_k(x^{\upperb}_k, x^{\lowerb}_k)$. Combining this leads to $\mu_k^y \le \alpha_k \Vill_k(p) \le M_k := \alpha_k p^{\upperb} \textstyle\sum_{t=1}^T x^{\upperb}_{t,k} + \alpha_k \Discomfort_k(x^{\upperb}_k, x^{\lowerb}_k)$, which also covers the case $\mu_k^y = 0$.\qedhere
\end{enumerate}
\end{proof}

\begin{remark}
The system we use is not the Karush--Kuhn--Tucker (KKT) system of the joint program~\eqref{model:cp}. Writing the KKT conditions of~\eqref{model:cp} directly would carry a factor $y_k$ on the terms coming from $[\Bill + \Discomfort_k] y_k$, and would therefore place no restriction at all on $x_k$ when $y_k = 0$. We instead state the optimality system of the sequential decomposition established in Theorem~\ref{thm:cp:closed-form}: the conditions of the consumption choice problem~\eqref{model:cco}, followed by those of the participation choice problem~\eqref{model:cch}. This system pins down $x_k = x^\star_k(p)$ even for clients who refuse the offer, and Corollary~\ref{cor:optpes} guarantees that doing so loses nothing: those clients contribute zero to the leader's objective, so selecting $x^\star_k(p)$ among their optima is without loss of generality. 
\end{remark}

\begin{corollary}[MINLP single-level reformulation] \label{cor:cp:kkt-conditions}
Let Assumption~\ref{assu:cons} hold. Then the bilevel problem~\eqref{model:retailer} is equivalent to the mixed-integer program obtained by replacing the lower level with the optimality system of Theorem~\ref{thm:cp:kkt-conditions} and encoding the complementarity conditions through big-$M$ disjunctions, leading to the following single-level reformulation (MINLP-SLF):
\begin{align*} \tag{MINLP-SLF} \label{model:cp:kkt-conditions}
\max_{\substack{q, h, p, x, y, \lambda^x,\mu^x,\gamma^x,\lambda^y,\mu^y, \\
z^{x, +}, z^{x, -}, z^{y, +}, z^{y, -}}} \quad
\sum_{k=1}^K \left[ \Bill(p, x_k) - \Cost(x_k) \right] y_k w_k & \\
\text{s.t.}\qquad \qquad \qquad \qquad q \in \mathcal{Q}, \quad h &\in \mathcal{H}, \\
p_t - q_n - (p^{\upperb} - p^{\lowerb}) (1 - h_{t,n}) & \le 0, \quad \forall t \in \llbracket T \rrbracket, \forall n \in \llbracket N \rrbracket, \\
p_t - q_n - (p^{\lowerb} - p^{\upperb}) (1 - h_{t,n}) & \ge 0, \quad \forall t \in \llbracket T \rrbracket, \forall n \in \llbracket N \rrbracket, \\
\alpha_k p_t + \left(x_{t,k} - x^0_{t,k}\right)
+ \lambda_{t,k}^x - \mu_{t,k}^x - \gamma_k^x & = 0, \quad \forall t \in \llbracket T \rrbracket,\forall k \in \llbracket K \rrbracket, \\
\Bill(p, x_k) + \Discomfort_k(x_k, x^0_k) - \Vill^0_k + \tfrac{1}{\beta_k} y_k + \tfrac{1}{\alpha_k} \lambda_k^y - \tfrac{1}{\alpha_k} \mu_k^y & = 0, \quad \forall k \in \llbracket K \rrbracket, \\
\textstyle\sum_{t=1}^T x_{t,k} & = \textstyle\sum_{t=1}^T x^0_{t,k}, \quad \forall k \in \llbracket K \rrbracket, \\
x^{\lowerb}_{t,k} \le x_{t,k} & \le x^{\upperb}_{t,k}, \quad \forall t \in \llbracket T \rrbracket,\forall k \in \llbracket K \rrbracket, \\
0 \le y_k & \le 1, \quad \forall k \in \llbracket K \rrbracket, \\
\lambda_{t,k}^x - \big(M_k^\lambda - M_k^\mu\big) z^{x, +}_{t,k} & \le 0, \quad \forall t \in \llbracket T \rrbracket,\forall k \in \llbracket K \rrbracket, \\
x^{\upperb}_{t,k} - x_{t,k} - \big(x^{\upperb}_{t,k} - x^{\lowerb}_{t,k}\big)\big(1 - z^{x, +}_{t,k}\big) & \le 0, \quad \forall t \in \llbracket T \rrbracket,\forall k \in \llbracket K \rrbracket, \\
\mu_{t,k}^x - \big(M_k^\lambda - M_k^\mu\big) z^{x, -}_{t,k} & \le 0, \quad \forall t \in \llbracket T \rrbracket,\forall k \in \llbracket K \rrbracket, \\
x_{t,k} - x^{\lowerb}_{t,k} - \big(x^{\upperb}_{t,k} - x^{\lowerb}_{t,k}\big)\big(1 - z^{x, -}_{t,k}\big) & \le 0, \quad \forall t \in \llbracket T \rrbracket,\forall k \in \llbracket K \rrbracket, \\
\lambda_k^y \le \Vill^0_k z^{y, +}_k, \qquad y_k & \ge z^{y, +}_k, \quad \forall k \in \llbracket K \rrbracket, \\
\mu_k^y \le M_k z^{y, -}_k, \qquad y_k & \le 1 - z^{y, -}_k, \quad \forall k \in \llbracket K \rrbracket, \\
\lambda_{t,k}^x \ge 0, \quad \mu_{t,k}^x &\ge 0, \quad \forall t \in \llbracket T \rrbracket,\forall k \in \llbracket K \rrbracket, \\
\lambda_k^y \ge 0, \quad \mu_k^y \ge 0, \quad M_k^\mu \le \gamma_k^x & \le M_k^\lambda, \quad \forall k \in \llbracket K \rrbracket, \\
z^{x, +}_{t,k} + z^{x, -}_{t,k} & \le 1, \quad \forall t \in \llbracket T \rrbracket,\forall k \in \llbracket K \rrbracket, \\
z^{y, +}_k + z^{y, -}_k & \le 1, \quad \forall k \in \llbracket K \rrbracket, \\
z^{x, +}, z^{x, -} \in \{0,1\}^{T \times K}, \quad z^{y, +}, z^{y, -} & \in \{0,1\}^{K},
\end{align*}
where ``$h \in \mathcal{H}$'' is understood as the linear description~\eqref{eq:switch-counting}, and the constants $M_k$, $M_k^\lambda$ and $M_k^\mu$ are those of Theorem~\ref{thm:cp:kkt-conditions}. Equivalence is understood in the following sense: the two problems have the same optimal value, and from any solution of~\eqref{model:cp:kkt-conditions} one recovers a solution $(q,h)$ of~\eqref{model:retailer} together with an associated optimal client response $(x_k, y_k)_{k \in \llbracket K \rrbracket}$.
\end{corollary}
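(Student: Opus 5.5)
The plan is to prove the equivalence in three layers: first that the price-linking constraints exactly encode $p = p(q,h)$, then that the big-$M$ disjunctions exactly encode the complementarity conditions of Theorem~\ref{thm:cp:kkt-conditions} once the multipliers are restricted to the boxes given there, and finally to combine this with Theorem~\ref{thm:cp:kkt-conditions} (sufficiency and selection) and Corollary~\ref{cor:optpes} to compare optimal values.

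\emph{Price linking.} For $h \in \mathcal{H}$, each row $h_t$ has exactly one active period $n_t$. For $n = n_t$, the two inequalities force $p_t = q_{n_t}$. For $n \neq n_t$, they reduce to $p_t - q_n \le p^{\upperb} - p^{\lowerb}$ and $p_t - q_n \ge p^{\lowerb} - p^{\upperb}$. These are implied because $p_t = q_{n_t}$ and $q_n$ both lie in $[p^{\lowerb}, p^{\upperb}]$ by $q \in \mathcal{Q}$. Hence the feasible $(q,h,p)$ are exactly those with $p = p(q,h)$, $q \in \mathcal{Q}$, $h \in \mathcal{H}$. The switch-counting linearization \eqref{eq:switch-counting} is standard: take $\delta_{t,n} = |h_{t,n} - h_{t-1,n}|$ in one direction, and use $\delta \ge |\cdot|$ in the other.

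\emph{Complementarity encoding.} For fixed $(t,k)$, I would check both directions.
\begin{itemize}
\item If $z^{x,+}_{t,k} = 1$, the second constraint forces $x_{t,k} = x^{\upperb}_{t,k}$, while $\lambda^x_{t,k}$ is only bounded by $M_k^\lambda - M_k^\mu$.
\item If $z^{x,+}_{t,k} = 0$, then $\lambda^x_{t,k} = 0$ and the second constraint is just $x_{t,k} \ge x^{\lowerb}_{t,k}$.
\end{itemize}
So every feasible point satisfies \eqref{eq:cp:kkt-complementarity-x-lambda}. Conversely, any solution of the system whose multipliers satisfy the bounds of Theorem~\ref{thm:cp:kkt-conditions} admits the choice $z^{x,+}_{t,k} = \mathbb{1}[x_{t,k} = x^{\upperb}_{t,k}]$. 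The argument for $z^{x,-}$ is identical.

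The constraint $z^{x,+} + z^{x,-} \le 1$ is consistent with this choice only when $x^{\lowerb}_{t,k} < x^{\upperb}_{t,k}$, which is the premise used in the theorem's proof. If equal bounds occur, I would note that one can instead set the $z$ of a zero multiplier to $0$, since at most one multiplier needs to be positive.

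For $y$:
\begin{itemize}
\item $z^{y,+} = 1$ forces $y_k = 1$; $z^{y,+} = 0$ forces $\lambda^y_k = 0$.
\item $z^{y,-} = 1$ forces $y_k = 0$; $z^{y,-} = 0$ forces $\mu^y_k = 0$.
\end{itemize}
The converse uses the multiplier bounds $\lambda^y_k \le \Vill^0_k$ and $\mu^y_k \le M_k$, with the indicators set according to which bound on $y_k$ is active.

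\emph{Equivalence of values.}
\begin{itemize}
\item \emph{Every feasible point is admissible.} Take any feasible point of \eqref{model:cp:kkt-conditions}. By the above, $(x_k, y_k)$ solves the system of Theorem~\ref{thm:cp:kkt-conditions} at $p = p(q,h)$, so by sufficiency (a) it is optimal for \eqref{model:cp}. Hence the point is feasible for \eqref{model:retailer} with the same objective.
\item \emph{Every $(q,h)$ is represented.} Conversely, for any $(q,h)$, take $(x^\star_k(p), y^\star_k(p))$. By (b) when $y^\star_k > 0$, and by (c) when $y^\star_k = 0$, this pair satisfies the system with multipliers in the stated boxes. Those boxes are exactly the box constraints on $\gamma^x, \lambda^x, \mu^x, \lambda^y, \mu^y$ in the MINLP, so with the indicator choices above the point is MINLP-feasible.
\item \emph{Objectives agree.} Its objective equals $F(p(q,h))$, which by Corollaries~\ref{cor:optpes} and \ref{cor:cp:closed-form} is the \eqref{model:retailer} value at $(q,h)$.
\end{itemize}
Both optimal values therefore coincide, and an MINLP optimum projects to an optimal $(q,h)$ with an optimal client response.

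\emph{Main obstacle.} The delicate point is the converse direction: the constructed multipliers must genuinely fit inside the big-$M$ boxes. I rely on the bounds of Theorem~\ref{thm:cp:kkt-conditions} for this. I would double-check the scaling of $\lambda^y_k$, since the proof there yields $\lambda^y_k \le \alpha_k \Vill^0_k$ while the MINLP uses $\Vill^0_k$. If they mismatch, I would either rescale the multiplier in the stationarity equation, or argue that the constraint $\lambda^y_k \le \Vill^0_k z^{y,+}_k$ is meant with the consistent normalization $\tfrac{1}{\alpha_k}\lambda^y_k$.
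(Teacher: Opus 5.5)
Your three layers are the paper's proof. The indicator rows enforce $p=p(q,h)$. The big-$M$ rows encode complementarity, given the multiplier bounds of Theorem~\ref{thm:cp:kkt-conditions}. Sufficiency~(a) bounds the value of \eqref{model:cp:kkt-conditions} by that of \eqref{model:retailer}. The closed-form response with indicator binaries gives the reverse inequality. Your handling of coordinates with $x^{\lowerb}_{t,k}=x^{\upperb}_{t,k}$ (keep at most one positive multiplier, set the binary of a zero multiplier to $0$) is more careful than the paper. The paper claims Assumption~\ref{assu:cons} excludes this case, but that assumption only constrains the sums.

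The point you flag at the end is a genuine gap, and neither remedy closes it for the statement as written. The paper's proof has the same hole: it invokes the stated bound $\lambda^y_k\le\Vill^0_k$, while the proof of Theorem~\ref{thm:cp:kkt-conditions} only yields $\lambda^y_k\le\alpha_k\Vill^0_k$.

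There is no slack to exploit. For fixed $p$, the constraints of \eqref{model:cp:kkt-conditions} force $x_k=x^\star_k(p)$ and $y_k=y^\star_k(p)$, since both lower-level problems are strongly convex. When $y^\star_k(p)=1$ they also force $\mu^y_k=0$ and $\lambda^y_k=\alpha_k\bigl(\Vill^0_k-\Vill_k(p)-1/\beta_k\bigr)$. This exceeds $\Vill^0_k$ as soon as $\alpha_k>1$ and $\Vill_k(p)<(1-1/\alpha_k)\Vill^0_k-1/\beta_k$, and such a $(q,h)$ is then simply infeasible for the MINLP.

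For instance, take $N=1$, so prices are uniform and $x^\star_k(p)=x^0_k$. Use two clients with unit daily energy (bounds $\tfrac12 x^0_k$, $\tfrac32 x^0_k$), $\Vill^0_A=10$ and $\Vill^0_B=1$ (uniform outside tariffs), $\beta_k=100$, $\alpha_k=10$, weights $(0.01,0.99)$, $\Cost\equiv 0$ and $[p^{\lowerb},p^{\upperb}]=[0,20]$.
\begin{itemize}
\item Problem \eqref{model:retailer} attains $0.99$ at $q_1=0.99$.
\item For $q_1<9.99$ client $A$ has $y^\star_A=1$, and the MINLP requires $\lambda^y_A=99.9-10q_1\le 10$, i.e.\ $q_1\ge 8.99$.
\item On that range client $B$ refuses, so the MINLP value is at most $0.0999$.
\end{itemize}

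So the statement as written is only guaranteed when $\alpha_k\le 1$ for every $k$, since then $\alpha_k\Vill^0_k\le\Vill^0_k$. In general the big-$M$ must be $\alpha_k\Vill^0_k$. That is your second remedy, $\tfrac{1}{\alpha_k}\lambda^y_k\le\Vill^0_k z^{y,+}_k$, but it is a correction of the formulation, not a reading of it. With that change your argument goes through verbatim. If you instead drop the factor $1/\alpha_k$ from the stationarity row, $M_k$ must then be divided by $\alpha_k$ as well.
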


\begin{proof}
The constraints $p_t - q_n \le (p^{\upperb}-p^{\lowerb}) (1 - h_{t,n})$, and $p_t - q_n \ge (p^{\lowerb}-p^{\upperb}) (1 - h_{t,n})$, for all $t \in \llbracket T \rrbracket$, and for all $n \in \llbracket N \rrbracket$ ensures that $p=p(q,h)$ since $h$ is a vector of binary variables and $\sum_{n=1}^N h_{t,n} = 1$, for all $t \in \llbracket T \rrbracket$.
\begin{enumerate}[leftmargin=0pt, itemindent=*]
\item \emph{The big-$M$ constraints encode the complementarity conditions.} Consider the pair $\big(\lambda_{t,k}^x, x^{\upperb}_{t,k} - x_{t,k}\big)$; the three others are treated identically. Both quantities are nonnegative, and the binary $z^{x, +}_{t,k}$ forces one of them to vanish: if $z^{x, +}_{t,k} = 0$ then $\lambda_{t,k}^x = 0$, and if $z^{x, +}_{t,k} = 1$ then $x_{t,k} = x^{\upperb}_{t,k}$. This is exactly $0 \le \lambda_{t,k}^x \perp (x^{\upperb}_{t,k} - x_{t,k}) \ge 0$. Conversely, any point satisfying the complementarity condition admits an admissible value of $z^{x, +}_{t,k}$ provided the coefficients multiplying the binary are valid upper bounds on the two quantities. This is the case here: $\lambda_{t,k}^x \le M_k^\lambda - M_k^\mu$ and $\mu_{t,k}^x \le M_k^\lambda - M_k^\mu$ by Theorem~\ref{thm:cp:kkt-conditions}, $\lambda_k^y \le \Vill^0_k$ and $\mu_k^y \le M_k$ by the same theorem, while $x^{\upperb}_{t,k} - x_{t,k}$ and $x_{t,k} - x^{\lowerb}_{t,k}$ are bounded by $x^{\upperb}_{t,k} - x^{\lowerb}_{t,k}$ and $1 - y_k, y_k$ by $1$, since $x_k \in \mathcal{X}_k$ and $y_k \in [0,1]$. The disjunctive constraints are therefore exact, and no optimal solution of the lower level is cut off. The inequalities $z^{x, +}_{t,k} + z^{x, -}_{t,k} \le 1$ and $z^{y, +}_k + z^{y, -}_k \le 1$ are valid rather than restrictive: under Assumption~\ref{assu:cons} one has $x^{\lowerb}_{t,k} < x^{\upperb}_{t,k}$, so the two consumption bounds cannot be active at the same $t$, and $y_k$ cannot equal both $0$ and $1$.
\item \emph{Every feasible point of~\eqref{model:cp:kkt-conditions} is feasible for~\eqref{model:retailer}.} By the previous paragraph, for each $k \in \llbracket K \rrbracket$ the pair $(x_k, y_k)$ satisfies the system of Theorem~\ref{thm:cp:kkt-conditions}, which is sufficient for optimality in~\eqref{model:cp} both when $y_k > 0$ and when $y_k = 0$. Hence $(x_k,y_k)$ solves~\eqref{model:cp}, the point is feasible for~\eqref{model:retailer}, and the two objectives agree. The optimal value of~\eqref{model:cp:kkt-conditions} is therefore at most that of~\eqref{model:retailer}.
\item \emph{Conversely.} Let $(q,h)$ be feasible for~\eqref{model:retailer} and let $\big(x_k^\star(p), y_k^\star(p)\big)$ be the client responses of Theorem~\ref{thm:cp:closed-form}. Under Assumption~\ref{assu:cons} the consumption problem~\eqref{model:cco} is a strongly convex quadratic program with affine constraints, so $x_k^\star(p)$ admits multipliers $(\lambda_k^x, \mu_k^x, \gamma_k^x)$ satisfying stationarity, complementarity and marginality. For the participation variable, write $\Vill_k = \Bill(p, x_k^\star(p)) + \Discomfort_k(x_k^\star(p), x_k^0)$ and recall that $y_k^\star(p) = \Proj_{[0,1]}\big(\beta_k(\Vill^0_k - \Vill_k)\big)$. Setting:
\begin{equation*}
\lambda_k^y = \alpha_k \max\Big(0,\Vill^0_k - \Vill_k - \tfrac{1}{\beta_k}\Big), \qquad \mu_k^y = \alpha_k \max\big(0,\Vill_k - \Vill^0_k\big),
\end{equation*}
at most one of the two is positive and the participation stationarity and complementarity conditions hold: $\lambda_k^y > 0$ forces $\beta_k(\Vill^0_k - \Vill_k) > 1$, hence $y_k^\star(p) = 1$, while $\mu_k^y > 0$ forces $\Vill^0_k < \Vill_k$, hence $y_k^\star(p) = 0$. Choosing the binaries as the indicators of the active constraints, that is $z^{x, +}_{t,k} = 1$ if and only if $x^\star_{t,k}(p) = x^{\upperb}_{t,k}$ and likewise for the three other families, all big-$M$ constraints are satisfied, since the multipliers obey the bounds of Theorem~\ref{thm:cp:kkt-conditions}. The point is thus feasible for~\eqref{model:cp:kkt-conditions} with the same objective value, so the optimal value of~\eqref{model:retailer} is at most that of~\eqref{model:cp:kkt-conditions}. The two values coincide.
\end{enumerate}
Finally, by Corollary~\ref{cor:optpes} the optimistic and pessimistic readings of~\eqref{model:retailer} coincide, so no ambiguity arises from the clients for which the lower-level solution is not unique, namely those with $y_k = 0$. \qedhere
\end{proof}

\section{Solution methods}
\label{sec:algorithmic}
Section~\ref{sec:mod} introduced the time-of-use (ToU) pricing design problem as
a bilevel optimization model: the retailer chooses a price scheme, while each
client reacts by adjusting its consumption and deciding whether to accept the
offer or revert to an outside option. The resulting problem is mixed-integer,
nonconvex, and nonsmooth. It can be tackled either by using the reformulation \eqref{model:cp:closed-form} based on Corollary~\ref{cor:cp:closed-form}, or by using the reformulation \eqref{model:cp:kkt-conditions} based on Corollary~\ref{cor:cp:kkt-conditions}. 

\smallskip 

We first study \emph{pointwise pricing} (Section~\ref{subsec:pointwise-pricing}), in which the retailer may set an independent price at each time step. This is the least-constrained version of the problem: it ignores the schedule constraints $\mathcal{H}$ and is therefore easier to solve. Its optimal value serves two purposes. First, it upper-bounds the optimal ToU profit, which allows us to estimate the potential profit loss induced by restricting the tariffs to be ToU, and to estimate the price of representability (Definition~\ref{def:por}). Second, after projection onto $\mathcal{H}$, it may provide a starting point for certain ToU methods. We then study \emph{ToU pricing} (Section~\ref{subsec:tou-pricing}), the regulator-admissible restriction obtained by grouping time steps into periods. Both models share the same lower level, so the structural results of Section~\ref{sec:analytical} apply throughout.

\begin{definition}[Price of representability] \label{def:por}
Let $F$ denote the leader's hyper-objective (the expected profit from proposing a price scheme), let $F^\star_{\mathrm{pw}} := \max_{p \in \mathcal{P}} F(p)$ be the optimal pointwise profit, and let $F^\star_{\mathrm{ToU}} := \max_{q \in \mathcal{Q}, h \in \mathcal{H}} F(p(q,h))$ be the optimal profit attainable with an admissible schedule. Assuming that the pointwise profit is positive ($F^\star_{\mathrm{pw}} > 0$), the price of representability of the schedule set $\mathcal{H}$ is defined as:
\begin{equation*}
\PoR(\mathcal{Q}, \mathcal{H}) := \frac{F^\star_{\mathrm{pw}} - F^\star_{\mathrm{ToU}}}{F^\star_{\mathrm{pw}}}.
\end{equation*}
\end{definition}
Since $\mathcal{P}(\mathcal{Q}, \mathcal{H}) := \{p(q,h) : q \in \mathcal{Q}, h \in \mathcal{H}\} \subseteq \mathcal{P}$, we have $F^\star_{\mathrm{ToU}} \le F^\star_{\mathrm{pw}}$ and hence $\PoR(\mathcal{H}) \in [0,1]$. Note that the definition is precise only when both the numerator and denominator are computed exactly, which is not always the case in practice, because the problem is numerically challenging. 

\subsection{The pointwise pricing problem}
\label{subsec:pointwise-pricing}
In pointwise pricing the retailer sets a price at each time step; equivalently, as noted in Section~\ref{subsec:feasible-pricing}, $N=T$ and the constraints on $(q,h)$ can be discarded, and it suffices to optimize the effective price on the box $\mathcal{P} = [p^{\lowerb},p^{\upperb}]^{T}$. We shall refer to this problem as pw-$\BOP$, as it is a relaxation of the original time-of-use model, \eqref{model:retailer}. Depending on how the lower level is reformulated, we consider two methods:
\begin{enumerate}[leftmargin=0pt, itemindent=*]
\item \emph{a direct method:} using Corollary~\ref{cor:cp:closed-form}, we apply first-order ascent algorithm to the obtained single-level nonconvex nonsmooth optimization problem.
\item \emph{an indirect method:} using Corollary~\ref{cor:cp:kkt-conditions}, we hand the mixed-integer nonlinear program \eqref{model:cp:kkt-conditions} to a standard solver.
\end{enumerate}

Each reformulation opens up a different family of solution methods: the nonsmooth program of the direct route can be tackled with nonsmooth first-order methods, such as stochastic gradient ascent, while the mixed-integer program of the indirect route can be handled by dedicated nonlinear solvers. We focus here on the direct method and establish that almost surely any accumulation point of the SGA recursion is a genuine Clarke critical point of the hyper-objective. We place the objective in the framework of \cite{bolte2020mathematical}: it admits an elementary selection (Definition~\ref{def:elementary-selection}), and consequently, under mild assumptions, almost every accumulation point of the SGA recursion is a genuine Clarke critical point of the hyper-objective rather than an artifact of differentiating through a particular program~\cite[Theorem 4]{bolte2020mathematical}. We recall the relevant definitions from \cite{bolte2020mathematical} in Appendix~\ref{app:elementary-selections}.

\begin{assumption}
\label{assu:cons:deg}
For all $k \in \llbracket K \rrbracket$, suppose that the baseline consumption $x_k^0$ and the bounds satisfy:
\begin{equation*}
\sum_{t \in \mathcal{T}} x^{\lowerb}_{t,k} + \textstyle\sum_{t \in \llbracket T \rrbracket \setminus \mathcal{T}} x^{\upperb}_{t,k} \neq \textstyle\sum_{t=1}^T x^0_{t,k}, \qquad \forall \mathcal{T} \subseteq \llbracket T \rrbracket.
\end{equation*}
\end{assumption}
The assumption is a non-degeneracy condition: it ensures that any element $x_k \in \mathcal{X}_k$ has a component strictly inside the bounds, which is what guarantees that the free set $I_k(p)$ used in Theorem~\ref{thm:pointwise-selection} is never empty and that the multiplier $\gamma^x_k$ is well defined. This condition will allow us to simplify the presentation of the algorithms and their proofs. From the modelling perspective, it is inessential, in particular, it can be ensured by adding an arbitrarily small perturbation. Assumption \ref{assu:cons:deg} is assumed to hold from here on. The next theorem shows that $F$ admits an elementary selection whose selection derivative is cheap to evaluate.

\begin{theorem} \label{thm:pointwise-selection}
Let item (3) of Assumption~\ref{assu:cost} hold. Then the hyper-objective \eqref{model:ho}, $F$, is an elementary selection, and its selection derivative can be evaluated in $O(T\log{T})$ elementary operations for each $F_k$.
\end{theorem}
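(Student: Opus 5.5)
The plan is to build $F$ from elementary selections using the closure properties recalled in Appendix~\ref{app:elementary-selections}: sums, products and compositions of elementary selections are elementary selections, and the chain rule applies to selection derivatives. It therefore suffices to treat each $F_k$. We write $F_k = G_k\, y_k^\star$ with $G_k(p) = \langle p, x_k^\star(p)\rangle - \Cost(x_k^\star(p))$, and show that $x_k^\star$, $\Vill_k$ and $y_k^\star$ are elementary selections. Item~(3) of Assumption~\ref{assu:cost} already covers $\Cost$, and the bilinear map $(p,x)\mapsto\langle p,x\rangle$ is polynomial, hence elementary. The weighted sum over $k$ then gives the claim for $F$.

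\emph{Step 1: the projection $x_k^\star(p)=\Proj_{\mathcal{X}_k}(x_k^0-\alpha_k p)$.} From the optimality system of Theorem~\ref{thm:cp:kkt-conditions}, $x_{t,k}^\star = \min\bigl(x^{\upperb}_{t,k}, \max(x^{\lowerb}_{t,k}, x^0_{t,k}-\alpha_k p_t+\gamma_k^x)\bigr)$, where $\gamma^x_k$ is determined by $\sum_t x^\star_{t,k} = \sum_t x^0_{t,k}$. For each pair of disjoint index sets $(U,L)$, with free set $I = \llbracket T\rrbracket\setminus(U\cup L)$, define the affine selection function
\[
x^{U,L}_t(p) = \begin{cases} x^{\upperb}_{t,k}, & t\in U,\\ x^{\lowerb}_{t,k}, & t\in L,\\ x^0_{t,k}-\alpha_k p_t+\gamma^{U,L}(p), & t\in I,\end{cases}
\]
where $\gamma^{U,L}(p)$ is obtained by solving the equality constraint:
\[
|I|\,\gamma^{U,L}(p)=\textstyle\sum_t x^0_{t,k}-\sum_{U}x^{\upperb}_{t,k}-\sum_{L}x^{\lowerb}_{t,k}-\sum_{I}(x^0_{t,k}-\alpha_k p_t).
\]
Assumption~\ref{assu:cons:deg} guarantees $I\neq\emptyset$ at every $p$, so only selections with $|I|\ge 1$ are needed and each is well defined and affine. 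The active pieces are polyhedral, by Theorem~\ref{thm:cp:closed-form}(4). The main technical point is to check the requirement of Definition~\ref{def:elementary-selection}: the selection index must be attained on a set of the correct form, and the selection functions must be analytic on a neighbourhood. Both hold because the pieces are finitely many affine maps defined on all of $\R^T$, and the region where a given $(U,L)$ is active is described by polynomial inequalities, namely the sign conditions of the KKT multipliers. The selection derivative at $p$ is the Jacobian of the active piece: $\partial x_t/\partial p_s = -\alpha_k(\delta_{ts}-1/|I|)$ for $t,s\in I$, and zero otherwise.

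\emph{Step 2: $\Vill_k$ and $y_k^\star$.} $\Vill_k(p)=\langle p,x^\star_k(p)\rangle+\tfrac{1}{2\alpha_k}\|x^\star_k(p)-x^0_k\|^2$ is a polynomial composed with $x^\star_k$. Then $y_k^\star = \Proj_{[0,1]}(\beta_k(\Vill^0_k-\Vill_k))$ composes the elementary selection $s\mapsto\min(1,\max(0,s))$, whose three pieces are $0$, $s$ and $1$, with $\Vill_k$. By closure under composition and product, $F_k$ is an elementary selection, and its selection derivative follows from the chain and product rules.

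\emph{Step 3: complexity.} Computing $x^\star_k(p)$, and with it the active sets $U,L,I$, costs $O(T\log T)$ by Corollary~\ref{cor:cp:closed-form-complexity}. The Jacobian $J$ from Step~1 has the structure $-\alpha_k(D_I - \tfrac{1}{|I|}\mathbb{1}_I\mathbb{1}_I^\top)$, so a vector--Jacobian product $J^\top v$ costs $O(T)$ without forming $J$. The selection derivative of $F_k$ is
\[
\bigl(x^\star_k + J^\top(p-\nabla\Cost(x^\star_k))\bigr)\,y^\star_k + G_k\,\partial y^\star_k ,
\]
where $\nabla\Cost$ denotes the known selection derivative of $\Cost$, available in $O(T)$ by assumption, and
\[
\partial y^\star_k = -\beta_k\,\mathbb{1}_{\{0<\beta_k(\Vill^0_k-\Vill_k)<1\}}\bigl(x^\star_k + J^\top(p+\tfrac{1}{\alpha_k}(x^\star_k-x^0_k))\bigr).
\]
This combines $O(1)$ products of $O(T)$ vectors, so the total cost is $O(T\log T)$. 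I expect the main obstacle to be Step~1: verifying rigorously that the projection fits the formal definition of an elementary selection, in particular the local-piece condition and the role of Assumption~\ref{assu:cons:deg} in excluding $I=\emptyset$, rather than the complexity count.
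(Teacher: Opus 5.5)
Your proposal is correct and follows essentially the same route as the paper. Both arguments use affine active-set pieces for $x^\star_k$, with the multiplier read off the energy constraint (well defined because Assumption~\ref{assu:cons:deg} keeps the free set nonempty), and closure of $\mathcal{S}$ under sums, products and composition together with item~(3) for $\Cost$. Both also get the complexity from an $O(T\log T)$ sorting-based identification of the active piece followed by $O(T)$ products with the structured Jacobian.

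The only difference is packaging. The paper merges the active set and the participation state into a single phase $\theta=(\pi,r)$ and breaks ties on overlapping regions lexicographically. You instead compose $x^\star_k$, $\Vill_k$ and the clipping separately and let the chain rule do the bookkeeping. For your index map, assigning boundary points to one piece, e.g.\ via strict inequalities on the free set, gives level sets of the required form.
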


\begin{proof}
\emph{Elementary selection.} By Theorem~\ref{thm:cp:closed-form}, the maps $p \mapsto (x_k^\star(p), y_k^\star(p))$ are continuous and piecewise polynomial over semi-algebraic regions. Such maps are elementary selections in the sense of Definition~\ref{def:elementary-selection}, understood componentwise for the vector-valued map $x^\star_k$: polynomials are obtained from affine maps by finitely many products and sums, hence lie in the class of elementary selections $\mathcal{S}$ (Definition \ref{def:elementary-selection}), and a semi-algebraic region is by definition the solution set of finitely many polynomial equalities and inequalities, hence of finitely many relations between elements of $\mathcal{S}$, so the associated index map lies in $\mathcal{I}$ (Definition \ref{def:elementary-index}). The box constraint is incorporated by precomposing with the clipping $\Proj_{[p^{\lowerb},p^{\upperb}]}$, itself an elementary selection; by stability of $\mathcal S$ under composition, sum, and product~\cite[Proposition 1]{bolte2020mathematical}, and since $\Cost \in \mathcal{S}$ by item (3) of Assumption~\ref{assu:cost}, each $F_k$, and hence $F = \textstyle\sum_k F_k w_k$, is an elementary selection. We now exhibit such a selection explicitly, which also yields the complexity bound.

\smallskip

Given the price signal $p$, client $k$'s optimal consumption takes the form
\begin{equation*}
x_{t,k}^\star(p) = \min\left\{x^{\upperb}_{t,k}, \max\left\{x^{\lowerb}_{t,k}, x^0_{t,k} - \alpha_k (p_t - \gamma_k^x(p))\right\}\right\},
\end{equation*}
where $\gamma_k^x(p)$ is the multiplier of the equality constraint. It is computed from the ordered partition of $\llbracket T \rrbracket$ into the lower-saturated, free, and upper-saturated coordinates of $x_k^\star(p)$, denoted by the active sets $(L_k(p), I_k(p), U_k(p))$, as:
\begin{equation*}
\gamma_k^x(p) = \tfrac{1}{\alpha_k |I_k(p)|} \left[ \textstyle\sum_{t=1}^T x^0_{t,k} - \textstyle\sum_{t \in I_k(p)} \left(x^0_{t,k} - \alpha_k p_t\right) - \textstyle\sum_{t \in U_k(p)} x^{\upperb}_{t,k} - \textstyle\sum_{t \in L_k(p)} x^{\lowerb}_{t,k} \right],
\end{equation*}
Assumption \ref{assu:cons:deg} ensures that the free set $I_k(p)$ is never empty, so that the expression is well-defined. The optimal participation $y_k^\star(p)$ is then obtained by comparing $z_k = \beta_k(\Vill^0_k - \Vill_k(p))$ to $0$ and $1$, where $\Vill_k(p) := \Bill(p, x_k^\star(p)) + \Discomfort_k(x_k^\star(p), x_k^0)$: it equals $0$ if $z_k < 0$, equals $1$ if $z_k > 1$, and equals $\beta_k(\Vill^0_k - \Vill_k(p))$ otherwise.

\smallskip

Enumerating the finitely many combinations of active sets and participation states leads to the notion of a \emph{phase} (denoted $\theta$, see Appendix \ref{app:notation}): a pair $\theta = (\pi, q)$ with $\pi = (L^\theta, I^\theta, U^\theta)$ the partition (lower-saturated, free, upper-saturated) and $r \in \{0, \star, 1\}$ the participation state (clamped at $0$, interior, clamped at $1$). There are at most $m = 3^{T} \cdot 3$ phases, labelled $\theta = 1, \dots, m$. For each phase, define the consumption map $\xi^\theta_k$ by:
\begin{align*}
\xi^\theta_{t,k}(p) &=
\begin{cases}
x^{\lowerb}_{t,k}, & t\in L^\theta,\\
x^0_{t,k}-\alpha_k (p_t - \gamma_k^\theta(p)), & t\in I^\theta,\\
x^{\upperb}_{t,k}, & t\in U^\theta,
\end{cases}
\end{align*}
and the participation map $\eta^\theta_k$ by:
\begin{align*}
\eta^\theta_k(p) =
\begin{cases}
0, & r=0,\\
\beta_k\big(\Vill^0_k-[\Bill(p,\xi^\theta_k(p))+\Discomfort_k(\xi^\theta_k(p),x^0_k)]\big), & r=\star,\\
1, & r=1,
\end{cases}
\end{align*}
where the multiplier $\gamma_k^\theta(p)$ is given by:
\begin{equation*}
\gamma_k^\theta(p) = \tfrac{1}{\alpha_k |I^\theta|}\Big[\textstyle\sum_{t\in \llbracket T \rrbracket}x^0_{t,k} -\sum_{t\in I^\theta}\big(x^0_{t,k}-\alpha_k p_t\big) -\sum_{t\in U^\theta}x^{\upperb}_{t,k} -\sum_{t\in L^\theta}x^{\lowerb}_{t,k}\Big].
\end{equation*}
The function $F_k$ can then be expressed as an elementary function $F_{k,\theta}$ over each phase $\theta$, defined by:
\begin{equation*}
F_{k,\theta}(p)=\big[\Bill(p,\xi^\theta_k(p))-\Cost(\xi^\theta_k(p))\big] \eta^\theta_k(p).
\end{equation*}
Since $\xi^\theta_k$ and $\gamma^\theta_k$ are affine in $p$ and $\Bill, \Cost, \Discomfort_k \in \mathcal{E}$, each $F_{k,\theta} \in \mathcal{E}$. Define $\sigma_k(p) = \theta$ on the region $R^\theta$ cut out by the saturation conditions: writing $v^\theta_{t,k}(p) := x^0_{t,k} - \alpha_k (p_t - \gamma^\theta_k(p))$ and
$z^\theta_k(p) := \beta_k\big(\Vill^0_k - [\Bill(p,\xi^\theta_k(p)) + \Discomfort_k(\xi^\theta_k(p), x^0_k)]\big)$,
\begin{equation*}
R^\theta = \left\{ p \;\middle|\; \begin{aligned}
v^\theta_{t,k}(p) \le x^{\lowerb}_{t,k}, && t \in L^\theta, \\
x^{\lowerb}_{t,k} \le v^\theta_{t,k}(p) \le x^{\upperb}_{t,k}, && t \in I^\theta, \\
v^\theta_{t,k}(p) \ge x^{\upperb}_{t,k}, && t \in U^\theta
\end{aligned} \right\} \cap Y^\theta,
\end{equation*}
where $Y^\theta = \{z^\theta_k(p) \le 0\}$, $\{0 \le z^\theta_k(p) \le 1\}$, or $\{z^\theta_k(p) \ge 1\}$ when $r = 0, \star, 1$. Each $R^\theta$ is the solution set of finitely many inequalities between elementary functions, so $\sigma_k$ is an elementary selection ($\sigma_k \in \mathcal{I}$). When ties occur at phase boundaries and multiple phases are valid, we break them using a fixed lexicographic rule. Since $p \mapsto (x_k^\star(p), y_k^\star(p))$ is continuous, the competing pieces coincide in value at these points, making the tie-breaking choice inconsequential with respect to the value. On $R^\theta$ the active-set optimality conditions give $x_k^\star(p) = \xi^\theta_k(p)$ and $y_k^\star(p) = \eta^\theta_k(p)$, hence $F_k(p) = F_{k,\sigma_k(p)}(p)$, and the selection derivative is $\widehat\nabla_{\sigma_k} F_k(p) = \nabla F_{k,\sigma_k(p)}(p)$.

\smallskip

\emph{Complexity.} Evaluating the selection derivative splits into identifying the active phase $\theta = \sigma_k(p)$ and differentiating the selected piece $F_{k,\theta}$. Identifying $\theta$ is exactly the active-set computation of $(x_k^\star(p), y_k^\star(p))$: the sorting-based algorithm of \cite[Section~1.C]{jacquot2019game} returns $L_k(p), I_k(p), U_k(p)$ in $O(T\log{T})$, after which a single $O(T)$ pass evaluates $z^\theta_k(p)$ (a dot product for $\Bill$, a separable sum for $\Discomfort_k$) and fixes $r$; the algorithm never enumerates the $m$ phases. With $\theta$ fixed, $F_{k,\theta}$ is a single elementary expression whose evaluation costs $O(T)$: the maps $p \mapsto \gamma^\theta_k(p) \mapsto \xi^\theta_k(p)$ are affine with a Jacobian ($\partial \xi^\theta_{t,k}/\partial p_s = - \alpha_k \mathbf 1[s{=}t] + \alpha_k \mathbf 1[s,t\in I^\theta]/|I^\theta|$), $\Bill, \Discomfort_k$ each cost $O(T)$, and $\Cost$ has a known selection derivative. The input clipping $\Proj_{[p^{\lowerb},p^{\upperb}]}$ is also $O(T)$. Through the chain rule, we can evaluate $\widehat\nabla_{\sigma_k} F_k(p)$ in $O(T\log{T})$.\qedhere
\end{proof}

We handle the box constraints by composition rather than by projecting the iterate, we optimize the unconstrained function:
\begin{equation*}
\label{eq:aug-ho} \tag{$\AHO$}
\widetilde{F}(p) = F\big(\Proj_{[p^{\lowerb},p^{\upperb}]}(p)\big),
\end{equation*}
whose maximization is equivalent to that of $F$ over the box. The stochastic gradient ascent (SGA) recursion is then:
\begin{equation} \label{eq:iterate-recursion}
p_{i+1} = p_{i} + \eta_i \widehat\nabla_{\sigma} {\widetilde{F}_{B_i}}(p_i),
\end{equation}
where $\eta_i$ is the step size, $\widehat\nabla_{\sigma}$ the selection derivative, and $\widetilde{F}_{B_i} := \textstyle\sum_{k \in B_i} \widetilde{F}_k(p)$ the mini-batch estimate built from the batch $B_i$, with $\widetilde{F}_k(p) = F_k\big(\Proj_{[p^{\lowerb},p^{\upperb}]}(p)\big)$. The selection from Theorem~\ref{thm:pointwise-selection} is augmented with the clipping onto the box to yield a selection for $\widetilde{F}_k$. The selection derivative is then:
\begin{equation*}
\widehat\nabla_{\sigma} \widetilde{F}_k(p) = \widehat\nabla_{\sigma} \Proj_{[p^{\lowerb},p^{\upperb}]}(p) \cdot \widehat\nabla_{\sigma} F_k\big(\Proj_{[p^{\lowerb},p^{\upperb}]}(p)\big),
\end{equation*}
where the selection for the projection is:
\begin{equation*}
\big(\widehat\nabla_{\sigma} \Proj_{[p^{\lowerb},p^{\upperb}]}(p)\big)_t = \begin{cases}
0, & p_t < p^{\lowerb} \text{ or } p_t > p^{\upperb},\\
1, & \text{otherwise}.
\end{cases}
\end{equation*}
Algorithm~\ref{alg:sgd-pointwise} implements~\eqref{eq:iterate-recursion}.

\begin{algorithm}[!ht]
\caption{Stochastic gradient ascent for pointwise pricing}
\begin{algorithmic}[1]
\Require initial effective prices $p_0 \in \R^T$; price bounds $p^{\lowerb},p^{\upperb}$; client data; step sizes $(\eta_i)_{i\ge 0}$; batch size $b$; number of iterations $N_{\mathrm{it}}$
\Ensure an iterate sequence whose accumulation points are almost surely Clarke critical for $\widetilde{F}$ (Corollary~\ref{corollary:sgd-convergence}); the last iterate is returned
\For{$i = 0,\dots,N_{\mathrm{it}}-1$}
\State sample a mini-batch $B_i \subseteq \llbracket K \rrbracket$ uniformly without replacement, $|B_i| = b$
\State $g \gets 0$
\State $\hat{p} \gets \Proj_{[p^{\lowerb},p^{\upperb}]}(p)$
\For{$k \in B_i$}
\State compute the active sets $(L_k,I_k,U_k)$ of $x_k^\star(\hat{p})$ and the multiplier $\gamma_k^x(p)$ 
\State $x_k^\star \gets \Proj_{\mathcal{X}_k}(x_k^0 - \alpha_k \hat{p})$;\quad
$y_k^\star \gets \Proj_{[0,1]}\!\big(\beta_k(\Vill^0_k - \Vill_k(\hat{p}))\big)$
\State compute the phase $\sigma_k(p)$ and the selection derivative $\widehat\nabla_{\sigma_k} \widetilde{F}_k(p)$
\State $g \gets g + b^{-1} K w_k \widehat\nabla_{\sigma_k} \widetilde{F}_k(p)$
\EndFor
\State $p \gets p + \eta_i g$ 
\EndFor
\State \Return $p$
\end{algorithmic}
\label{alg:sgd-pointwise}
\end{algorithm}

\begin{proposition}
\label{prop:iterate-boundness}
The iterate sequence $(p_i)_{i \ge 0}$ generated by Algorithm~\ref{alg:sgd-pointwise} is bounded for every choice of $p_0$.
\end{proposition}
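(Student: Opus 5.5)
The key observation is that the clipping selection derivative kills every coordinate of the update that lies outside the box. Recall
\[
\widehat\nabla_{\sigma}\widetilde F_k(p)=\widehat\nabla_{\sigma}\Proj_{[p^{\lowerb},p^{\upperb}]}(p)\cdot\widehat\nabla_{\sigma}F_k\big(\Proj_{[p^{\lowerb},p^{\upperb}]}(p)\big),
\]
where the first factor is the diagonal $0/1$ matrix that vanishes on coordinates with $p_t<p^{\lowerb}$ or $p_t>p^{\upperb}$. So once a coordinate $p_{i,t}$ leaves $[p^{\lowerb},p^{\upperb}]$, the $t$-th component of the mini-batch direction $g$ is zero and the coordinate stays frozen. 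It can only move again if it re-enters the box, which is impossible while it stays frozen.

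The plan is to argue coordinatewise. I would first bound the selection derivative on the box. For each $k$ and each phase $\theta$, the piece $F_{k,\theta}$ is built from the affine maps $\xi^\theta_k,\gamma^\theta_k$, whose Jacobian entries are bounded by $\alpha_k(1+1/|I^\theta|)\le 2\alpha_k$. It also involves the bilinear bill, the quadratic discomfort, and $\Cost$ with its known selection derivative. On the compact box $\mathcal P$ each piece has a bounded gradient. Since there are finitely many phases ($m\le 3^{T+1}$), we get a uniform constant
\[
G:=\max_k\max_\theta\sup_{p\in\mathcal P}\|\nabla F_{k,\theta}(p)\|<\infty .
\]
Alternatively one could use that $\widehat\nabla_{\sigma}F_k$ lies in the Clarke subdifferential and invoke the Lipschitz constant of Proposition~\ref{prop:lipschitz}. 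The direct phase-wise bound avoids relying on that inclusion, so I prefer it. Consequently
\[
\|g\|\le \frac{K}{b}\sum_{k\in B_i}w_k\,G\le K\Big(\max_k w_k\Big)G=:\bar G
\]
at every iteration.

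The main step is to show that $p_{i,t}$ always stays in the interval
\[
\Big[\min\{p_{0,t},\,p^{\lowerb}-\bar\eta\bar G\},\ \max\{p_{0,t},\,p^{\upperb}+\bar\eta\bar G\}\Big],
\]
where $\bar\eta:=\sup_i\eta_i$. I would prove this by induction on $i$ with two cases. If $p_{i,t}\notin[p^{\lowerb},p^{\upperb}]$, then $g_t=0$ and $p_{i+1,t}=p_{i,t}$. If $p_{i,t}\in[p^{\lowerb},p^{\upperb}]$, then $|p_{i+1,t}-p_{i,t}|\le\eta_i\bar G$, so the new value lies within $\bar\eta\bar G$ of the box. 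Either way the invariant is preserved, which gives $\|p_i\|_\infty\le\max\{\|p_0\|_\infty,\ p^{\upperb}+\bar\eta\bar G\}$.

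The hard step is this uniform gradient bound together with the requirement $\bar\eta<\infty$. The latter holds for the usual step-size schedules, and I would state it as a standing hypothesis on the algorithm's step sizes (bounded steps, as in the setting of \cite[Theorem 4]{bolte2020mathematical}). One further subtlety should be checked explicitly: boundary ties, where $p_t=p^{\lowerb}$ or $p_t=p^{\upperb}$. Because the clipping selection assigns derivative $1$ there, such coordinates count as "inside the box", so the second case of the induction covers them.
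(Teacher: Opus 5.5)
Your argument is correct, but the step you call the hard one is unnecessary, and it makes you prove a slightly weaker statement than the paper's. The paper uses only the freezing mechanism from your first paragraph. A coordinate that leaves $[p^{\lowerb},p^{\upperb}]$ has a zero update component from then on, so it can exit at most once. If $i_0$ is the first exit time of coordinate $t$, then $p_{i,t}\in[p^{\lowerb},p^{\upperb}]$ for $i<i_0$ and $p_{i,t}=p_{i_0,t}$ for $i\ge i_0$. Hence $\{p_{i,t}\}_{i\ge 0}\subseteq[p^{\lowerb},p^{\upperb}]\cup\{p_{i_0,t}\}$, which is bounded because $p_{i_0,t}$ is a single finite number. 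No control on the size of the exit jump is needed.

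Your version controls that jump through a uniform bound $\bar G$ on the phase-wise selection derivatives and through $\bar\eta=\sup_i\eta_i$. In return you get a deterministic bound $\|p_i\|_\infty\le\max\{\|p_0\|_\infty,\,p^{\upperb}+\bar\eta\bar G\}$, valid for every sample path and batch sequence at once. The paper's bound instead depends on the realized iterate at the exit time. Your uniformity can be useful, for instance to exhibit a compact set containing all iterates a priori.

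The cost is the hypothesis $\sup_i\eta_i<\infty$, which is not in the proposition. That hypothesis is harmless for the schedules $\eta_i=c\,r_i$ with $r_i\to 0$ of Corollary~\ref{corollary:sgd-convergence}, since a convergent sequence is bounded. For arbitrary step sizes, though, you can drop both it and the gradient estimate by closing with the exit-at-most-once observation above.
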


\begin{proof} Recall the update of Algorithm~\ref{alg:sgd-pointwise},
\begin{equation*}
p_{i+1}=p_i+\eta_i g, \qquad g=\sum_{k\in B_i} b^{-1} K w_k \widehat\nabla_{\sigma} \big(F_k\circ\Proj_{[p^{\lowerb},p^{\upperb}]}\big)(p_i),
\end{equation*}
where $B_i\subseteq \llbracket K \rrbracket$ is the batch drawn at iteration $i$ and $b$ is its size. 
\begin{itemize}[leftmargin=0pt, itemindent=*]
\item \emph{Step 1: outside the box the selection gradient vanishes in the corresponding coordinate.}
The projection acts coordinatewise, $\Proj_{[p^{\lowerb},p^{\upperb}]}(p)_t=\pi(p_t)$ with $\pi(u)=\min\{p^{\upperb},\max\{p^{\lowerb},u\}\}$. The selection breaks ties at the nonsmooth points and the chain rule applies to selection derivatives, for every $p\in\mathbb R^{T}$, $k\in \llbracket K \rrbracket$ and every coordinate $t$:
\begin{equation*}
\big(\widehat\nabla_{\sigma} (F_k\circ \Proj_{[p^{\lowerb},p^{\upperb}]})(p)\big)_t =\widehat\nabla_{\sigma} \pi(p_t) \big(\widehat\nabla_{\sigma} F_k(\Proj_{[p^{\lowerb},p^{\upperb}]}(p))\big)_t.
\end{equation*}
Fix a coordinate $t$ and set $O_t:=\{p\in\mathbb R^{T}\mid p_t\notin[p^{\lowerb},p^{\upperb}]\}$. If $p\in O_t$, then $p_t$ lies strictly outside $[p^{\lowerb},p^{\upperb}]$, and $\pi$ is constant on a neighbourhood of $p_t$, so $\widehat\nabla_{\sigma} \pi(p_t)=0$ irrespective of the values assigned to the selection at $p^{\lowerb},p^{\upperb}$. Hence:
\begin{equation*}
\big(\widehat\nabla_{\sigma} (F_k\circ \Proj_{[p^{\lowerb},p^{\upperb}]})(p)\big)_t=0 \qquad \forall p \in O_t, k \in \llbracket K \rrbracket,
\end{equation*}
and, $g_i$ being a linear combination of these vectors, $(g_i)_t=0$ whenever $p_i\in O_t$, whatever
the batch $B_i$.
\item \emph{Step 2: a coordinate leaving the box is frozen.} Suppose $p_{i_0}\in O_t$. By Step 1, $(g_{i_0})_t=0$, so $p_{i_0+1,t}=p_{i_0,t}$; membership in $O_t$ depends on the $t$-th coordinate only, whence $p_{i_0+1}\in O_t$. By induction, $p_i\in O_t$ and $p_{i,t}=p_{i_0,t}$ for all $i\ge i_0$.
\item \emph{Step 3: conclusion.} Fix $t$ and distinguish two cases. If $p_i \notin O_t$ for all $i \ge 0$, then $p_{i,t} \in [p^{\lowerb}, p^{\upperb}]$ for all $i$. Otherwise let $i_0$ such that $p_{i_0} \in O_t$. Then, $p_{i,t}=p_{i_0,t}$ for $i\ge i_0$ by Step 2. Either way, $\{p_{i,t}\}_{i \ge 0}$ is bounded. Since there are finitely many coordinates, the iterate $\{p_i\}_{i \ge 0}$ remains bounded. \qedhere
\end{itemize}
\end{proof}

\begin{corollary} \label{corollary:sgd-convergence}
Let for all $i$, $\eta_i = c r_i$, and $r_i = o(1 / \log{i})$. Then, for almost every $c \in (0,1]$, almost surely every accumulation point of the sequence generated by Algorithm~\ref{alg:sgd-pointwise} is Clarke critical for the augmented hyper-objective \eqref{eq:aug-ho}, $\widetilde{F}$.
\end{corollary}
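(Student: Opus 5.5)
The plan is to obtain the corollary as a direct application of Theorem~4 of Bolte and Pauwels~\cite{bolte2020mathematical}, which covers stochastic (mini-batch) gradient recursions driven by selection derivatives of a finite sum of elementary selections. That theorem needs four ingredients. First, each summand must be a locally Lipschitz elementary selection with a well-defined selection derivative. Second, the mini-batch is drawn uniformly, so the stochastic direction is an unbiased estimate of the full selection derivative. Third, the step sizes are $\eta_i = c\, r_i$ with $r_i = o(1/\log i)$, together with the usual nonsummability requirement on $(r_i)$ that appears in their theorem. Fourth, the iterates remain almost surely bounded. The conclusion is then that, for Lebesgue-almost every $c \in (0,1]$, almost surely every accumulation point is Clarke critical for $\widetilde F$.

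\textbf{Verifying the structural hypotheses.} By Theorem~\ref{thm:pointwise-selection}, each $F_k$ is an elementary selection with the explicit selection $\sigma_k$. The clipping $\Proj_{[p^{\lowerb},p^{\upperb}]}$ is itself an elementary selection, so stability under composition~\cite[Proposition~1]{bolte2020mathematical} makes each $\widetilde F_k = F_k \circ \Proj_{[p^{\lowerb},p^{\upperb}]}$ an elementary selection. Its selection derivative is exactly the one implemented in Algorithm~\ref{alg:sgd-pointwise}, obtained by the chain rule. For local Lipschitz continuity, Proposition~\ref{prop:lipschitz} shows that $F$ is Lipschitz on $\mathcal{P}$, and composing with the nonexpansive projection makes $\widetilde F$ globally Lipschitz on $\R^T$. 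For the sampling, drawing $B_i$ uniformly without replacement with $|B_i|=b$ gives $\mathbb{E}\big[b^{-1}K\sum_{k\in B_i} w_k \widehat\nabla_{\sigma_k}\widetilde F_k(p)\big] = \sum_k w_k \widehat\nabla_{\sigma_k}\widetilde F_k(p)$. This is the finite-sum sampling setting of their theorem, with the batches drawn independently of the past iterates. Note also that maximization of $\widetilde F$ is minimization of $-\widetilde F$, and Clarke criticality is symmetric under this change of sign.

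\textbf{Boundedness.} Proposition~\ref{prop:iterate-boundness} gives boundedness deterministically, for every realization of the batches and every $p_0$, so the almost-sure boundedness hypothesis holds trivially.

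\textbf{Main obstacle.} The delicate step is matching the exact form of~\cite[Theorem~4]{bolte2020mathematical}. The "almost every $c$" clause comes from their argument that, for generic step scalings, the iterates almost surely avoid the measure-zero set where selection derivatives differ from Clarke-gradient elements. I would need to check that the step-size condition stated here ($r_i = o(1/\log i)$, plus the implicit $\sum_i r_i = \infty$) is the one they require, and that they allow an arbitrary fixed initialization $p_0$ rather than a random one. If their statement requires the full gradient sum, I would appeal to the minibatch remark accompanying that theorem, or view the recursion as a perturbed full-gradient scheme with martingale-difference noise. That noise is bounded because the selection derivatives are uniformly bounded on the bounded set containing the iterates, which follows from the global Lipschitz constant $L_F$ and the finiteness of phases. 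With these checks, the conclusion transfers verbatim.
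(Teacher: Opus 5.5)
Your proposal is correct and follows the paper's proof: the paper likewise obtains the corollary directly from \cite[Theorem~4]{bolte2020mathematical}, using Theorem~\ref{thm:pointwise-selection} for the elementary-selection structure and Proposition~\ref{prop:iterate-boundness} for boundedness of the iterates (deterministic, hence almost sure), and adds only a remark on the $b^{-1}Kw_k$ weighting. Your extra checks (unbiasedness of the mini-batch direction, Lipschitz continuity, the ascent-to-descent sign change) go beyond what the paper writes, and the mini-batch issue is settled most cleanly by treating each $b$-subset $B$ as a single summand $g_B := b^{-1}K\sum_{k\in B} w_k \widetilde{F}_k$: uniform sampling of $B$ is then exactly the single-index recursion of that theorem applied to $\widetilde{F} = \binom{K}{b}^{-1}\sum_{B} g_B$.
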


\begin{proof}
The proof follows from Theorem~\ref{thm:pointwise-selection}, Proposition \ref{prop:iterate-boundness} and \cite[Theorem 4]{bolte2020mathematical}, which states that any accumulation point of the SGD recursion is almost surely Clarke critical point of the objective, provided that the objective is an elementary selection and the iterates remain bounded almost surely. Note that here, we considered the term $b^{-1} K w_k$ in the gradient computation because our initial objective is a weighted sum of the individual client objectives. \qedhere
\end{proof}

\subsection{Time-of-use pricing}
\label{subsec:tou-pricing}
We now return to the ToU problem, in which the retailer is constrained to choose a price scheme that is constant over each period. The integrality of $\mathcal{H}$ makes the problem combinatorial, so the direct gradient approach of Section~\ref{subsec:pointwise-pricing} no longer applies unchanged. We present four methods, trading computational cost against profit:
\begin{enumerate}[leftmargin=0pt, itemindent=*]
\item \emph{a complete enumeration method:} which enumerates all admissible schedules and solves the resulting problem for each schedule;
\item \emph{an approximation rounding method:} which projects a pointwise solution onto $\mathcal{H}$ and re-optimizes the period prices over the recovered schedule;
\item \emph{an indirect method:} which augments the mixed-integer reformulation with the integer schedule variables and solves the resulting single-level program;
\item \emph{a direct method:} which reparametrizes the schedule by continuous start instants and lengths and applies the gradient machinery of Section~\ref{subsec:pointwise-pricing} to the relaxed problem (Algorithm \ref{alg:sgd-tou}), recovering a feasible schedule at the end by using the second method (Approximation rounding method).
\end{enumerate}

\subsubsection{Complete enumeration method}
\label{subsubsec:enumeration}
The complete enumeration method is the most straightforward: it simply enumerates every admissible schedule $h \in \mathcal{H}$ and solves the inner continuous problem over $q \in \mathcal{Q}$ for each.
\begin{equation*} \label{eq:enumeration}
\max_{h \in \mathcal{H}} \Big[\max_{q \in \mathcal{Q}} F\big(p(q,h)\big)\Big].
\end{equation*}
The inner problem falls into the pointwise pricing problem studied in Section~\ref{subsec:pointwise-pricing}, and the outer enumeration can be parallelized. The method grows linearly in $|\mathcal{H}|$, which can be large depending on the regulatory constraints considered and on $(N, S, L)$. When available, we use it as a \emph{baseline}: it yields the value $F^\star_{\mathrm{ToU}}$ of Definition~\ref{def:por} up to the accuracy of the inner solves, and it therefore serves as a benchmark for the methods below. 

\smallskip

This does not make the remaining methods redundant, but it does determine what they are for. The cardinality of $\mathcal{H}$ grows quickly with the number of time periods and the complexity of the regulatory constraints. Refining the time grid below the hour multiplies the number of admissible boundary positions; adding periods multiplies the number of subperiod orderings; and moving from a single representative day to a seasonal or weekday/weekend calendar composes the schedule sets multiplicatively across days (cf. Appendix \ref{app:multi-day}). 

\begin{remark}
\label{rem:enumeration-cost}
Given the schedule, one needs around three minutes to solve the inner problem for a single representative day with $K=100$ clients on a standard laptop. The enumeration method is therefore inefficient for the experiments of Section~\ref{sec:num}, which consider just one typical day and basic regulatory constraints: at that rate, using the experimental setup of Section~\ref{sec:num} and the cardinalities in Table~\ref{tab:schedule:card}, complete enumeration per choice of parameters would take approximately $1284 \times 2:16 = 2910$ minutes (about 2 days) for the two-period schedule, and about $29016 \times 2:37 = 75925$ minutes (about 53 days) for the three-period schedule.
\end{remark}

\subsubsection{Approximation rounding method}
\label{subsubsec:rounding}
The approximation rounding method reuses the pointwise solution as a starting point. Let $p^\star$ be the price vector returned by the pointwise problem (Section~\ref{subsec:pointwise-pricing}). Since $p^\star$ is not necessarily representable by any admissible schedule, we first project it onto $\mathcal{H}$:
\begin{equation*}
(q^\star, h^\star) \in \argmin_{q \in \mathcal{Q}, h \in \mathcal{H}}\; \|p^\star - p(q,h)\|_2^2.
\end{equation*}
The bilinear terms in $p(q,h)$ are linearized exactly by the big-$M$ indicator constraints already used in Corollary~\ref{cor:cp:kkt-conditions}, namely $p_t - q_n \le (p^{\upperb}-p^{\lowerb})(1-h_{t,n})$ and $p_t - q_n \ge (p^{\lowerb}-p^{\upperb})(1-h_{t,n})$, which are exact because $h$ is binary with $\sum_n h_{t,n} = 1$ and $q$ is bounded. With the squared Euclidean distance the projection is therefore a mixed-integer \emph{quadratic} program, which is how it is solved in Section~\ref{sec:num}; replacing $\|\cdot\|_2^2$ by $\|\cdot\|_1$ or $\|\cdot\|_\infty$ would give a mixed-integer linear program instead, at the cost of no longer being a Euclidean projection. 

\begin{corollary}
Let $\epsilon = L_F \|p^\star - p(q^\star, h^\star)\|$ where $L_F$ is a Lipschitz constant of $F$ over the box $[p^{\lowerb}, p^{\upperb}]$, we have $F(p(q^\star, h^\star))$ is an $\epsilon$-approximation of $F(p^\star)$.
\end{corollary}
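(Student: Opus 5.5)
The plan is to reduce the claim directly to the Lipschitz estimate of Proposition~\ref{prop:lipschitz}. The statement asserts $|F(p(q^\star,h^\star)) - F(p^\star)| \le \epsilon$ with $\epsilon = L_F\|p^\star - p(q^\star,h^\star)\|$. Since Proposition~\ref{prop:lipschitz} gives $|F(p)-F(p')| \le L_F\|p-p'\|$ for all $p,p' \in \mathcal{P}$, it only remains to check that both arguments lie in the box $\mathcal{P} = [p^{\lowerb},p^{\upperb}]^T$, and then to apply the inequality with $p = p^\star$ and $p' = p(q^\star,h^\star)$.

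\textbf{Membership of the two points in $\mathcal{P}$.}
\begin{itemize}
\item For $p^\star$: it is the output of the pointwise problem pw-$\BOP$, which optimizes over $\mathcal{P}$. If it comes from Algorithm~\ref{alg:sgd-pointwise}, whose raw iterate may leave the box, I interpret $p^\star$ as the clipped vector $\Proj_{[p^{\lowerb},p^{\upperb}]}(p)$, since this is the price actually proposed and $\widetilde F(p) = F(\Proj_{[p^{\lowerb},p^{\upperb}]}(p))$.
\item For $p(q^\star,h^\star)$: $h^\star \in \mathcal{H}$ gives $\sum_n h^\star_{t,n} = 1$ with binary entries, so $p_t(q^\star,h^\star) = q^\star_{n(t)}$ for the unique active period $n(t)$. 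Since $q^\star \in \mathcal{Q}$, we have $p^{\lowerb} \le q^\star_1 \le \dots \le q^\star_N \le p^{\upperb}$, hence $p(q^\star,h^\star) \in \mathcal{P}$.
\end{itemize}

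\textbf{Conclusion.} Applying Proposition~\ref{prop:lipschitz} (which uses item~(2) of Assumption~\ref{assu:cost}) then gives
\begin{equation*}
F(p^\star) - \epsilon \;\le\; F(p(q^\star,h^\star)) \;\le\; F(p^\star) + \epsilon .
\end{equation*}
If $p^\star$ is a global maximizer of $F$ over $\mathcal{P}$, the upper inequality can be sharpened to $F(p(q^\star,h^\star)) \le F(p^\star)$, because $p(q^\star,h^\star) \in \mathcal{P}$. Combined with the lower bound, this yields $F^\star_{\mathrm{ToU}} \ge F^\star_{\mathrm{pw}} - \epsilon$, i.e.\ $\PoR \le \epsilon / F^\star_{\mathrm{pw}}$.

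\textbf{Main obstacle.} The only delicate point is interpretive rather than technical: making sure the Lipschitz bound is invoked on $\mathcal{P}$, where it was proved. This is exactly why the box-membership check comes first, together with the clipping convention for $p^\star$. Optimality of the projection $(q^\star,h^\star)$ is not needed at all; any feasible pair gives the same bound with its own distance.
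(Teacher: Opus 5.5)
Your proposal is correct and is the argument the paper intends: the paper states this corollary without a separate proof, as a direct consequence of the Lipschitz bound of Proposition~\ref{prop:lipschitz} on $\mathcal{P}$. Checking that $p^\star$ (read as the clipped SGA output) and $p(q^\star,h^\star)$ both lie in $[p^{\lowerb},p^{\upperb}]^T$ before applying that bound is exactly the verification needed.
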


The projection minimizes the distance to $p^\star$, not profit, so its profit may be markedly below the pointwise benchmark. We therefore fix the recovered schedule $h^\star$ and re-optimize the period prices alone:
\begin{equation*}
\max_{q \in \mathcal{Q}}\; F\big(p(q, h^\star)\big).
\end{equation*}
At fixed $h^\star$ the map $q \mapsto p(q, h^\star)$ is linear, so $q \mapsto F(p(q,h^\star))$ is again an elementary selection~\cite[Stability of $\mathcal{S}$, Proposition 1]{bolte2020mathematical} and is solved by the methods of Section~\ref{subsec:pointwise-pricing}.

\subsubsection{Indirect method}
\label{subsubsec:indirect}
The indirect method solves the ToU problem as a single-level problem, using the formulation of Corollary~\ref{cor:cp:kkt-conditions} with the schedule variables $h$ left free and the switch constraint written in the linear form~\eqref{eq:switch-counting}. Unlike the rounding method, it optimizes the schedule and the price levels jointly and fixes nothing a priori; The price to pay is a much larger search space: the model gains $T \cdot N$ binary variables and the constraints regulating the schedule. 

\subsubsection{Direct method}
\label{subsubsec:direct}
The direct method optimizes the schedule by gradient descent, which requires a \emph{continuous} parametrization of the schedule: one in which period boundaries may fall at real instants rather than integer time steps. We first introduce a low-dimensional parametrization of $\mathcal{H}$, and then exploit it in the same way as in Section~\ref{subsec:pointwise-pricing} to compute a selection derivative of the hyper-objective with respect to the schedule parameters.

\begin{proposition}[Lower-dimensional schedule representation] \label{prop:charec-calen}
Let $\mathcal{M}$ be the set of triples $(\mathcal{s}, \omega, l)$ such that:
\begin{enumerate}[leftmargin=0pt, itemindent=*]
\item $\mathcal{s} \in \{1, \dots, T\}$;
\item $\omega : \{1,\dots,\bar{m}\} \to \llbracket N \rrbracket $ for some $m \in \N^N$, with $1 \le m_n = |\omega^{-1}(n)| \le S_n$ for every $n \in \llbracket N \rrbracket$, $\bar{m}=\sum_{n=1}^N m_n$, $\omega(1) = 1$, and with no two cyclically consecutive values equal, i.e. $\omega(s) \neq \omega[(s + 1) \bmod \bar{m}]$ for all $s$; The set of such maps is denoted $\mathcal{O}_{m}$.
\item $l = (l_s)_{1 \le s \le \bar{m}}$ with $l_s \in \mathbb{N}$, $l_s \ge 1$, and $\sum_{s \in \omega^{-1}(n)} l_s = L_n$ for every $n \in \llbracket N \rrbracket$;
\item \emph{(cyclic anchoring)} $\mathcal{s} = \min\{a_s : \omega(s) = 1\}$, where $a_s := \big[\big(\mathcal{s} + \textstyle\sum_{r < s} l_r\big) \bmod T \big]$ is the start instant of the $s$-th subperiod; that is, $\mathcal{s}$ is the smallest start instant, in $\{1,\dots,T\}$, among the subperiods of period $1$.
\end{enumerate}
Then the map $\Psi$ sending $(\mathcal{s},\omega,l) \in \mathcal{M}$ to the schedule that assigns period $\omega(s)$ to the $l_s$ consecutive cyclic time steps starting at $a_s$, for $s = 1,\dots,\bar{m}$, is a bijection from $\mathcal{M}$ onto $\mathcal{H}$.
\end{proposition}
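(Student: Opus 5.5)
The proof proceeds in three steps: show that $\Psi$ is well defined with values in $\mathcal{H}$, construct an explicit inverse $\Phi:\mathcal{H}\to\mathcal{M}$, and check that $\Phi\circ\Psi$ and $\Psi\circ\Phi$ are identities. Throughout, time is read on the torus $\mathbb{Z}/T\mathbb{Z}$, as in the definition of $\mathcal{H}$.

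\emph{$\Psi$ lands in $\mathcal{H}$.} Fix $(\mathcal{s},\omega,l)\in\mathcal{M}$. Summing item (3) over $n$ gives $\sum_s l_s=\sum_n L_n=T$. The $\bar m$ cyclic intervals $[a_s,a_s+l_s)$ therefore tile the torus exactly once, so every $t$ receives exactly one period and $\sum_n h_{t,n}=1$. The duration constraint $\sum_t h_{t,n}=\sum_{s\in\omega^{-1}(n)}l_s=L_n$ follows directly from item (3). For the switch count, the indicator $h_{\cdot,n}$ is the union of the $m_n$ intervals indexed by $\omega^{-1}(n)$. The condition $\omega(s)\ne\omega(s+1 \bmod \bar m)$ implies that no two of these intervals are cyclically adjacent; this includes the wrap-around pair $(\bar m,1)$. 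Hence they are $m_n$ maximal runs, each contributing exactly $2$ switches, and $\sum_t|h_{t,n}-h_{t-1,n}|=2m_n\le 2S_n$. One degenerate case needs a separate word: $N=1$, $\bar m=1$. Then the single run covers the torus and gives $0\le 2S_1$ switches. This case is also compatible with the condition $\omega(1)\ne\omega(1)$ only if that condition is read as vacuous for $\bar m=1$, and I will state this convention.

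\emph{Inverse map.} Given $h\in\mathcal{H}$, define the period label $n(t)$ as the unique $n$ with $h_{t,n}=1$. Decompose the torus into maximal cyclic runs of constant $n(\cdot)$. If $N\ge 2$, each period appears (because $L_n\ge1$), so there are at least two runs. Every run has a well-defined start point $a$ with $n(a-1)\ne n(a)$. Let $\mathcal{s}$ be the smallest start instant in $\{1,\dots,T\}$ among runs labelled $1$. Enumerate the runs cyclically starting from the one beginning at $\mathcal{s}$, and set $\omega(s)$ to the label of the $s$-th run and $l_s$ to its length. Then $\omega(1)=1$. Consecutive labels differ by maximality, including across the wrap. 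The number $m_n$ of runs of label $n$ equals half the switch count, so $1\le m_n\le S_n$. The run lengths for label $n$ sum to $L_n$, and the anchoring condition (4) holds by construction, since $a_s$ recomputed from $\mathcal{s}$ and the $l_r$ is exactly the start of the $s$-th run.

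\emph{Mutual inverses, and the main obstacle.} $\Psi(\Phi(h))=h$ is immediate, because the runs are laid back in place. The delicate step is $\Phi(\Psi(\mathcal{s},\omega,l))=(\mathcal{s},\omega,l)$, i.e.\ injectivity. The worry is that a cyclic rotation of the subperiod sequence, or a merging of runs, could yield a second representation of the same schedule. I will rule this out in three parts.
\begin{enumerate}
\item The intervals of $\Psi(\mathcal{s},\omega,l)$ are exactly its maximal runs, by the non-adjacency argument above. So the multiset of runs is determined by $h$.
\item Condition (4), together with $\omega(1)=1$, forces the enumeration to start at the label-$1$ run with minimal start instant. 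This pins down the cyclic rotation.
\item Once the starting run is fixed, the order, labels and lengths are determined by traversing the torus.
\end{enumerate}
Hence any two triples with the same image coincide, and $\Psi$ is a bijection.
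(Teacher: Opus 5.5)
Your proof is correct and follows the same route as the paper. $\Psi$ lands in $\mathcal{H}$ because cyclically consecutive subperiods carry distinct labels, so each label-$n$ interval is a maximal run. Injectivity comes from recovering $\mathcal{s}$ as the smallest start of a period-$1$ run, fixing the rotation by (4), and reading off the lengths. Surjectivity is the explicit run decomposition, your $\Phi$, and your version states the maximal-run fact more explicitly than the paper does.

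One caveat concerns your side remark on $N=1$. Reading the non-adjacency condition as vacuous makes $\Psi$ well defined there, but $\Psi$ is then not injective. Every $\mathcal{s}\in\{1,\dots,T\}$ satisfies (4), so all $T$ triples map to the single all-ones schedule. The statement should therefore be restricted to $N\ge 2$, which is what both your inverse map and the paper's proof implicitly assume.
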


\begin{proof}
\begin{itemize}[leftmargin=0pt, itemindent=*]
\item \emph{$\Psi$ maps into $\mathcal{H}$.} Since consecutive segments carry distinct labels, the number of subperiods $n$ is exactly $|\omega^{-1}(n)| \le S_n$ by (2); The length of period $n$ is equal to $L_n$ by (3); Each instant $t$ is assigned a period by exactly one segment, by the cyclic construction; 

\item \emph{Injectivity.} Let $(\mathcal{s},\omega,l)$ and $(\mathcal{s}',\omega',l')$ have the same image $h$. 
\begin{itemize}[leftmargin=0pt, itemindent=*]
\item \emph{Start instant.} The start instant $\mathcal{s}$ is the smallest start instant of a subperiod of period $1$. Hence, $\mathcal{s} = \mathcal{s}' = \min\{t \in \llbracket T \rrbracket \mid h_{t,1} = 1 \text{ and } h_{(t-1) \bmod T, 1} = 0\}$.
\item \emph{Period order.} The cyclic sequence of blocks of $h$ is determined by the sequence of labels of the blocks, which is exactly $\omega$ up to a cyclic rotation. Condition (4) fixes the rotation: it forces the indexing to begin at the block of period $1$ whose start instant is smallest in $\{1,\dots,T\}$ ($\mathcal{s}=\mathcal{s}'$), and by (2) that block is unique since distinct blocks have distinct start instants. Hence $\omega = \omega'$.
\item \emph{Block lengths.} The length of each block is determined by $h$, since we have already established that $\mathcal{s} = \mathcal{s}'$ and $\omega = \omega'$, we iterate over the blocks in the same order and find that $l = l'$.
\end{itemize}

\item \emph{Surjectivity.} Given $h \in \mathcal{H}$, consider the start instant $\mathcal{s}$ of the subperiod of period $1$ that is:
\begin{equation*}
\mathcal{s} = \min\{t \in \llbracket T \rrbracket \mid h_{t,1} = 1 \text{ and } h_{(t-1) \bmod T, 1} = 0\}.
\end{equation*}
Then define $\omega$ as the sequence of labels of the blocks of $h$ starting from the block that contains $\mathcal{s}$, and define $l$ as the lengths of those blocks. By construction, $(\mathcal{s},\omega,l)$ satisfies (1)--(4) and $\Psi(\mathcal{s},\omega,l) = h$. \qedhere
\end{itemize}
\end{proof}

For the two instances used in Section~\ref{sec:num}, namely $T=24$, $S \equiv 2$ and $L = (8,16)$ or $L = (8,4,12)$, we have that $|\mathcal{H}| = 1284$ and $|\mathcal{H}| = 29016$, respectively (Table~\ref{tab:schedule:card}). Nevertheless, the number of admissible orders $\omega$ is much smaller than the number of admissible schedules $h$: for the same two instances $|\mathcal{O}| = 2$ and $|\mathcal{O}| = 24$, so that the orders account for a factor of $642$ and $1209$ less than the schedules themselves (the cardinalities were computed by brute enumeration). Here, we suggest to exploit that fact by enumerating the admissible orders $\omega$ and optimizing over the continuous relaxation of the lengths and starting instant for each order. Hence, the direct method is a hybrid: it is continuous in the lengths and starting instant, but discrete in the order. From here onward, we fix the subperiod order $\omega$ and relax the integrality of the starting instant and the lengths. The feasible set of the starting instant is $[0, T]$, and the feasible set of lengths is:
\begin{equation*}
\mathcal{L}(\omega) = \Big\{ \{l_s\}_{s \in \omega^{-1}(n), n \in N} \subseteq [1, \infty) \;\Big|\; \textstyle\sum_{s \in \omega^{-1}(n)} l_s = L_n,\forall n \in \llbracket N \rrbracket \Big\},
\end{equation*}
which factorizes by period.

\begin{lemma} \label{lem:boxed-simplex}
The Euclidean projection onto $\mathcal{Q} = \{q \in \R^{N} \mid p^{\lowerb} \le q_1 \le \cdots \le q_{N} \le p^{\upperb}\}$ can be computed in $O(N)$ operations, and the Euclidean projection onto $\mathcal{L}(\omega)$ in $O(\bar{S} \log \bar{S})$ operations.
\end{lemma}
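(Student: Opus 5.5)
The lemma has two independent parts, and I will treat them separately.

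\emph{Projection onto $\mathcal{Q}$.} I would write the problem as
\begin{equation*}
\min_{q \in \R^N} \tfrac12\|q-v\|^2 \quad \text{s.t.} \quad q_1 \le \dots \le q_N, \qquad p^{\lowerb}\le q_n\le p^{\upperb}.
\end{equation*}
This is an isotonic regression with box bounds. The plan has three steps.
\begin{enumerate}[leftmargin=0pt, itemindent=*]
\item Show that the box can be handled after the monotone fit: $\Proj_{\mathcal{Q}}(v)=\Proj_{[p^{\lowerb},p^{\upperb}]^N}\big(\Proj_{\mathcal{K}}(v)\big)$, where $\mathcal{K}$ is the monotone cone. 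Clipping preserves monotonicity, so the candidate is feasible.
\item Verify optimality through the KKT conditions. On a pooled block of the isotonic solution with value $\bar v$, clipping replaces $\bar v$ by $\pi(\bar v)$. The block-averaging characterization of isotonic regression then gives stationarity, with the box multiplier equal to $\bar v-\pi(\bar v)$ spread over the block. This has the correct sign because $\pi$ saturates on the correct side.
\item Compute the isotonic fit by the Pool-Adjacent-Violators algorithm. Each index is pushed onto a stack once and merged at most once, so the cost is $O(N)$ amortized. The clipping adds $O(N)$.
\end{enumerate}
The main obstacle is the commutation claim in step 1. I would prove it directly from the KKT system of $\mathcal{Q}$, using the fact that the clipped values on each block are constant.

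\emph{Projection onto $\mathcal{L}(\omega)$.} The set is a Cartesian product over $n\in\llbracket N\rrbracket$ of the sets
\begin{equation*}
\Big\{(l_s)_{s\in\omega^{-1}(n)} : l_s\ge1,\ \textstyle\sum_s l_s=L_n\Big\},
\end{equation*}
so the projection splits blockwise. Substituting $u_s=l_s-1$ turns each factor into a scaled simplex $\{u\ge0,\ \sum u_s=L_n-m_n\}$. This factor is nonempty because $m_n\le S_n\le L_n$. The Euclidean projection onto a scaled simplex is obtained by the sorting algorithm of Condat~\cite{condat2016fast}, also used in Corollary~\ref{cor:cp:closed-form-complexity}: sort the coordinates, find the threshold $\tau$, and set $u_s=\max(v_s-1-\tau,0)$. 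For a factor of size $m_n$ this costs $O(m_n\log m_n)$. Summing over the factors with $\sum_n m_n=\bar m\le \bar S:=\sum_n S_n$ gives the total bound $O(\bar S\log\bar S)$. This part is routine once the product structure and the translation by $1$ are noted.
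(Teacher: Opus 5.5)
Your proposal is correct and follows the paper's proof. For $\mathcal{Q}$, both use PAVA followed by componentwise clipping in $O(N)$. For $\mathcal{L}(\omega)$, both split the set period by period into (shifted) simplices and project each with Condat's sorting algorithm. Your one addition is the KKT check that clipping the isotonic fit gives $\Proj_{\mathcal{Q}}$: you reuse the isotonic multipliers and put the box multiplier $w_i-\pi(w_i)$ on each coordinate. The paper only asserts this identity, and your check usefully shows that it relies on the bounds being the same in every coordinate.
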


\begin{proof}
\begin{itemize}[leftmargin=0pt, itemindent=*]
\item \emph{Projection onto $\mathcal{Q}$.} The set $\mathcal{Q}$ is the intersection of the isotonic cone $\mathcal{C} := \{q \in \R^N \mid q_1 \le \cdots \le q_N\}$ with the box $[p^{\lowerb},p^{\upperb}]^N$. The Euclidean projection onto $\mathcal{C}$ alone is the isotonic regression of the point to be projected, and is computed exactly by the pool-adjacent-violators algorithm (PAVA) in $O(N)$ operations (\cite{de2010isotone}). For an isotonic cone intersected with a box whose bounds are the same in every coordinate, the projection is obtained from the isotonic solution by componentwise clipping to $[p^{\lowerb},p^{\upperb}]$; note that clipping preserves monotonicity, since $u \mapsto \min\{p^{\upperb}, \max\{p^{\lowerb}, u\}\}$ is nondecreasing, so the clipped vector is indeed feasible. The total cost is $O(N)$.

\item \emph{Projection onto $\mathcal{L}(\omega)$.} The set $\mathcal{L}(\omega)$ is a Cartesian product of $N$ boxed simplices, the $n$-th one carrying the $m_n$ coordinates indexed by $\omega^{-1}(n)$, lower bounded by $1$ and that sum up to $L_n$. The projection therefore decomposes across periods, and the projection onto a boxed simplex of size $m_n \leq S_n$ is computed in $O(S_n \log S_n)$ operations by the algorithm of~\cite{condat2016fast}. Summing over $n$ gives $O\big(\sum_n S_n \log S_n\big) = O(\bar{S} \log \bar{S})$. \qedhere
\end{itemize}
\end{proof}

Let $\omega$ be fixed, and let $\mathcal{s}$ and $l$ be continuous start instant and lengths, respectively. The $s$-th subperiod interval is defined as:
\begin{equation*}
I(\mathcal{s}, l)(s) = \begin{cases}
\big(\mathcal{s} + \textstyle\sum_{r<s} l_r,\; \mathcal{s} + \textstyle\sum_{r\le s} l_r\big], & \text{if } \mathcal{s} + \textstyle\sum_{r\le s} l_r \le T, \\[4pt]
\big(\mathcal{s} + \textstyle\sum_{r<s} l_r,\; T\big] \cup \big[0,\; \mathcal{s} + \textstyle\sum_{r\le s} l_r - T\big], & \text{otherwise},
\end{cases}
\end{equation*}
and the price map $G: \mathcal{Q} \times [0, T] \times \mathcal{L}(\omega) \to \R^{T}$,
\begin{equation*}
G(q, \mathcal{s}, l)_t = \textstyle\sum_{s=1}^{\bar{S}} q_{\omega(s)} \big|(t-1, t] \cap I(\mathcal{s}, l)(s)\big|.
\end{equation*}
The relaxed price at instant $t$ is thus a convex combination of the period prices, with weights given by the fraction of the hour $(t-1,t]$ spent in each subperiod. This is a meaningful object in its own right, and it is what makes the relaxation continuous in the boundary positions. 

\begin{lemma}[The price map is an elementary selection] \label{lem:g-selection}
Given the subperiod order $\omega$, the price map $G:(q,\mathcal{s},l)\mapsto \big(G(q,\mathcal{s},l)_t\big)_{t\in \llbracket T \rrbracket}$ is an elementary selection whose selection derivative can be computed in $O(T\bar{S}^2)$. 
\end{lemma}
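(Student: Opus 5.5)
The plan is to write each coordinate $G_t$ as a finite sum of products of the prices $q_{\omega(s)}$ with interval-overlap lengths, and to show that each overlap length is itself an elementary selection built from $\min$, $\max$ and affine maps. Closure of $\mathcal{S}$ under sums, products and composition~\cite[Proposition 1]{bolte2020mathematical} then gives the claim.

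Fix $\omega$ and put $b_s := \mathcal{s} + \sum_{r<s} l_r$ and $e_s := b_s + l_s$. Both are affine in $(\mathcal{s},l)$, and since $\mathcal{s}\in[0,T]$ and $\sum_s l_s = \sum_n L_n = T$, we have $e_s \le 2T$. The set $I(\mathcal{s},l)(s)$ is, up to null sets, the union of the two intervals
\[
(b_s, \min\{e_s,T\}] \quad\text{and}\quad (0, \max\{e_s - T,0\}].
\]
The second interval is empty when $e_s \le T$. I will also check that the first interval is empty when $b_s > T$: by writing $(b_s - T)^+$ appropriately, a subperiod that starts after the wrap falls entirely in the second piece. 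So I will use the symmetric formula
\[
\bigl|(t-1,t]\cap(a,c]\bigr| = \bigl(\min\{t,c\}-\max\{t-1,a\}\bigr)^+ ,
\]
applied to the shifted intervals $(b_s,e_s]$ and $(b_s-T,e_s-T]$. Each term is then a composition of $\min$, $\max$, $(\cdot)^+$ and affine maps, all of which lie in $\mathcal{S}$ with the obvious index maps in $\mathcal{I}$. Hence
\[
G_t = \sum_{s=1}^{\bar S} q_{\omega(s)}\,\bigl[\,|(t-1,t]\cap(b_s,e_s]| + |(t-1,t]\cap(b_s-T,e_s-T]|\,\bigr]
\]
is a sum of products of elements of $\mathcal{S}$, so it lies in $\mathcal{S}$. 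Each selected piece is a product of a coordinate of $q$ with an affine function of $(\mathcal{s},l)$, so every piece is a polynomial.

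For the selection derivative I will use the chain rule for selections. The partial derivative $\partial G_t/\partial q_{\omega(s)}$ is the overlap value itself. The derivative with respect to $\mathcal{s}$ and to $l_r$ is $q_{\omega(s)}$ times $\pm\mathbf 1$ indicators of which branch of $\min/\max/(\cdot)^+$ is active. The derivatives of $b_s$ and $e_s$ with respect to $l_r$ are $\mathbf 1[r<s]$ and $\mathbf 1[r\le s]$ respectively. The complexity count then goes as follows:
\begin{itemize}
\item there are $T\bar S$ pairs $(t,s)$;
\item for each pair the gradient with respect to $l$ has up to $\bar S$ nonzero entries;
\item filling the full Jacobian of $G$ therefore costs $O(T\bar S^2)$, after an $O(\bar S)$ prefix-sum computation of all $b_s$ and $e_s$.
\end{itemize}

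The main obstacle is the cyclic wrap-around. I must verify that the two-shift formula reproduces $I(\mathcal{s},l)(s)$ exactly, in particular when $b_s \ge T$, so that no mass is double-counted or lost. I also need the tie-breaking at the kinks $e_s = T$ and $c = t$ to be consistent, but this only affects the chosen index map and not the values, since all branches agree there. I also have to check that the domain constraints $l_s \ge 1$ and $\sum_s l_s = T$ guarantee at most one wrap, which is what justifies using only the shifts $0$ and $-T$.
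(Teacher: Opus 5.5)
Your proposal is correct and is essentially the paper's proof. The paper also writes each overlap $|(t-1,t]\cap I(\mathcal{s},l)(s)|$ as the sum of two positive-part terms, one for the arc and one for its shift by $-T$. It then appeals to stability of $\mathcal{S}$ under composition, sums and products, and obtains $O(T\bar S^2)$ from the same count: $T\bar S$ overlaps, each depending on up to $\bar S$ prefix-summed lengths.

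On the wrap-around check you flag: when $b_s>T$, your two-shift formula does not reproduce the literal second piece $[0,e_s-T]$ of the case definition of $I(\mathcal{s},l)(s)$. It gives the intended arc $(b_s-T,e_s-T]$, which is also what the paper's proof computes. Your bounds $0\le b_s$, $e_s\le 2T$ and $l_s\le T$ are exactly what ensure the two shifts cover that arc once, with no double counting.
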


\begin{proof}
Write the endpoints of the $s$-th subperiod, as $a_s(l)=\mathcal{s} + \textstyle\sum_{r<s} l_r$ and $b_s(l)=a_s(l)+l_s$, both affine in $l$. The actual arc of subperiod $s$ on the cycle $[0,T]$ is $(\mathcal{s}+a_s, \mathcal{s}+b_s]$ taken modulo $T$, as in $I(\mathcal{s},l)(s)$. The overlap of this arc with the $t$-th cell $(t-1,t]$, $o_{t}^s(\mathcal{s},l) = \big|(t-1,t]\cap I(\mathcal{s},l)(s)\big|$, is equal to:
\begin{equation*}
o_{t}^s(\mathcal{s},l) = (\min\{b_s,\;t\}-\max\{a_s,\;t-1\})^+ + (\min\{b_s-T,\;t\}-\max\{a_s-T,\;t-1\})^+.
\end{equation*}
where $(\cdot)^+$ denotes the positive part, i.e., $(x)^+ = \max\{0, x\}$. Each term is a composition of $\min$, $\max$, the positive part and affine maps of $(\mathcal{s},l)$, all of which are elementary selections; by stability of $\mathcal S$ under composition, sum and product~\cite[Proposition 1]{bolte2020mathematical}, so is $G(q,\mathcal{s},l)_t = \textstyle\sum_{s} q_{\omega(s)} o_{t}^s(\mathcal{s},l)$, componentwise in $t$. The overlaps $o_{t}^s$ are piecewise affine in $(\mathcal{s},l)$, so the selected pieces are affine and their derivatives are read off directly. Computing all $T\bar S$ overlaps and the associated derivatives naively costs $O(T\bar S^2)$, since each $a_s$ is a prefix sum of $l$ and each $o_{t}^s$ depends on all $l_r$ with $r \le s$. \qedhere
\end{proof}

Before stating the algorithm we must address a point on which the pointwise case and the ToU case genuinely differ. The boundedness argument of Proposition~\ref{prop:iterate-boundness} rests on the projection onto a \emph{box} being locally constant outside that box, so that a coordinate which leaves it has zero selection derivative and freezes. The projections $\Proj_{\mathcal{Q}}$ and $\Proj_{\mathcal{L}(\omega)}$ are onto polyhedra that are not boxes: outside the set, such a projection varies with the iterate in every direction except the normal ones, so its selection derivative does not vanish and no freezing occurs. The argument therefore does \emph{not} extend, and without a further device the iterates $(q, \mathcal{s}, l)$ could drift without bound. We restore boundedness at negligible cost by composing with an outer clipping onto a box.

\smallskip

Given $\omega \in \mathcal{O}$, write $u := (q, \mathcal{s}, l) \in \R^{N} \times \R \times \R^{\bar S}$ for the decision vector of the relaxed problem, and let $B := [-\Lambda, \Lambda]^{N + 1 + \bar S}$ be any box with $\Lambda$ large enough that $\mathcal{Q} \times [0,T] \times \mathcal{L}(\omega) \subseteq B$; taking $\Lambda \ge \max\{\overline{p}, T\}$ suffices. The relaxed time-of-use pricing problem is:
\begin{equation} \label{eq:rtou-safeguard}
\widetilde{F}(u) := F\Big(G\big(\Proj_{\mathcal{Q}}(\hat u_q), \Proj_{[0, T]}(\hat u_{\mathcal{s}}), \Proj_{\mathcal{L}(\omega)}(\hat u_l)\big)\Big), \tag{$\RToU$}
\end{equation}
where $\hat u := \Proj_{B}(u)$. Since $\mathcal{Q} \times [0,T] \times \mathcal{L}(\omega) \subseteq B$ and the inner projections are the identity on their own sets, every admissible triple is attained by some $u \in B$, so maximizing~\eqref{eq:rtou-safeguard} over $\R^{N+1+\bar S}$ is equivalent to maximizing $F(G(\cdot))$ over $\mathcal{Q} \times [0,T] \times \mathcal{L}(\omega)$.

\begin{corollary} \label{corollary:relaxed-tou-selection}
Given the subperiod order $\omega \in \mathcal{O}$, the relaxed time-of-use pricing problem \eqref{eq:rtou-safeguard} is equivalent to maximizing an elementary selection whose selection derivative can be computed in $O(T(\bar{S}^2 + \log T))$.
\end{corollary}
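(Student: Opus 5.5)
The plan is to split the claim into an equivalence statement and a selection/complexity statement, and to treat \eqref{eq:rtou-safeguard} as a composition of four layers: the outer box clipping $\Proj_B$, the three inner projections, the price map $G$, and the hyper-objective $F$.

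\emph{Equivalence.} This is the argument given just before the statement. Because $\mathcal{Q}\times[0,T]\times\mathcal{L}(\omega)\subseteq B$ and each inner projection is the identity on its own set, the image of $u\mapsto(\Proj_{\mathcal{Q}}(\hat u_q),\Proj_{[0,T]}(\hat u_{\mathcal{s}}),\Proj_{\mathcal{L}(\omega)}(\hat u_l))$ is exactly that product set. Consequently $\sup_u \widetilde F(u)=\sup F(G(q,\mathcal{s},l))$ over admissible triples, and maximizers correspond through this map.

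\emph{Elementary selection.} I will establish the property for each layer and then invoke stability under composition \cite[Proposition 1]{bolte2020mathematical}.
\begin{enumerate}
\item $\Proj_B$ and $\Proj_{[0,T]}$ are coordinatewise clippings, hence compositions of $\min$ and $\max$.
\item $\Proj_{\mathcal{Q}}$ is PAVA followed by clipping (Lemma~\ref{lem:boxed-simplex}). It is piecewise affine: on each pooling pattern, isotonic regression averages consecutive blocks. The region where a given pattern is selected is cut out by finitely many linear inequalities between block averages, so the index map belongs to $\mathcal{I}$.
\item $\Proj_{\mathcal{L}(\omega)}$ is a product of boxed-simplex projections. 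It has the same active-set structure already used for $x_k^\star$ in Theorem~\ref{thm:pointwise-selection}: an affine piece per active set, a multiplier given by an explicit affine formula, and polyhedral regions.
\item $G$ is an elementary selection by Lemma~\ref{lem:g-selection}.
\item $F$ is an elementary selection by Theorem~\ref{thm:pointwise-selection}.
\end{enumerate}
Composing these yields an elementary selection. Its selection derivative is the chain-rule product of the selected pieces' Jacobians.

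\emph{Complexity.} I will count the cost of each layer in the chain rule and use $N\le\bar S$, $\bar S\le T$.
\begin{enumerate}
\item Forward pass: $\Proj_B$ costs $O(N+\bar S)$, $\Proj_{\mathcal{Q}}$ costs $O(N)$, $\Proj_{\mathcal{L}(\omega)}$ costs $O(\bar S\log\bar S)$, the $T\bar S$ overlaps of $G$ cost $O(T\bar S^2)$ naively, and $F_k$ costs $O(T\log T)$ per client.
\item Backward pass: first compute $g=\widehat\nabla F(p)\in\R^T$. Then form $J_G^\top g$ using the explicit overlap derivatives within the $O(T\bar S^2)$ budget of Lemma~\ref{lem:g-selection}. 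Next apply the transposed Jacobians of the inner projections. For $\Proj_{\mathcal{Q}}$ this means averaging over pools, at cost $O(N)$. For the boxed simplex it is a rank-one correction of a diagonal $0/1$ matrix on the free set, at cost $O(\bar S)$. Finally apply the $0/1$ diagonal of $\Proj_B$.
\end{enumerate}
Summing the two passes gives $O(T\bar S^2+T\log T)=O(T(\bar S^2+\log T))$ per client.

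The main obstacle is showing rigorously that $\Proj_{\mathcal{Q}}$ and $\Proj_{\mathcal{L}(\omega)}$ are elementary selections in the precise sense of Definition~\ref{def:elementary-selection}, meaning that their selecting index maps are defined by finitely many inequalities between elementary functions. My first approach is to reuse the phase construction from the proof of Theorem~\ref{thm:pointwise-selection}. For $\mathcal{L}(\omega)$, take phases $(L,I)$ with the multiplier given by its explicit affine formula. For $\mathcal{Q}$, take phases indexed by the PAVA pooling pattern together with the clipping state of each pool. In both cases, ties on region boundaries are broken lexicographically; continuity of the projections makes this tie-breaking immaterial for the values.
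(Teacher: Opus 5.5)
Your proposal is correct and follows essentially the paper's route: you show that each layer ($\Proj_B$, the three inner polyhedral projections, $G$ via Lemma~\ref{lem:g-selection}, and $F$ via Theorem~\ref{thm:pointwise-selection}) lies in $\mathcal{S}$, conclude by closure under composition \cite[Proposition 1]{bolte2020mathematical}, and take the equivalence from the discussion preceding the statement. Your version is more complete than the paper's short proof, which omits the outer clipping $\Proj_B$ and does not carry out the forward/backward cost count behind $O(T(\bar S^2+\log T))$. One small imprecision: the PAVA pooling regions need within-block prefix-average inequalities in addition to inequalities between block averages, but the piecewise affinity of polyhedral projections that the paper invokes already covers this.
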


\begin{proof}
The relaxed objective is $(q,\mathcal{s},l)\mapsto F\!\big(G(\Proj_{\mathcal{Q}}(q), \Proj_{[0, T]}(\mathcal{s}), \Proj_{\mathcal{L}(\omega)}(l))\big)$. Its three constituents lie in $\mathcal S$: the projections $\Proj_{\mathcal{Q}}$, $\Proj_{[0, T]}$, and $\Proj_{\mathcal{L}(\omega)}$ are projections onto polyhedra, hence piecewise affine and elementary selections, as in Theorem~\ref{thm:cp:closed-form}; the price map $G$ is an elementary selection (Lemma~\ref{lem:g-selection}); and the pointwise objective $F$ is an elementary selection in $p$ (Theorem~\ref{thm:pointwise-selection}). Since $\mathcal S$ is closed under composition~\cite[Proposition 1]{bolte2020mathematical}, the relaxed objective belongs to $\mathcal S$.\qedhere
\end{proof}

\begin{proposition} \label{prop:iterate-boundness-tou}
The iterate sequence $(u_i)_{i \ge 0}$ generated by Algorithm~\ref{alg:sgd-tou} applied to the objective~\eqref{eq:rtou-safeguard} is bounded for every initial point.
\end{proposition}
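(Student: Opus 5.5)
The plan is to copy the three-step argument of Proposition~\ref{prop:iterate-boundness}, with the outer box $B=[-\Lambda,\Lambda]^{N+1+\bar S}$ and the clipping $\Proj_B$ playing the role that $[p^{\lowerb},p^{\upperb}]^T$ and its clipping played there. The key structural fact is that $\Proj_B$ acts coordinatewise, $\Proj_B(u)_j=\pi_\Lambda(u_j)$ with $\pi_\Lambda(v)=\min\{\Lambda,\max\{-\Lambda,v\}\}$. I will write the objective~\eqref{eq:rtou-safeguard} as $\widetilde F=\Phi\circ\Proj_B$, where
\begin{equation*}
\Phi(\hat u):=F\big(G(\Proj_{\mathcal{Q}}(\hat u_q),\Proj_{[0,T]}(\hat u_{\mathcal{s}}),\Proj_{\mathcal{L}(\omega)}(\hat u_l))\big).
\end{equation*}
By Corollary~\ref{corollary:relaxed-tou-selection}, $\Phi$ is an elementary selection. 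Algorithm~\ref{alg:sgd-tou} updates $u_{i+1}=u_i+\eta_i g_i$, where $g_i$ is a weighted sum over the batch of selection derivatives of $\widetilde F_k=\Phi_k\circ\Proj_B$.

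\emph{Step 1 (vanishing coordinates outside $B$).} Using the chain rule for selection derivatives (as in Step 1 of Proposition~\ref{prop:iterate-boundness}), each selection derivative has the form $\widehat\nabla_\sigma\widetilde F_k(u)=D\Proj_B(u)^\top\,\widehat\nabla_\sigma\Phi_k(\Proj_B(u))$. Because $\Proj_B$ is separable, $D\Proj_B(u)$ is diagonal with entries $\widehat\nabla_\sigma\pi_\Lambda(u_j)$. Let $O_j:=\{u : u_j\notin[-\Lambda,\Lambda]\}$. For $u\in O_j$, the map $\pi_\Lambda$ is locally constant near $u_j$, so the $j$-th diagonal entry is $0$ whatever tie-breaking is used at $\pm\Lambda$. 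Hence $(g_i)_j=0$ whenever $u_i\in O_j$, independently of the batch and of the inner projections onto $\mathcal{Q}$ and $\mathcal{L}(\omega)$.

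This is the point where the inner projections could cause trouble, since they are not separable and their derivatives mix coordinates. I will stress that the argument only needs the \emph{last} factor of the chain rule, the diagonal Jacobian of $\Proj_B$, to kill the $j$-th component. Mixing inside $\Phi$ happens upstream of this factor and cannot reintroduce a nonzero $j$-th entry. I expect this to be the main point to verify carefully, namely that the selection derivative of the composite is indeed the product in this order. I will check it against the product/composition rule of \cite[Proposition 1]{bolte2020mathematical}.

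\emph{Step 2 (freezing).} If $u_{i_0}\in O_j$, then $u_{i_0+1,j}=u_{i_0,j}$, so $u_{i_0+1}\in O_j$. By induction the $j$-th coordinate is constant for all $i\ge i_0$.

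\emph{Step 3 (conclusion).} For each coordinate $j$, there are two cases. Either $u_{i,j}\in[-\Lambda,\Lambda]$ for all $i$, or the coordinate eventually freezes at the value $u_{i_0,j}$. In both cases the coordinate is bounded, by $\max\{\Lambda,|u_{i_0,j}|\}$. This holds for every $j$ and every realization of the batches, so with finitely many coordinates the sequence $(u_i)$ is bounded for every initial point $u_0$.
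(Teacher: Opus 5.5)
Your proposal is correct and follows the paper's proof: the paper notes that the outermost operation in~\eqref{eq:rtou-safeguard} is the coordinatewise clipping $\Proj_B$, which is locally constant strictly outside $B$, and then invokes the three steps of Proposition~\ref{prop:iterate-boundness} verbatim with $\Proj_{[p^{\lowerb},p^{\upperb}]}$ replaced by $\Proj_B$. You write out those three steps (vanishing coordinate, freezing, coordinatewise boundedness) explicitly, and your observation that only the diagonal Jacobian of $\Proj_B$ has to zero out the $j$-th component, however the non-separable inner projections mix coordinates, is exactly why the argument works here but would fail for $\Proj_{\mathcal{Q}}$ and $\Proj_{\mathcal{L}(\omega)}$ alone.
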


\begin{proof}
The outermost operation applied to the iterate in~\eqref{eq:rtou-safeguard} is the clipping $\Proj_B$ onto a box, which acts coordinatewise and is locally constant at every point lying strictly outside $B$ in the corresponding coordinate. The three steps of the proof of Proposition~\ref{prop:iterate-boundness} therefore apply verbatim, with $\Proj_{[\underline{p},\overline{p}]}$ replaced by $\Proj_B$: a coordinate that leaves $B$ has zero selection derivative and is frozen thereafter, and every coordinate is consequently bounded. \qedhere
\end{proof}

The safeguard makes the guarantee available at the price of an absorbing set, and it is only informative at accumulation points lying in $B$. Since $B$ can be taken as large as desired without affecting the value of the problem, this is a mild price; but the safeguard must actually be implemented, and its activation monitored, for Corollary~\ref{cor:sgd-convergence-tou} to apply to a given run.

\begin{algorithm}[!ht]
\caption{Stochastic gradient ascent for time-of-use pricing}
\begin{algorithmic}[1]
\Require subperiod order $\omega \in \mathcal{O}$; initial start instant $\mathcal{s}$, lengths $l$ and level prices $q_0$; safeguard box $B$; price bounds $p^{\lowerb},p^{\upperb}$; client data; step sizes $(\eta_i)_{i\ge 0}$; batch size $b$; number of iterations $N_{\mathrm{it}}$; 
\Ensure an iterate sequence whose accumulation points are almost surely Clarke critical for the objective~\eqref{eq:rtou-safeguard}
\For{$i = 0,\dots,N_{\mathrm{it}}-1$}
\State sample a mini-batch $B_i \subseteq \llbracket K \rrbracket$ uniformly without replacement, $|B_i| = b$
\State $g \gets 0$
\State $\hat{p} \gets G\big(\Proj_{\mathcal{Q}}(q), \Proj_{[0, T]}(\mathcal{s}), \Proj_{\mathcal{L}(\omega)}(l)\big)$
\For{$k \in B_i$}
\State compute the active sets $(L_k,I_k,U_k)$ of $x_k^\star(p)$ and the multiplier $\gamma_k^x(p)$ 
\State $x_k^\star \gets \Proj_{\mathcal{X}_k}(x_k^0 - \alpha_k \hat{p})$;\quad
$y_k^\star \gets \Proj_{[0,1]}\!\big(\beta_k(\Vill^0_k - \Vill_k({\hat{p}}))\big)$
\State compute the phase $\sigma_k(p)$ and the selection derivative $\widehat\nabla_{\sigma_k} \widetilde{F}_k(p)$
\State $g \gets g + b^{-1} K w_k \widehat\nabla_{\sigma_k} \widetilde{F}_k(p)$
\EndFor
\State $\mathcal{s} \gets \mathcal{s} + \eta_i g_{\mathcal{s}}$, $l \gets l + \eta_i g_{l}$, $q \gets q + \eta_i g_q$ 
\EndFor
\State \Return $\mathcal{s}, l, q$
\end{algorithmic}
\label{alg:sgd-tou}
\end{algorithm}

\begin{corollary} \label{cor:sgd-convergence-tou}
Fix $\omega \in \mathcal{O}$. Let for all $i$, $\eta_i = c r_i$, and $r_i = o(1 / \log{i})$. Then, for almost every $c \in (0,1]$, almost surely every accumulation point of the sequence generated by Algorithm~\ref{alg:sgd-tou} is Clarke critical for the objective~\eqref{eq:rtou-safeguard}.
\end{corollary}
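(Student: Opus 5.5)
The plan mirrors the proof of Corollary~\ref{corollary:sgd-convergence}. We invoke \cite[Theorem 4]{bolte2020mathematical}, which has two hypotheses. First, the objective must be an elementary selection, and we must have a selection derivative that the algorithm actually evaluates. Second, the iterates must stay bounded almost surely. Once both are checked, the theorem gives the conclusion directly: for almost every $c \in (0,1]$ and step sizes $\eta_i = c r_i$ with $r_i = o(1/\log i)$, almost surely every accumulation point is Clarke critical for the objective.

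\textbf{Step 1: elementary-selection structure.} Fix the order $\omega$. Corollary~\ref{corollary:relaxed-tou-selection} shows that $(q,\mathcal{s},l)\mapsto F(G(\Proj_{\mathcal{Q}}(q),\Proj_{[0,T]}(\mathcal{s}),\Proj_{\mathcal{L}(\omega)}(l)))$ lies in $\mathcal{S}$. We then precompose with the outer clipping $\Proj_B$. This map is coordinatewise $\min/\max$ with constants, hence in $\mathcal{S}$. By stability of $\mathcal{S}$ under composition \cite[Proposition 1]{bolte2020mathematical}, the objective $\widetilde F$ in~\eqref{eq:rtou-safeguard} is an elementary selection.

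Each summand $\widetilde F_k$ is an elementary selection by the same argument, with $F_k$ in place of $F$ (Theorem~\ref{thm:pointwise-selection}). The mini-batch direction $g=\sum_{k\in B_i} b^{-1}Kw_k\widehat\nabla_{\sigma_k}\widetilde F_k$ is therefore an unbiased estimator of a selection derivative of $\widetilde F$. Here the batch is sampled uniformly, so each $k$ is included with probability $b/K$. This matches the stochastic setting of \cite[Theorem 4]{bolte2020mathematical}, with finite-sum structure and uniform sampling, exactly as in Corollary~\ref{corollary:sgd-convergence}.

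\textbf{Step 2: boundedness.} Proposition~\ref{prop:iterate-boundness-tou} provides boundedness of $(u_i)$ for every initial point and every realization of the batches, so in particular almost surely.

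\textbf{Main obstacle.} The delicate point is checking that the chain-rule product the algorithm computes really is the selection derivative associated with one fixed elementary-selection representation. That product runs through $\Proj_B$, the three inner projections, $G$, and the phase selection $\sigma_k$ of $F_k$. If it is, \cite[Theorem 4]{bolte2020mathematical} applies verbatim, since its almost-everywhere-in-$c$ clause handles the measure-zero set where selection derivatives and Clarke subgradients differ.

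To settle this, we fix once and for all a selection for each building block:
\begin{itemize}
\item the lexicographic tie-breaking for phases of $F_k$;
\item a fixed convention for $\min,\max,(\cdot)^+$ in the overlaps $o^s_t$ of Lemma~\ref{lem:g-selection};
\item the active-set pieces of the polyhedral projections $\Proj_{\mathcal Q}$ and $\Proj_{\mathcal L(\omega)}$, which are piecewise affine and come with an index map;
\item the box-clipping selection used in Proposition~\ref{prop:iterate-boundness}.
\end{itemize}
We then note that \cite{bolte2020mathematical} defines the selection derivative of a composition as the product of the component selection derivatives. Algorithm~\ref{alg:sgd-tou} computes exactly this product.

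Finally, we remark that, as discussed after Proposition~\ref{prop:iterate-boundness-tou}, the criticality statement refers to $\widetilde F$ including the safeguard. It therefore transfers to $F\circ G$ on $\mathcal Q\times[0,T]\times\mathcal L(\omega)$ only at accumulation points in the interior of $B$.
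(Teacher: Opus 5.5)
Your proposal is correct and is essentially the argument the paper intends. The paper gives no separate proof: it is the proof of Corollary~\ref{corollary:sgd-convergence}, with Corollary~\ref{corollary:relaxed-tou-selection} composed with the clipping $\Proj_B$ in place of Theorem~\ref{thm:pointwise-selection}, and Proposition~\ref{prop:iterate-boundness-tou} in place of Proposition~\ref{prop:iterate-boundness}; your handling of the chain-rule selection derivative and of the mini-batch sampling is, if anything, more careful than the paper's.

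One caveat, inherited from the statement rather than from your reasoning: the conclusion also needs non-summable steps, $\sum_i r_i = +\infty$. With summable steps and bounded selection derivatives, the iterates simply converge to a generically non-critical point near $u_0$. I believe this hypothesis is among those of \cite[Theorem 4]{bolte2020mathematical}, and in any case the conclusion fails without it, so you should state it explicitly when you invoke that theorem.
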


\section{Numerical experiments}
\label{sec:num}

We apply our model and solution methods to a realistic case study inspired by the French retail electricity market. The numerical experiments pursue four goals: (i) to validate the gradient-based solver against a commercial solver on the pointwise problem; (ii) to quantify how the retailer's prices, the induced consumption shifts, and client participation respond to the flexibility of the clients; (iii) to measure the \emph{price of representability} (PoR) of Definition~\ref{def:por}, that is, the profit the retailer forgoes when unconstrained pointwise prices are restricted to admissible time-of-use (ToU) schedules; and (iv) to compare the ToU methods that remain tractable at this scale, namely the rounding method of Section~\ref{subsubsec:rounding}, the direct method of Section~\ref{subsubsec:direct} and, in the configurations where it terminates, the indirect method of Section~\ref{subsubsec:indirect}. As established in Remark~\ref{rem:enumeration-cost}, the complete enumeration benchmark of Section~\ref{subsubsec:enumeration} is out of reach for the instances considered here, so the optimal ToU profit $F^\star_{\mathrm{ToU}}$ entering Definition~\ref{def:por} is replaced by the best value found by those three methods; every price of representability reported below is therefore an \emph{upper estimate} of the true one, and the comparison between methods is a comparison of local optima of a nonconvex problem rather than of certified optima.

\subsection{Experimental setup}
We describe in turn the dataset, the model parameters, the schedule constraints, and the computational environment.

\subsubsection{Dataset}
Data were generated by the SMACH simulator of EDF~R\&D~\cite{Huraux}, which simulates $5000$ residential households (clients). Each household is described by numerical attributes: number of occupants, average consumed power, subscribed power (PS), and an electric domestic-hot-water indicator (ECS); and categorical attributes: dwelling surface, dwelling type, electric heating, air conditioning, and electric-vehicle ownership. The numerical attributes are standardized to zero mean and unit variance and the categorical attributes are one-hot encoded, after which the feature vectors are partitioned by $k$-means into $K\in\{10, 50, 100\}$ clusters. For each cluster, the client closest (in Euclidean distance) to the centroid is retained as the representative load profile $x^0_k$, and the cluster cardinality, normalized so that $\sum_{k}w_k=1$, is used as the weight $w_k$; the retailer's objective is therefore the population-weighted \emph{average} yearly profit per representative client. Because the weights sum to one, profits obtained for different values of $K$ are directly comparable: they all estimate the same population average, with an accuracy that improves as the clustering becomes finer. The experiments reported below use $K \in \{10, 50, 100\}$, which is the range over which the indirect method remains solvable within the time budget. Figure~\ref{fig:ref-consumption} displays the resulting weighted-average reference load, which exhibits the two consumption peaks typical of a French residential day, around $8$--$10$, and a nightly trough between $22$ and $24$. 

\begin{figure}[!htbp]
\centering
\begin{tikzpicture}
\begin{axis}[
width=10cm, height=4.5cm, 
xlabel={Instant $t$}, ylabel={{$x^0$}},
grid=major, ymin=0.47, ymax=.9,
xtick={1,5,10,15,20,24},
minor xtick={2,3,4,6,7,8,9,11,12,13,14,16,17,18,19,21,22,23},
grid=major,
xminorgrids=true,
minor grid style={very thin, gray!15},
]
\addplot[myblue, mark=*] table[
col sep=comma,
x=Hour,
y=Mean5000,
] {experiments/consumption_summary.csv};
\end{axis}
\end{tikzpicture}
\caption{The weighted-average reference consumption $\sum_{k=1}^K x^0_k w_k$ (in kWh per hour).}
\label{fig:ref-consumption}
\end{figure}

\subsubsection{Parameters}
The consumption data were aggregated into one-hour intervals over a full day, so $T=24$. All monetary quantities are expressed in $\EUR$/kWh for unit prices and $\EUR$ for bills and profit per year ($365.25$ days). For simplicity, we considered a linear cost function $\Cost$: 
\begin{equation*}
\Cost(x) = \textstyle\sum_{t=1}^T c_t x_{t},
\end{equation*}
where the marginal cost $c_t$ was computed from \href{https://www.epexspot.com/en/market-results}{spot prices}, transmission tariffs, and French taxes. 

\begin{figure}[!htbp]
\centering
\begin{tikzpicture}
\begin{axis}[width=10cm, height=4.5cm, xlabel={Instant $t$}, ylabel={$c_t$}, grid=major, ymin=0.1, ymax=0.2,
xtick={1,5,10,15,20,24},
minor xtick={2,3,4,6,7,8,9,11,12,13,14,16,17,18,19,21,22,23},
grid=major,
xminorgrids=true,
minor grid style={very thin, gray!15},]
\addplot[myblack, thick, mark=*] coordinates {(1, 0.148) (2, 0.134) (3, 0.125) (4, 0.117) (5, 0.113) (6, 0.118) (7, 0.133) (8, 0.158) (9, 0.171) (10, 0.162) (11, 0.149) (12, 0.14) (13, 0.132) (14, 0.123) (15, 0.122) (16, 0.127) (17, 0.138) (18, 0.155) (19, 0.173) (20, 0.188) (21, 0.184) (22, 0.177) (23, 0.174) (24, 0.16) }; 
\end{axis}
\end{tikzpicture}
\caption{The cost profile $c_t$ in $\EUR$/kWh.}
\label{fig:punc:prices}
\end{figure}

\smallskip

The value $\Vill^0_k$ of the outside offer was set to be equal to $\Vill_k((1+\xi)c)$ (pointwise prices equal to costs inflated by a margin~$\xi$). Client flexibility is encoded through the bounds $x^{\lowerb}_k=(1-f_k) x^0_k$ and $x^{\upperb}_k=(1+f_k) x^0_k$, where $f_k$ is the fraction of consumption movable at each hour. The admissible price range is $[p^{\lowerb},p^{\upperb}]=[0.05, 0.35]\EUR$/kWh. Unless stated otherwise we set, for every client $k$, $\xi=0.05$, $\beta_k=365.25/100$, and $f_k=0.1$, and we vary a single common flexibility level $\alpha_k\equiv\alpha\in\{0.1,1,10\}$. Thus the outside offer is $5\%$ above cost, each client can shift up to $10\%$ of its hourly consumption, and $100\EUR$ is the width of the participation transition region per year: a client's participation level $y_k$ rises from $0$ to $1$ as the accepted bill undercuts the outside option $\Vill^0_k$ by up to $100\EUR$ (cf. Figure~\ref{fig:client-choice}). 

\subsubsection{Schedule constraints}
\label{subsubsec:num:schedule}
We consider two scheduling models that instantiate the set $\mathcal{H}$ of Section~\ref{sec:mod}. Both mirror the structure of the regulated French \emph{heures pleines / heures creuses} tariff.
\begin{itemize}[leftmargin=0pt, itemindent=*]
\item \emph{Two-period.} Two periods (off-peak, on-peak), each split into at most $2$ subperiods; $8$ off-peak hours and $16$ on-peak hours: $N=2, S=2, L_1=8, L_2=16$.
\item \emph{Three-period.} Three periods (off-peak, mid-peak, on-peak), each split into at most $2$ subperiods; $8$ off-peak, $4$ mid-peak, and $12$ on-peak hours: $N=3, S=2, L_1=8, L_2=4, L_3=12$.
\end{itemize}
The cardinalities of the two schedule sets are reported in Table~\ref{tab:schedule:card}: the extra period enlarges the schedule space by a factor $29016/1284 \approx 22.6$, even though only one additional block boundary is introduced. This combinatorial growth motivates the algorithmic choices of Section~\ref{sec:algorithmic}, especially when considering the multi-day scenario (cf. Appendix~\ref{app:multi-day}).

\begin{table}[!htbp]
\centering
\begin{tabular}{c c c}
\toprule
& Two-period schedules & Three-period schedules \\
\midrule
Cardinality & 1284 & 29016 \\
\bottomrule
\end{tabular}
\caption{Cardinality of the schedule sets for the two-period and three-period models.}
\label{tab:schedule:card}
\end{table}

\subsubsection{Computational setup}
The experiments were conducted on a 13th~Gen Intel\textregistered{} Core\texttrademark{} i5-13600H CPU (up to 4.8~GHz) with 32~GB of RAM. The single-level mixed integer program~\eqref{model:cp:kkt-conditions} was formulated in AMPL and solved with \textsc{Knitro} on the NEOS Server; being a local solver applied to a nonconvex mixed-integer quadratically constrained program, it returns a feasible incumbent and a local optimum rather than a global certificate. The direct, closed-form formulation (the maximization of $F(p)$ from Theorem~\ref{thm:pointwise-selection}) was implemented in Python using PyTorch and optimized with stochastic gradient ascent. For reproducibility we report the solver settings. The recursion was run from the initial point $p_0=0.12$ with learning rate~$5 \times 10^{-5}$ for the pointwise and $5 \times 10^{-6}$ for the ToU version, with a scaling of $10^{6}$ for the gradient with respect to the continuous start instant and the lengths, mini-batch size $b=10$, and $N_{\text{it}}=10^{5}$ iterations; unless stated otherwise a single start was used. The rounding step of Section~\ref{subsec:tou-pricing} was solved as a mixed-integer quadratic program using \textsc{CPLEX} with default parameters.

\subsection{Pointwise pricing}
\label{subsec:num:pointwise}
In the current section, we treat the pointwise model (cf. Section~\ref{subsec:pointwise-pricing}). First, to validate the direct method by comparison with the indirect method, we report the relative profit difference and the CPU time. Then, we analyze how the retailer's prices, the induced consumption shifts, and client participation respond to the flexibility of the clients.

\subsubsection{Profit}
\label{subsubsec:num:pointwise-profit}

Table~\ref{tab:pointwise:profit} compares the direct method that implements Algorithm~\ref{alg:sgd-pointwise} with the indirect method that solves the single-level reformulation of Corollary~\ref{cor:cp:kkt-conditions} using \textsc{Knitro}. The purpose of the comparison is validation: since neither route certifies global optimality, agreement between two methods that treat the optimization problem from different perspectives (closed-form differentations, and MINLP) is an important sign that both approaches are reliable.

\smallskip

The two methods agree closely: over the nine configurations the relative gap $\Delta$ lies between $-0.13\%$ and $0.29\%$, i.e. at most three cents on a profit of about $10\EUR$ per representative client. The results read as both methods reaching comparable local optima of a nonconvex objective rather than as a systematic advantage of either. Two regularities are visible across the table. First, profit decreases with the number of representative clients, from $10.26\EUR$ at $K=10$ to $9.98\EUR$ at $K=100$ for $\alpha=0.1$: a coarse clustering concentrates the population on a few profiles that the retailer can target precisely, whereas a finer one exposes the heterogeneity that a single tariff must serve simultaneously. Second, and more surprisingly at first sight, profit decreases as clients become more flexible, from $10.26\EUR$ to $10.05\EUR$ at $K=10$ and from $9.98\EUR$ to $9.74\EUR$ at $K=100$ when $\alpha$ moves from $0.1$ to $10$. Flexibility is valuable to the retailer only through the load it displaces towards cheap hours, but it is equally available to the client under the outside option, whose value $\Vill^0_k = \Vill_k((1+\xi)c)$ decreases with $\alpha$, i.e. becomes more attractive to the client; the second effect dominates here, so flexibility mostly strengthens the client's fallback position. 

\begin{table}[!htbp]
\centering
\begin{tabular}{c | cc | cc | cc}
\toprule
\multicolumn{1}{c}{} & \multicolumn{2}{c}{10} & \multicolumn{2}{c}{50} & \multicolumn{2}{c}{100} \\
\cmidrule(lr){1-1} \cmidrule(lr){2-3} \cmidrule(lr){4-5} \cmidrule(lr){6-7}
$\alpha$ & Direct & $\Delta$ & Direct & $\Delta$ & Direct & $\Delta$ \\
\midrule
0.1 & 10.26 & 0.09\% & 10.18 & -0.13\% & 9.98 & 0.11\% \\
1 & 10.20 & 0.16\% & 10.08 & 0.07\% & 9.88 & 0.29\% \\
10 & 10.05 & 0.08\% & 9.87 & 0.16\% & 9.74 & 0.07\% \\
\bottomrule
\end{tabular}
\caption{Pointwise pricing model profit comparison using the direct and the indirect methods. For each client count: \emph{Profit} is the weighted-average profit per representative client ($\EUR$); $\Delta$ is the relative profit difference versus the indirect method.}
\label{tab:pointwise:profit}
\end{table}

\subsubsection{Computational time}
Table~\ref{tab:pointwise:cpu} reports CPU time (hh:mm:ss). The two methods scale in opposite ways. The indirect method solves a mixed-integer program whose size grows linearly in $K$: the reformulation~\eqref{model:cp:kkt-conditions} carries $2TK$ binary variables for the consumption bounds and $2K$ for the participation bounds, so at $T=24$ the model grows from $500$ to $5000$ binaries between $K=10$ and $K=100$, which makes the branch-and-bound more demanding. The direct method, by contrast, performs a fixed number $N_{\mathrm{it}}$ of iterations whose cost is $O(bT\log T)$ by Corollary~\ref{cor:cp:closed-form-complexity}, hence independent of $K$ once the mini-batch size $b$ is fixed.

\smallskip

The indirect method is essentially instantaneous at $K=10$ ($1$ to $11$ seconds) but requires between $8$ and $20$ minutes at $K=100$, a growth of three orders of magnitude for a tenfold increase in the number of clients. The direct method stays between $2$ and $9$ minutes throughout, with no trend in $K$. Since the retailer's interest lies in fine clusterings, the flat profile of the direct method is the operationally relevant property, and it is what makes the closed-form route the method of choice for such models.

\begin{table}[!htbp]
\centering
\begin{tabular}{c | cc | cc | cc}
\toprule
\multicolumn{1}{c}{} & \multicolumn{2}{c}{10} & \multicolumn{2}{c}{50} & \multicolumn{2}{c}{100} \\
\cmidrule{1-1} \cmidrule(lr){2-3} \cmidrule(lr){4-5} \cmidrule(lr){6-7}
$\alpha$ & Direct & Indirect & Direct & Indirect & Direct & Indirect \\
\midrule
0.1 & 02:19 & 00:01 & 05:18 & 01:51 & 09:00 & 16:06 \\
1 & 02:19 & 00:11 & 03:03 & 03:54 & 02:34 & 20:13 \\
10 & 07:56 & 00:01 & 02:05 & 03:49 & 02:55 & 08:32 \\
\bottomrule
\end{tabular}
\caption{Pointwise pricing model CPU time comparison using the direct and the indirect methods.}
\label{tab:pointwise:cpu}
\end{table}

\subsubsection{Prices}

Figure~\ref{fig:pointwise:prices} shows the hourly pointwise prices for $K=50$ clients, obtained by both methods and for the three flexibility levels. At low flexibility ($\alpha=0.1$) the price profile is highly variable and tracks the cost profile of Figure~\ref{fig:punc:prices}. As $\alpha$ increases to $1$ and then $10$ the profile flattens markedly. At several off-peak hours the optimal price falls below the marginal cost $c_t$ of Figure~\ref{fig:punc:prices}: the retailer uses loss-leading off-peak prices to induce load shifting and recovers margin at peak, which is possible precisely because the daily energy total is conserved by the constraint defining $\mathcal{X}_k$. The direct and the indirect methods yield visually similar profiles, hour by hour and at all three flexibility levels, which is the qualitative counterpart of the profit gap of Table~\ref{tab:pointwise:profit}: the two methods do not merely reach the same value, they reach nearly the same price vector.

\begin{figure}[!htbp]
\centering
\resizebox{\textwidth}{!}{%
\begin{tabular}{ccc}
\begin{tikzpicture}
\begin{axis}[width=5cm, height=4.5cm, grid=major, ymin=0.06, ymax=0.28,
xtick={1,5,10,15,20,24},
minor xtick={2,3,4,6,7,8,9,11,12,13,14,16,17,18,19,21,22,23},
grid=major,
xminorgrids=true,
minor grid style={very thin, gray!15},]
\addplot[myblue, thick, mark=*] table[
col sep=comma,
x=Hour,
y=Prices_Direct_0.1_50,
] {experiments/pointwise_prices_summary.csv};
\end{axis}
\end{tikzpicture} &

\begin{tikzpicture}
\begin{axis}[width=5cm, height=4.5cm, grid=major, ymin=0.06, ymax=0.28,
xtick={1,5,10,15,20,24},
minor xtick={2,3,4,6,7,8,9,11,12,13,14,16,17,18,19,21,22,23},
grid=major,
xminorgrids=true,
minor grid style={very thin, gray!15},]
\addplot[myred, thick, mark=*] table[
col sep=comma,
x=Hour,
y=Prices_Direct_1.0_50,
] {experiments/pointwise_prices_summary.csv};
\end{axis}
\end{tikzpicture} &

\begin{tikzpicture}
\begin{axis}[width=5cm, height=4.5cm, grid=major, ymin=0.06, ymax=0.28,
xtick={1,5,10,15,20,24},
minor xtick={2,3,4,6,7,8,9,11,12,13,14,16,17,18,19,21,22,23},
grid=major,
xminorgrids=true,
minor grid style={very thin, gray!15},]
\addplot[mygreen, thick, mark=*] table[
col sep=comma,
x=Hour,
y=Prices_Direct_10.0_50,
] {experiments/pointwise_prices_summary.csv};
\end{axis}
\end{tikzpicture} \\

\begin{tikzpicture}
\begin{axis}[width=5cm, height=4.5cm, xlabel={Instant $t$}, grid=major, ymin=0.06, ymax=0.28,
xtick={1,5,10,15,20,24},
minor xtick={2,3,4,6,7,8,9,11,12,13,14,16,17,18,19,21,22,23},
grid=major,
xminorgrids=true,
minor grid style={very thin, gray!15},]
\addplot[myblue, thick, mark=*] table[
col sep=comma,
x=Hour,
y=Prices_Indirect_0.1_50,
] {experiments/pointwise_prices_summary.csv};
\end{axis}
\end{tikzpicture} &
\begin{tikzpicture}
\begin{axis}[width=5cm, height=4.5cm, xlabel={Instant $t$}, grid=major, ymin=0.06, ymax=0.28,
xtick={1,5,10,15,20,24},
minor xtick={2,3,4,6,7,8,9,11,12,13,14,16,17,18,19,21,22,23},
grid=major,
xminorgrids=true,
minor grid style={very thin, gray!15},]
\addplot[myred, thick, mark=*] table[
col sep=comma,
x=Hour,
y=Prices_Indirect_1.0_50,
] {experiments/pointwise_prices_summary.csv};
\end{axis}
\end{tikzpicture} &

\begin{tikzpicture}
\begin{axis}[width=5cm, height=4.5cm, xlabel={Instant $t$}, grid=major, ymin=0.06, ymax=0.28,
xtick={1,5,10,15,20,24},
minor xtick={2,3,4,6,7,8,9,11,12,13,14,16,17,18,19,21,22,23},
grid=major,
xminorgrids=true,
minor grid style={very thin, gray!15},]
\addplot[mygreen, thick, mark=*] table[
col sep=comma,
x=Hour,
y=Prices_Indirect_10.0_50,
] {experiments/pointwise_prices_summary.csv};
\end{axis}
\end{tikzpicture}
\end{tabular}
}
\caption{Pointwise prices for $50$ clients by the direct method (top) and the indirect method (bottom); $\alpha=0.1$ (blue, left), $\alpha=1$ (red, middle), $\alpha=10$ (green, right). Prices are in $\EUR$/kWh.}
\label{fig:pointwise:prices}
\end{figure}

\subsubsection{Load shifting}

Figure~\ref{fig:pointwise:shifts} shows the relative consumption shift $(x^\star-x^0)/x^0$ induced by the direct method's pointwise prices, for $K=50$ clients. At low flexibility ($\alpha=0.1$) clients barely move load: the shift stays within a few percentage points of the baseline across the day. As $\alpha$ increases to $1$ and $10$, the shift widens until it is pinned at the $\pm 10\%$ bounds over most of the day, and its shape mirrors the price profile of Figure~\ref{fig:pointwise:prices}, with consumption drawn away from the priciest hours and into the cheapest ones.

\begin{figure}[!htbp]
\centering
\resizebox{\textwidth}{!}{%
\begin{tabular}{ccc}
\begin{tikzpicture}
\begin{axis}[width=5cm, height=4.5cm, grid=major, ymin=-11, ymax=11,
xtick={1,5,10,15,20,24},
minor xtick={2,3,4,6,7,8,9,11,12,13,14,16,17,18,19,21,22,23},
grid=major,
xminorgrids=true,
minor grid style={very thin, gray!15},]
\addplot[myblue, thick, mark=*] table[
col sep=comma,
x=Hour,
y=shifts_01_50,
] {experiments/pointwise_shifts_summary.csv};
\end{axis}
\end{tikzpicture} &

\begin{tikzpicture}
\begin{axis}[width=5cm, height=4.5cm, grid=major, ymin=-11, ymax=11,
xtick={1,5,10,15,20,24},
minor xtick={2,3,4,6,7,8,9,11,12,13,14,16,17,18,19,21,22,23},
grid=major,
xminorgrids=true,
minor grid style={very thin, gray!15},]
\addplot[myred, thick, mark=*] table[
col sep=comma,
x=Hour,
y=shifts_1_50,
] {experiments/pointwise_shifts_summary.csv};
\end{axis}
\end{tikzpicture} &

\begin{tikzpicture}
\begin{axis}[width=5cm, height=4.5cm, grid=major, ymin=-11, ymax=11,
xtick={1,5,10,15,20,24},
minor xtick={2,3,4,6,7,8,9,11,12,13,14,16,17,18,19,21,22,23},
grid=major,
xminorgrids=true,
minor grid style={very thin, gray!15},]
\addplot[mygreen, thick, mark=*] table[
col sep=comma,
x=Hour,
y=shifts_10_50,
] {experiments/pointwise_shifts_summary.csv};
\end{axis}
\end{tikzpicture}
\end{tabular}
}
\caption{Average relative consumption shift $\sum_{k=1}^K \frac{x_k^\star - x_k^0}{x_k^0} w_k$ in $\%$ for 50 clients under the direct method; $\alpha=0.1$ (blue, left), $\alpha=1$ (red, middle), $\alpha=10$ (green, right).}
\label{fig:pointwise:shifts}
\end{figure}

\subsubsection{Participation}

Table~\ref{tab:pointwise:participation} reports the average participation level $\sum_{k=1}^K y_k^\star w_k$ of the clients. The direct and the indirect methods again agree, the largest discrepancy is $1\%$ (at $K=10$, $\alpha=1$), which corroborates once again the reading of Table~\ref{tab:pointwise:profit}. Participation lies in a narrow band, between $18.2\%$ and $21.3\%$ over all 18 entries, and moves in two systematic ways. It decreases with the number of representative clients, from $21.3\%$ to $19.1\%$ at $\alpha=0.1$ between $K=10$ and $K=100$, for the same reason that profit does: a single price vector cannot be equally attractive to a more heterogeneous population. It also decreases with flexibility, by $0.4$ to $1.3$ percentage points between $\alpha=0.1$ and $\alpha=10$ depending on $K$. The latter effect is the participation counterpart of the profit erosion noted in Section~\ref{subsubsec:num:pointwise-profit}: a more flexible client also reshapes its consumption more aggressively under the outside offer, so $\Vill^0_k$ falls and the proposed tariff must concede more to close the same gap. This shows that the proposed offer has the potential to attract around $20$\% of customers, given the parameters chosen.

\begin{table}[!htbp]
\centering
\begin{tabular}{c | cc | cc | cc}
\toprule
\multicolumn{1}{c}{} & \multicolumn{2}{c}{10} & \multicolumn{2}{c}{50} & \multicolumn{2}{c}{100} \\
\cmidrule{1-1} \cmidrule(lr){2-3} \cmidrule(lr){4-5} \cmidrule(lr){6-7}
$\alpha$ & Direct & Indirect & Direct & Indirect & Direct & Indirect \\
\midrule
0.1 & 21.30 \% & 20.68 \% & 19.47 \% & 18.85 \% & 19.11 \% & 19.01 \% \\
1 & 21.27 \% & 20.27 \% & 18.91 \% & 18.88 \% & 18.73 \% & 18.69 \% \\
10 & 20.57 \% & 19.89 \% & 18.22 \% & 18.43 \% & 18.73 \% & 18.50 \% \\
\bottomrule
\end{tabular}
\caption{Pointwise pricing model participation comparison using the direct and the indirect methods.}
\label{tab:pointwise:participation}
\end{table}

\subsection{Relaxed time-of-use pricing}
\label{subsec:num:rtou}
In the current section, we treat the relaxed time-of-use model \eqref{eq:rtou-safeguard}, a middle ground between the pointwise pricing and the binary time-of-use pricing, where the start instant and the lengths are not necessarily integers, and where Algorithm \ref{alg:sgd-tou} was run for all $\omega \in \mathcal{O}$.

\subsubsection{Profit}

Table~\ref{tab:rtou:profit} reports, for each configuration, the price of representability $\PoR_N$ of the relaxed profit with respect to the pointwise profit obtained by the indirect method, where $N$ is the number of periods. Looking at Table~\ref{tab:rtou:profit}, we see that $\PoR_N$ increases with the flexibility in all six columns, from $1.37\%$ to $27.19\%$ for the two-period model at $K=10$ and from $1.23\%$ to $2.56\%$ for the three-period model at $K=100$. The interpretation is similar to the one given for the prices in Section~\ref{subsubsec:num:pointwise-profit}: a flexible client reacts to the shape of the price signal, and a pointwise signal can be shaped hour by hour (which is the case for the outside option), whereas a ToU signal (which is the case for the proposed offer) is constant on each period. When $\alpha$ is small the client barely moves, the shape hardly matters and the two models nearly coincide; when $\alpha$ is large the shape is what produces the profit, and the loss caused by flattening it grows accordingly. In addition, we note that the three-period model dominates the two-period one in eight of the nine cells, the only exception being $K=10$, $\alpha=0.1$, where the two are within $0.23$ percentage points of each other. 

\begin{table}[!htbp]
\centering
\begin{tabular}{c | cc | cc | cc}
\toprule
\multicolumn{1}{c}{} & \multicolumn{2}{c}{10} & \multicolumn{2}{c}{50} & \multicolumn{2}{c}{100} \\
\cmidrule{1-1} \cmidrule(lr){2-3} \cmidrule(lr){4-5} \cmidrule(lr){6-7}
$\alpha$ & $\PoR_2$ & $\PoR_3$ & $\PoR_2$ & $\PoR_3$ & $\PoR_2$ & $\PoR_3$ \\
\midrule
0.1 & \textbf{1.37\%} & 1.60\% & 3.89\% & \textbf{1.66\%} & 1.72\% & \textbf{1.23\%} \\
1 & 2.84\% & \textbf{2.83\%} & 4.57\% & \textbf{1.96\%} & 2.67\% & \textbf{1.46\%} \\
10 & 27.19\% & \textbf{2.83\%} & 10.26\% & \textbf{2.93\%} & 8.01\% & \textbf{2.56\%} \\
\bottomrule
\end{tabular}
\caption{Relaxed time-of-use pricing model profit for the two-period and the three-period models. For each client count: \emph{$\PoR_N$} is the relative profit difference versus the pointwise profit obtained for the indirect method.}
\label{tab:rtou:profit}
\end{table}

\subsubsection{Computational Time}

Table~\ref{tab:rtou:cpu} reports the cumulative CPU time of Algorithm~\ref{alg:sgd-tou} over all orders. Recall that the two-period model has $|\mathcal{O}|=2$ orders while the three-period model has $|\mathcal{O}|=24$, so the three-period runs are expected to be roughly an order of magnitude more expensive than the two-period ones. The measured ratio ranges from $9.37$ (10 clients, $\alpha=0.1$) to $17.66$ (100 clients, $\alpha=0.1$), with an average of $13.97$. Moreover, the per-order computation time is essentially identical for the two schedule models and remains comparable to that of the pointwise model, as predicted by Corollary~\ref{corollary:relaxed-tou-selection}. The most expensive component of the direct method is the projection of the optimal consumption onto each client's feasible set $\mathcal{X}_k$, which requires $O(T\log T)$ time. Since $b$ projections are performed per iteration and the algorithm runs for $N_{\text{it}}$ iterations, the total cost is $O(b\,T\log T\,N_{\text{it}})$; in practice, the stopping criterion terminates the algorithm well before this bound.

\begin{table}[!htbp]
\centering
\begin{tabular}{c | cc | cc | cc}
\toprule
\multicolumn{1}{c}{} & \multicolumn{2}{c}{10} & \multicolumn{2}{c}{50} & \multicolumn{2}{c}{100} \\
\cmidrule{1-1} \cmidrule(lr){2-3} \cmidrule(lr){4-5} \cmidrule(lr){6-7}
$\alpha$ & $N=2$ & $N=3$ & $N=2$ & $N=3$ & $N=2$ & $N=3$ \\
\midrule
0.1 & 03:57 & 37:01 & 03:44 & 01:00:06 & 04:56 & 01:27:07 \\
1 & 03:40 & 42:13 & 03:54 & 01:03:37 & 05:13 & 01:15:34 \\
10 & 03:38 & 52:37 & 04:20 & 01:05:02 & 07:39 & 01:22:33 \\
\bottomrule
\end{tabular}
\caption{Relaxed time-of-use pricing model CPU time using Algorithm \ref{alg:sgd-tou} for all orders.}
\label{tab:rtou:cpu}
\end{table}

\subsubsection{Prices}

Figure~\ref{fig:rtou:prices} shows the relaxed hourly prices for $K=50$ clients. We note that most hours carry one of the period levels exactly, but a few carry an intermediate value. Those are the hours that fall between two periods, in which, the effective price is the convex combination of the period levels weighted by the fraction of the hour spent in each subperiod. 

\smallskip

For the two-period case: the off-peak level is close to $0.12\EUR$/kWh and the on-peak level to $0.167\EUR$/kWh at all three flexibilities; four hours are fractional, and the off-peak weights they carry, added to the fully off-peak hours, return exactly $L_1 = 8$, as the projection onto $\mathcal{L}(\omega)$ of Lemma~\ref{lem:boxed-simplex} enforces. The two off-peak subperiods so recovered sit on the two low points of the cost profile of Figure~\ref{fig:punc:prices}. 

\smallskip

For the three-period case: we note that even though the three levels are available, only two were used in the solution, and the off-peak level accounts for twelve hours, which is exactly $L_1 + L_2 = 8 + 4$. The optimizer therefore sets $q_1 = q_2$: the mid-peak period is priced at the off-peak level, and the third level is never used. The constraint $q_1 \le q_2$ of $\mathcal{Q}$ is thus active at the solution, so the isotonic part of the projection of Lemma~\ref{lem:boxed-simplex} pools the two coordinates. Adding the third period did not create a third distinct price level ($q_1=q_2$), but it expanded the cheap period from 8 to 12 hours ($L_1+L_2 = 12$). Finally, all levels lie strictly inside $[p^{\lowerb}, p^{\upperb}] = [0.05, 0.35]$.

\begin{figure}[!htbp]
\centering
\resizebox{\textwidth}{!}{%
\begin{tabular}{ccc}
\begin{tikzpicture}
\begin{axis}[width=5cm, height=4.5cm, grid=major, ymin=0.06, ymax=0.28,
xtick={1,5,10,15,20,24},
minor xtick={2,3,4,6,7,8,9,11,12,13,14,16,17,18,19,21,22,23},
grid=major,
xminorgrids=true,
minor grid style={very thin, gray!15},]
\addplot[myblue, thick, mark=*] table[
col sep=comma,
x=Hour,
y=Prices_Direct_N2_0.1_50,
] {experiments/rtou_prices_summary.csv};
\end{axis}
\end{tikzpicture} &

\begin{tikzpicture}
\begin{axis}[width=5cm, height=4.5cm, grid=major, ymin=0.06, ymax=0.28,
xtick={1,5,10,15,20,24},
minor xtick={2,3,4,6,7,8,9,11,12,13,14,16,17,18,19,21,22,23},
grid=major,
xminorgrids=true,
minor grid style={very thin, gray!15},]
\addplot[myred, thick, mark=*] table[
col sep=comma,
x=Hour,
y=Prices_Direct_N2_1.0_50,
] {experiments/rtou_prices_summary.csv};
\end{axis}
\end{tikzpicture} &

\begin{tikzpicture}
\begin{axis}[width=5cm, height=4.5cm, grid=major, ymin=0.06, ymax=0.28,
xtick={1,5,10,15,20,24},
minor xtick={2,3,4,6,7,8,9,11,12,13,14,16,17,18,19,21,22,23},
grid=major,
xminorgrids=true,
minor grid style={very thin, gray!15},]
\addplot[mygreen, thick, mark=*] table[
col sep=comma,
x=Hour,
y=Prices_Direct_N2_10.0_50,
] {experiments/rtou_prices_summary.csv};
\end{axis}
\end{tikzpicture} \\

\begin{tikzpicture}
\begin{axis}[width=5cm, height=4.5cm, xlabel={Instant $t$}, grid=major, ymin=0.06, ymax=0.28,
xtick={1,5,10,15,20,24},
minor xtick={2,3,4,6,7,8,9,11,12,13,14,16,17,18,19,21,22,23},
grid=major,
xminorgrids=true,
minor grid style={very thin, gray!15},]
\addplot[myblue, thick, mark=*] table[
col sep=comma,
x=Hour,
y=Prices_Direct_N3_0.1_50,
] {experiments/rtou_prices_summary.csv};
\end{axis}
\end{tikzpicture} &
\begin{tikzpicture}
\begin{axis}[width=5cm, height=4.5cm, xlabel={Instant $t$}, grid=major, ymin=0.06, ymax=0.28,
xtick={1,5,10,15,20,24},
minor xtick={2,3,4,6,7,8,9,11,12,13,14,16,17,18,19,21,22,23},
grid=major,
xminorgrids=true,
minor grid style={very thin, gray!15},]
\addplot[myred, thick, mark=*] table[
col sep=comma,
x=Hour,
y=Prices_Direct_N3_1.0_50,
] {experiments/rtou_prices_summary.csv};
\end{axis}
\end{tikzpicture} &

\begin{tikzpicture}
\begin{axis}[width=5cm, height=4.5cm, xlabel={Instant $t$}, grid=major, ymin=0.06, ymax=0.28,
xtick={1,5,10,15,20,24},
minor xtick={2,3,4,6,7,8,9,11,12,13,14,16,17,18,19,21,22,23},
grid=major,
xminorgrids=true,
minor grid style={very thin, gray!15},]
\addplot[mygreen, thick, mark=*] table[
col sep=comma,
x=Hour,
y=Prices_Direct_N3_10.0_50,
] {experiments/rtou_prices_summary.csv};
\end{axis}
\end{tikzpicture}
\end{tabular}
}
\caption{Relaxed time-of-use prices for $50$ clients under the two-period model (top) and the three-period model (bottom); $\alpha=0.1$ (blue, left), $\alpha=1$ (red, middle), $\alpha=10$ (green, right). Prices are in $\EUR$/kWh.}
\label{fig:rtou:prices}
\end{figure}

\subsubsection{Load shifting and participation}
The load-shifting and participation results for the relaxed time-of-use model closely mirror those of the pointwise model. To avoid repetition, we present them in Appendix~\ref{app:rtou-exp}, where the interested reader will find the full set of figures.

\subsection{Time-of-use pricing}
\label{subsec:num:tou}
In the current section, we treat the time-of-use model (cf. Section~\ref{subsec:tou-pricing}). We compare the three methods that remain tractable at this scale, the rounding method of Section~\ref{subsubsec:rounding} (denoted by \emph{R}), the direct method of Section~\ref{subsubsec:direct} and the indirect method of Section~\ref{subsubsec:indirect}, on the two schedule families of Section~\ref{subsubsec:num:schedule}. An entry N/A means that the solver returned no feasible incumbent within the time budget. We first compare profits and derive the price of representability, then describe the resulting tariffs.

\subsubsection{Profit}

Table~\ref{tab:tou:profit} reports the profits and Table~\ref{tab:tou:por} the associated prices of representability, that is, the relative loss of each ToU profit with respect to the pointwise profit of the indirect method. Since no method certifies global optimality, the price of representability of a configuration is more of an estimate than a precise measure.

\smallskip

The estimated price of representability lies between $1.31\%$ and $13.83\%$ for the two-period family and between $0.59\%$ and $6.55\%$ for the three-period one. In absolute terms the two-period tariff returns between $8.43\EUR$ and $10.14\EUR$ per representative client against a pointwise benchmark of $9.74\EUR$ to $10.27\EUR$. Restricting a retailer that could price every hour separately to a regulator-admissible two-block schedule therefore costs on average $6.85\%$ in the two-period case and $3.63\%$ in the three-period case. The estimate increases with $\alpha$ in every column, and this is consistent with the trend already recorded on the relaxed one in Table~\ref{tab:rtou:profit}. The three-period estimate is below the two-period, which was expected, since the three-period family reproduces every two-period tariff. Indeed, choosing $q_1$ equal to the off-peak price and $q_2 = q_3$ equal to the on-peak price is admissible for $\mathcal{Q}$ and induces the same effective price vector.

\begin{table}[!htbp]
\centering
\setlength{\tabcolsep}{4pt}
\begin{tabular}{cc | ccc | ccc}
\toprule
\multicolumn{1}{c}{} & \multicolumn{1}{c}{} & \multicolumn{3}{c}{$N = 2$} & \multicolumn{3}{c}{$N = 3$} \\
\cmidrule(lr){1-2} \cmidrule(lr){3-5} \cmidrule(lr){6-8}
Clients & $\alpha$ & Direct & Indirect & R & Direct & Indirect & R \\
\cmidrule(lr){1-2} \cmidrule(lr){3-5} \cmidrule(lr){6-8}
\multirow[t]{3}{*}{} & 0.1 & 10.07 & \textbf{10.14} & 9.82 & 9.93 & \textbf{10.21} & 9.87 \\
10 & 1 & 9.84 & \textbf{9.87} & 9.84 & 9.47 & \textbf{9.91} & 9.70 \\
& 10 & 7.30 & \textbf{8.89} & 8.87 & 9.39 & \textbf{9.49} & 9.33 \\
\cmidrule(lr){1-2} \cmidrule(lr){3-5} \cmidrule(lr){6-8}
\multirow[t]{3}{*}{} & 0.1 & 9.69 & \textbf{9.69} & 9.30 & 9.68 & \textbf{9.96} & 9.79 \\
50 & 1 & \textbf{9.48} & 7.48 & 9.34 & 9.50 & 9.02 & \textbf{9.57} \\
& 10 & \textbf{8.52} & 3.90 & 8.41 & \textbf{9.24} & N/A & 9.24 \\
\cmidrule(lr){1-2} \cmidrule(lr){3-5} \cmidrule(lr){6-8}
\multirow[t]{3}{*}{} & 0.1 & \textbf{9.70} & 9.69 & 9.69 & 9.75 & \textbf{9.88} & 9.46 \\
100 & 1 & \textbf{9.47} & N/A & 9.37 & \textbf{9.60} & N/A & 9.59 \\
& 10 & \textbf{8.43} & 6.14 & \textbf{8.43} & \textbf{9.21} & 6.76 & 9.21 \\
\bottomrule
\end{tabular}
\caption{Time-of-use pricing model profit ($\EUR$ per representative client) for the two-period ($N = 2$) and the three-period ($N = 3$) models. For each number of clients: Direct, Indirect and R are the profits for the direct (SGA), indirect (\textsc{Knitro}) and rounding methods, respectively; the best value of each block is in bold and N/A marks a run that returned no incumbent.}
\label{tab:tou:profit}
\end{table}

\begin{table}[!htbp]
\centering
\setlength{\tabcolsep}{4pt}
\begin{tabular}{cc | ccc | ccc}
\toprule
\multicolumn{1}{c}{} & \multicolumn{1}{c}{} & \multicolumn{3}{c}{$N = 2$} & \multicolumn{3}{c}{$N = 3$} \\
\cmidrule(lr){1-2} \cmidrule(lr){3-5} \cmidrule(lr){6-8}
Clients & $\alpha$ & Direct & Indirect & R & Direct & Indirect & R \\
\cmidrule(lr){1-2} \cmidrule(lr){3-5} \cmidrule(lr){6-8}
\multirow[t]{3}{*}{} & 0.1 & 1.96\% & \textbf{1.31\%} & 4.37\% & 3.35\% & \textbf{0.59\%} & 3.94\% \\
10 & 1 & 3.66\% & \textbf{3.40\%} & 3.66\% & 7.31\% & \textbf{2.98\%} & 5.02\% \\
& 10 & 27.40\% & \textbf{11.55\%} & 11.77\% & 6.64\% & \textbf{5.62\%} & 7.18\% \\
\cmidrule(lr){1-2} \cmidrule(lr){3-5} \cmidrule(lr){6-8}
\multirow[t]{3}{*}{} & 0.1 & 4.75\% & \textbf{4.69\%} & 8.57\% & 4.81\% & \textbf{2.10\%} & 3.75\% \\
50 & 1 & \textbf{5.99\%} & 25.81\% & 7.34\% & 5.84\% & 10.55\% & \textbf{5.13\%} \\
& 10 & \textbf{13.83\%} & 60.59\% & 14.94\% & \textbf{6.55\%} & N/A & 6.55\% \\
\cmidrule(lr){1-2} \cmidrule(lr){3-5} \cmidrule(lr){6-8}
\multirow[t]{3}{*}{} & 0.1 & \textbf{2.91\%} & 2.96\% & 2.98\% & 2.38\% & \textbf{1.11\%} & 5.31\% \\
100 & 1 & \textbf{4.43\%} & N/A & 5.43\% & \textbf{3.13\%} & N/A & 3.19\% \\
& 10 & \textbf{13.50\%} & 37.01\% & \textbf{13.50\%} & \textbf{5.43\%} & 30.65\% & 5.43\% \\
\bottomrule
\end{tabular}
\caption{Time-of-use pricing model PoR for the two-period ($N = 2$) and the three-period ($N = 3$) models. For each number of clients: Direct, Indirect and R correspond to the direct (SGA), indirect (\textsc{Knitro}) and rounding methods, respectively; the smallest entry of each block, in bold, is the estimate of $\PoR(\mathcal{Q}, \mathcal{H})$ retained for that configuration.}
\label{tab:tou:por}
\end{table}

\subsubsection{Computational time}

Table~\ref{tab:tou:cpu} reports CPU time. The rounding method is the cheapest and by far the most stable. Its cost is essentially independent of the schedule family because it never enumerates schedules: it performs one pointwise solve, one projection onto $\mathcal{H}$ written as a mixed-integer quadratic program, and one price re-optimization at the fixed schedule $h^\star$, and only the middle step sees $\mathcal{H}$ at all. The direct method costs on average $10$:$08$ for $N=2$, and on average $1$:$51$:$28$ for $N=3$. The ratio of the two averages is $11.0$, which is close to the ratio $|\mathcal{O}_3|/|\mathcal{O}_2| = 24/2 = 12$ of the numbers of admissible subperiod orders, the per-order cost being the same for the two families by Table~\ref{tab:rtou:cpu}. 

\smallskip

The indirect method is the only one whose cost grows with the number of clients, and it is the only one that fails. The reformulation~\eqref{model:cp:kkt-conditions} already carries $2TK + 2K$ binaries for the lower level, as noted in Section~\ref{subsubsec:num:pointwise-profit}, and the schedules add $T \cdot N$ more integer variables together with the switch-counting constraints~\eqref{eq:switch-counting}. As a result, the solver may struggle to find a solution within the allotted time, hence the N/A entries in the table.

\begin{table}[!htbp]
\centering
\begin{tabular}{cc | ccc | ccc}
\toprule
\multicolumn{1}{c}{} & \multicolumn{1}{c}{} & \multicolumn{3}{c}{$N = 2$} & \multicolumn{3}{c}{$N = 3$} \\
\cmidrule(lr){1-2} \cmidrule(lr){3-5} \cmidrule(lr){6-8}
Clients & $\alpha$ & Direct & Indirect & R & Direct & Indirect & R \\
\cmidrule(lr){1-2} \cmidrule(lr){3-5} \cmidrule(lr){6-8}
\multirow[t]{3}{*}{} & 0.1 & 05:59 & \textbf{00:40} & 04:58 & 1:05:10 & 14:08 & \textbf{05:26} \\
10 & 1 & 05:07 & \textbf{02:05} & 03:12 & 57:17 & \textbf{02:54} & 03:24 \\
& 10 & 05:31 & 11:38 & \textbf{04:44} & 1:09:51 & 40:10 & \textbf{05:30} \\
\cmidrule(lr){1-2} \cmidrule(lr){3-5} \cmidrule(lr){6-8}
\multirow[t]{3}{*}{} & 0.1 & \textbf{09:19} & 28:38 & 10:33 & 2:04:24 & 2:17:13 & \textbf{08:32} \\
50 & 1 & 07:59 & 24:33 & \textbf{05:50} & 2:03:04 & 20:45 & \textbf{05:18} \\
& 10 & 10:52 & 1:37:42 & \textbf{06:45} & 2:15:59 & N/A & \textbf{07:49} \\
\cmidrule(lr){1-2} \cmidrule(lr){3-5} \cmidrule(lr){6-8}
\multirow[t]{3}{*}{} & 0.1 & \textbf{12:52} & 1:47:03 & 13:16 & 2:31:38 & 2:28:54 & \textbf{16:07} \\
100 & 1 & \textbf{10:30} & N/A & 13:55 & 2:09:54 & N/A & \textbf{14:09} \\
& 10 & 23:00 & 1:46:16 & \textbf{08:53} & 2:25:53 & 2:10:59 & \textbf{09:11} \\
\bottomrule
\end{tabular}
\caption{Time-of-use pricing model CPU time for the two-period ($N = 2$) and the three-period ($N = 3$) models. For each number of clients: Direct and Indirect are the cumulative CPU time for the direct (SGA) and the CPU time of the indirect (\textsc{Knitro}) methods, respectively, and R that of the rounding method.}
\label{tab:tou:cpu}
\end{table}

\subsubsection{Prices} 

Figure~\ref{fig:tou:prices} shows the ToU tariffs for $K=50$ clients. We note that the same schedule is selected at all three flexibility levels for the two-period model, whereas for the three-period model the schedule differs at $\alpha = 10$; in each period configuration the tariff catches the off-peak periods of the cost profile of Figure~\ref{fig:punc:prices}.

\smallskip

For the two-period case: off-peak hours are $2$ to $6$ and $14$ to $16$, on-peak elsewhere, with levels close to $0.123\EUR$/kWh and $0.166\EUR$/kWh. Unlike the pointwise prices of Figure~\ref{fig:pointwise:prices}, which range from about $0.11$ to $0.19\EUR$/kWh and at several hours fall below the marginal cost, the ToU levels follow the cost profile far more coarsely: the tariff prices the whole $7$--$13$ stretch at a single level although $c_t$ varies from $0.132$ to $0.171\EUR$/kWh over it.

\smallskip

For the three-period case: the collapse observed on the relaxed solutions persists, as the tariff again shows two levels only ($q_1 = q_2$), close to $0.128\EUR$/kWh and $0.176\EUR$/kWh, and the cheap plateau covers twelve hours ($L_1+L_2=12$). All the levels lie strictly inside $[p^{\lowerb}, p^{\upperb}] = [0.05, 0.35]$, so the regulatory bounds are inactive and it is the ordering constraint of $\mathcal{Q}$, active through $q_1 = q_2$, that binds in the three-period model.

\begin{figure}[!htbp]
\centering
\resizebox{\textwidth}{!}{%
\begin{tabular}{ccc}
\begin{tikzpicture}
\begin{axis}[width=5cm, height=4.5cm, grid=major, ymin=0.06, ymax=0.28,
xtick={1,5,10,15,20,24},
minor xtick={2,3,4,6,7,8,9,11,12,13,14,16,17,18,19,21,22,23},
grid=major,
xminorgrids=true,
minor grid style={very thin, gray!15},]
\addplot[myblue, thick, mark=*] table[
col sep=comma,
x=Hour,
y=Prices_Direct_N2_0.1_50,
] {experiments/tou_prices_summary.csv};
\end{axis}
\end{tikzpicture} &

\begin{tikzpicture}
\begin{axis}[width=5cm, height=4.5cm, grid=major, ymin=0.06, ymax=0.28,
xtick={1,5,10,15,20,24},
minor xtick={2,3,4,6,7,8,9,11,12,13,14,16,17,18,19,21,22,23},
grid=major,
xminorgrids=true,
minor grid style={very thin, gray!15},]
\addplot[myred, thick, mark=*] table[
col sep=comma,
x=Hour,
y=Prices_Direct_N2_1.0_50,
] {experiments/tou_prices_summary.csv};
\end{axis}
\end{tikzpicture} &

\begin{tikzpicture}
\begin{axis}[width=5cm, height=4.5cm, grid=major, ymin=0.06, ymax=0.28,
xtick={1,5,10,15,20,24},
minor xtick={2,3,4,6,7,8,9,11,12,13,14,16,17,18,19,21,22,23},
grid=major,
xminorgrids=true,
minor grid style={very thin, gray!15},]
\addplot[mygreen, thick, mark=*] table[
col sep=comma,
x=Hour,
y=Prices_Direct_N2_10.0_50,
] {experiments/tou_prices_summary.csv};
\end{axis}
\end{tikzpicture} \\

\begin{tikzpicture}
\begin{axis}[width=5cm, height=4.5cm, xlabel={Instant $t$}, grid=major, ymin=0.06, ymax=0.28,
xtick={1,5,10,15,20,24},
minor xtick={2,3,4,6,7,8,9,11,12,13,14,16,17,18,19,21,22,23},
grid=major,
xminorgrids=true,
minor grid style={very thin, gray!15},]
\addplot[myblue, thick, mark=*] table[
col sep=comma,
x=Hour,
y=Prices_Direct_N3_0.1_50,
] {experiments/tou_prices_summary.csv};
\end{axis}
\end{tikzpicture} &
\begin{tikzpicture}
\begin{axis}[width=5cm, height=4.5cm, xlabel={Instant $t$}, grid=major, ymin=0.06, ymax=0.28,
xtick={1,5,10,15,20,24},
minor xtick={2,3,4,6,7,8,9,11,12,13,14,16,17,18,19,21,22,23},
grid=major,
xminorgrids=true,
minor grid style={very thin, gray!15},]
\addplot[myred, thick, mark=*] table[
col sep=comma,
x=Hour,
y=Prices_Direct_N3_1.0_50,
] {experiments/tou_prices_summary.csv};
\end{axis}
\end{tikzpicture} &

\begin{tikzpicture}
\begin{axis}[width=5cm, height=4.5cm, xlabel={Instant $t$}, grid=major, ymin=0.06, ymax=0.28,
xtick={1,5,10,15,20,24},
minor xtick={2,3,4,6,7,8,9,11,12,13,14,16,17,18,19,21,22,23},
grid=major,
xminorgrids=true,
minor grid style={very thin, gray!15},]
\addplot[mygreen, thick, mark=*] table[
col sep=comma,
x=Hour,
y=Prices_Direct_N3_10.0_50,
] {experiments/tou_prices_summary.csv};
\end{axis}
\end{tikzpicture}
\end{tabular}
}
\caption{Time-of-use prices for $50$ clients under the two-period model (top) and the three-period model (bottom); $\alpha=0.1$ (blue, left), $\alpha=1$ (red, middle), $\alpha=10$ (green, right). Prices are in $\EUR$/kWh.}
\label{fig:tou:prices}
\end{figure}

\subsubsection{Load shifting and participation}
Once again, the load-shifting and participation results for the relaxed time-of-use model closely mirror those of the pointwise model. To avoid repetition, we present them in Appendix~\ref{app:rtou-exp}, where the interested reader will find the full set of figures.

\section{Concluding observations and perspectives}
\label{sec:concl}

We have studied the joint design of the temporal structure and the price levels of a time-of-use tariff, in a bilevel model in which each client both reshapes its consumption and decides whether to accept the offer. The three ingredients (a combinatorial schedule, continuous price levels, and an explicit client-level participation decision) have been treated within a single model.

\smallskip

The analysis rests on one structural fact: although the client's program is nonconvex, because the participation level multiplies the value of the consumption problem, it decomposes sequentially into two Euclidean projections (Theorem~\ref{thm:cp:closed-form}) evaluable in $O(T\log T)$ operations (Corollary~\ref{cor:cp:closed-form-complexity}). Two consequences follow. First, the optimality system of that decomposition, rather than the KKT system of the joint program, yields an exact single-level mixed-integer reformulation with instance-computable big-$M$ bounds on every multiplier (Theorem~\ref{thm:cp:kkt-conditions}, Corollary~\ref{cor:cp:kkt-conditions}). Second, the leader's objective is an elementary selection in the sense of Bolte and Pauwels, so a stochastic gradient ascent driven by a cheap selection derivative has almost surely Clarke critical accumulation points (Theorem~\ref{thm:pointwise-selection}, Corollary~\ref{corollary:sgd-convergence}), and the same holds for the schedule once it is reparametrized by continuous start instants and lengths (Proposition~\ref{prop:charec-calen}, Corollary~\ref{cor:sgd-convergence-tou}).

\smallskip

The experiments on the French residential case study support three conclusions. The gradient route is a valid alternative to a commercial mixed-integer solver on the pointwise problem: the two agree within $0.3\%$ on profit and yield visually indistinguishable price vectors, while their computational costs scale in opposite directions, the direct method being independent of the number of clients by construction and two to eight times faster already at $K=100$. On the ToU problem, the mixed-integer route is the best of the three on the small instances, but lags behind for larger instances. The price of representability of a two-period tariff is estimated at $6.85\%$ and that of a three-period tariff at $3.63\%$, which is consistent with the fact that the three-period family dominates the two-period tariff. Finally, the ToU tariffs are much coarser than the pointwise ones, and they do not follow the cost profile as closely, but they still capture the off-peak periods.

\smallskip

Three limitations should be kept in mind. The reported prices of representability are upper estimates: complete enumeration is out of reach at this scale (Remark~\ref{rem:enumeration-cost}), so both the numerator and the denominator of Definition~\ref{def:por} come from local solves of a nonconvex problem. The experiments cover a single representative day, a single population, and a single calibration of the participation sensitivity $\beta_k$ and of the flexibility envelope $f_k$; the levels of participation reported here are consequences of that calibration and are meaningful only as comparisons across pricing schemes at fixed $\beta_k$. Finally, the guarantee of Corollary~\ref{cor:sgd-convergence-tou} is a criticality guarantee, not a global one, and it is informative only at accumulation points interior to the safeguard box (which was the case in our experiments).

\smallskip

Several extensions preserve the structure on which the analysis rests, and are developed in Appendix~\ref{app:extensions}: an anisotropic discomfort matrix, a cost that is nonseparable across clients, a general strictly convex regularizer of the participation decision, a menu of several offers, and day-dependent schedules for seasonal or weekday/weekend tariffs. The last two are the most consequential, since they multiply the schedule space by $I$ offers or $D$ days and therefore remove any remaining hope of enumeration, leaving the gradient-based methods as the only practicable route. 

\printbibliography

\appendix

\section{Notation}
\label{app:notation}

\begin{xltabular}{\textwidth}{@{}l X@{}}
\label{tab:notation} \\
\toprule
\textbf{Symbol} & \textbf{Description} \\
\midrule
\endfirsthead

\multicolumn{2}{@{}l}{\small\itshape Table~\thetable{} continued from the previous page.} \\
\toprule
\textbf{Symbol} & \textbf{Description} \\
\midrule
\endhead

\midrule
\multicolumn{2}{r@{}}{\small\itshape continued on the next page} \\
\endfoot

\bottomrule
\caption{Summary of notation}
\endlastfoot

\multicolumn{2}{@{}l}{\textbf{Sets and indices}} \\
\addlinespace[2pt]
\multicolumn{2}{@{}l}{\textit{Indices and cardinalities}} \\
$\llbracket A \rrbracket$ & Set of integers $\{1,2,\dots,A\}$. \\
$t \in \llbracket T \rrbracket$ & Index of time steps. \\
$T$ & Number of time steps; a time step typically corresponds to one hour, but may be a finer subdivision. \\
$k \in \llbracket K \rrbracket$ & Index of clients. \\
$K$ & Number of clients. \\
$n \in \llbracket N \rrbracket$ & Index of period types (e.g.\ off-peak, shoulder, peak). \\
$N$ & Number of period types. \\
$s \in \llbracket \bar{S} \rrbracket$ & Index of subperiods. \\
\addlinespace[2pt]
\multicolumn{2}{@{}l}{\textit{Model sets}} \\
$\mathcal{Q}$ & Set of admissible price-level vectors $q=(q_n)_{n \in \llbracket N \rrbracket}$, see~\eqref{set:price-levels}. \\
$\mathcal{P}$ & Set of admissible pointwise price vectors, $\mathcal{P} := [p^{\lowerb},p^{\upperb}]^{T}$. \\
$\mathcal{H}$ & Set of admissible schedules $h=(h_{t,n})$, see~\eqref{set:schedules}. \\
$\mathcal{M}$ & Set of lower-dimensional representations of the set $\mathcal{H}$, see Proposition~\ref{prop:charec-calen}. \\
$\mathcal{O}$ & Set of admissible order maps. \\
$\mathcal{L}(\omega)$ & Relaxed set of admissible subperiod-length vectors associated with the order map $\omega$. \\
$\mathcal{X}_k$ & Consumption feasible set of client $k$ (a boxed simplex), see~\eqref{set:consumption-feasible}. \\
\addlinespace

\multicolumn{2}{@{}l}{\textbf{Parameters}} \\
\addlinespace[2pt]
\multicolumn{2}{@{}l}{\textit{Schedule structure}} \\
$L_n$ & Prescribed total duration (number of time steps) of period type $n$. \\
$S_n$ & Maximum number of subperiods (contiguous cyclic blocks) of period type $n$. \\
$\bar{S}$ & Maximum total number of subperiods of a schedule, $\bar{S} = \textstyle\sum_{n=1}^N S_n$. \\
\addlinespace[2pt]
\multicolumn{2}{@{}l}{\textit{Prices and supply cost}} \\
$p^{\lowerb}, p^{\upperb}$ & Minimum and maximum admissible price, respectively, in $\EUR$/kWh. \\
$c_t$ & Marginal cost of supplying electricity at time step $t$ in $\EUR$/kWh. \\
\addlinespace[2pt]
\multicolumn{2}{@{}l}{\textit{Client demand data (kWh)}} \\
$x^0_{t,k}$ & Baseline consumption of client $k$ at time step $t$. \\
$x^{\lowerb}_{t,k}, x^{\upperb}_{t,k}$ & Minimum and maximum consumption of client $k$ at time step $t$, respectively. \\
$f_k$ & Fraction of hourly consumption that client $k$ can shift; used to set $x^{\lowerb}_k, x^{\upperb}_k$ in Section~\ref{sec:num}. \\
\addlinespace[2pt]
\multicolumn{2}{@{}l}{\textit{Client preferences and outside option}} \\
$w_k$ & Weight of client $k$ in the retailer's objective. \\
$\alpha_k$ & Flexibility of client $k$, i.e.\ willingness to deviate from the baseline $x^0_k$. \\
$\beta_k$ & Choice sensitivity of client $k$ in $1/\EUR$; $1/\beta_k$ is the width, in $\EUR$, of the band over which acceptance moves from $0$ to $1$. \\
$p^0_k$ & Price vector of the outside option of client $k$ in $\EUR$/kWh. \\
$\Vill^0_k$ & Value of the outside option of client $k$ in $\EUR$ (e.g.\ a current contract or a regulated default tariff), \emph{including} the associated discomfort. \\
\addlinespace[2pt]
\multicolumn{2}{@{}l}{\textit{Big-$M$ constants}} \\
$M_k, M^\lambda_k, M^\mu_k$ & Instance-computable big-$M$ constants bounding the multipliers of client $k$, see Theorem~\ref{thm:cp:kkt-conditions}. \\
\addlinespace

\multicolumn{2}{@{}l}{\textbf{Decision variables}} \\
\addlinespace[2pt]
\multicolumn{2}{@{}l}{\textit{Upper level (retailer): schedule}} \\
$h_{t,n}$ & Binary variable equal to $1$ if time step $t$ belongs to period type $n$, and $0$ otherwise. \\
$\omega \in \mathcal{O}$ & Order map assigning each subperiod index to a period type. \\
$\mathcal{s}$ & Starting time step of the anchoring subperiod, see Proposition~\ref{prop:charec-calen}. \\
$l_s$ & Length of subperiod $s$. \\
\addlinespace[2pt]
\multicolumn{2}{@{}l}{\textit{Upper level (retailer): prices}} \\
$q_n$ & Continuous variable representing the price of period type $n$ in $\EUR$/kWh. \\
\addlinespace[2pt]
\multicolumn{2}{@{}l}{\textit{Lower level (clients)}} \\
$x_{t,k}$ & Continuous variable representing the consumption of client $k$ at time step $t$ in kWh. \\
$y_k$ & Participation level of client $k$, $y_k \in [0,1]$: probability (equivalently, within a cluster, fraction) of client $k$ accepting the retailer's offer rather than the outside option. \\
\addlinespace[2pt]
\multicolumn{2}{@{}l}{\textit{Dual variables of the lower-level problem}} \\
$\lambda^x_{t,k}, \mu^x_{t,k}$ & Multipliers of the upper and lower consumption bounds of client $k$ at time step $t$, respectively. \\
$\gamma^x_k$ & Multiplier of the daily energy equality constraint of client $k$. \\
$\lambda^y_k, \mu^y_k$ & Multipliers of the upper and lower bounds on $y_k$, respectively. \\
\addlinespace

\multicolumn{2}{@{}l}{\textbf{Auxiliary quantities}} \\
$p_t$ & Pointwise (effective) price at time step $t$ in $\EUR$/kWh, $p_t = \textstyle\sum_{n=1}^N h_{t,n} q_n$. \\
$x^\star_k(p)$ & Optimal consumption of client $k$ under the price signal $p$. \\
$y^\star_k(p)$ & Optimal participation level of client $k$ under the price signal $p$. \\
$\theta$ & Phase label: a pair of an active-set partition and a participation state, see Section~\ref{subsec:pointwise-pricing}. \\
\addlinespace

\multicolumn{2}{@{}l}{\textbf{Functions and operators}} \\
$\Bill(p,x)$ & Bill incurred by the consumption profile $x$ under the prices $p$ in $\EUR$: $\Bill(p,x)=\sum_{t=1}^T p_t x_t$. \\
$\Discomfort_k(x,x^0_k)$ & Discomfort of client $k$ when consuming $x$ instead of the baseline $x^0_k$, in $\EUR$. \\
$\Vill_k(p)$ & Value to client $k$ of accepting the offer $p$, i.e.\ bill plus discomfort at its optimal consumption, in $\EUR$: $\Vill_k(p) = \Bill(p,x^\star_k(p)) + \Discomfort_k(x^\star_k(p),x^0_k)$. \\
$\Cost(x)$ & Cost incurred by the retailer to supply the consumption $x$, in $\EUR$. \\
$F(p)$, $F_k(p)$ & Retailer's hyper-objective at the price signal $p$ and its per-client contribution, see Section~\ref{subsec:pointwise-pricing}. \\
$\Proj_{\mathcal{C}}$ & Euclidean projection onto a convex set $\mathcal{C}$. \\
$\sigma_k$ & Phase-index map $p \mapsto \theta$ of client $k$, see Section~\ref{subsec:pointwise-pricing}. \\
$\widehat\nabla_{\sigma}$ & Selection derivative associated with an elementary selection of index map $\sigma$. \\
$\mathcal{E}, \mathcal{I}, \mathcal{S}$ & Classes of elementary log-exp functions, elementary index functions, and elementary selections, respectively, see Appendix~\ref{app:elementary-selections}. \\
\end{xltabular}

\section{Background: elementary selections}
\label{app:elementary-selections}

\begin{definition}[Elementary log-exp functions]
\label{def:elementary-functions}
A function $f:\R^p\to\R$ is \emph{elementary (log-exp)} if it admits a finite compositional expression involving $+,-,\times,/$, affine maps, $\exp$ and $\log$, on its domain of definition. We denote by $\mathcal{E}$ the set of such functions.
\end{definition}

\begin{definition}[Elementary index]
\label{def:elementary-index}
A function $\sigma:\R^p\to\{1,\dots,m\}$ is an \emph{elementary index} if, for every $i$, the set $\{x\in\R^p : \sigma(x)=i\}$ is the solution set of finitely many equalities and inequalities involving elementary functions on $\R^p$. We denote by $\mathcal{I}$ the set of elementary index functions.
\end{definition}

\begin{definition}[Elementary selection]
\label{def:elementary-selection}
A continuous function $f:\R^p\to\R$ has an \emph{elementary
selection} $(\sigma,f_1,\dots,f_m)$ if $\sigma\in\mathcal{I}$ and $f_1,\dots,f_m\in
\mathcal{E}$ satisfy $f(x)=f_{\sigma(x)}(x)$ for all $x$. We denote by $\mathcal{S}$ the set of elementary selections.
\end{definition}

\begin{definition}[Selection gradient]
\label{def:selection-gradient}
Given $f\in\mathcal{S}$ with selection $(\sigma,f_1,\dots,f_m)$, the
\emph{selection derivative} of $f$ with respect to $\sigma$ is:
\begin{equation*}
\widehat\nabla_\sigma f : x \mapsto \nabla f_{\sigma(x)}(x).
\end{equation*}
\end{definition}

\section{Further extensions}
\label{app:extensions}
The model studied here can be extended in several directions without losing the structure on which the analysis rests, namely the closed-form follower response and the elementary-selection property of the leader's objective. The following extensions are direct applications of the current framework.

\subsection{Intra-time discomfort}
In \eqref{model:cco}, we considered a quadratic discomfort function $\Discomfort_k(x_k, x^0_k) = \tfrac{1}{2\alpha_k} \|(x_k - x^0_k)\|^2$. A natural extension is to consider a diagonal discomfort function of the form:
\begin{equation*}
\Discomfort_k(x_k, x^0_k) = \tfrac{1}{2\alpha_{k}} \| A_k (x_{k} - x^0_{k}) \|, 
\end{equation*}
where $\alpha_{k}$ is the flexibility parameter of client $k$, and $A_k$ is an invertible matrix that models the intra-time discomfort. The consumption best response remains in closed form, and it is given by:
\begin{equation*}
x_k^\star(p) = A^{-1}_k \Proj_{\mathcal{X}_k}\big(A_k x^0_k - \alpha_k p\big).
\end{equation*}
The participation best response remains unchanged, using the value function $\Vill_k(p) = \Bill(p, x^\star_k(p)) + \Discomfort_k(x^\star_k(p), x^0_k)$, and the leader's objective remains an elementary selection.

\subsection{Nonseparable cost}
In the study we conducted so far, we assumed that the retailer's cost function is separable in clients, the total cost is given by the sum over clients weighted by their participation and weights: $\Cost((x_k), (y_k)) = \textstyle\sum_{k=1}^K \Cost(x_k) y_k w_k$. A natural extension is to consider a cost function that is non-separable in clients, but considers just the total load at each instant:
\begin{equation*}
\Cost((x_k), (y_k)) = \Cost\left(\left(\sum_{k=1}^Kx_{t,k} y_k w_k \right)_{t \in \llbracket T \rrbracket}\right),
\end{equation*}
The consumption and participation best responses are unchanged, and the leader's objective remains an elementary selection, so the analysis of Section~\ref{subsec:pointwise-pricing} applies only with a gradient ascent and not stochastic gradient ascent, since the leader's objective is no longer separable in clients.

\subsection{Beyond the quadratic penalization}
\label{subsec:ext-penalization}
In~\eqref{model:cp}, we choose a quadratic regularization for the participation level. Nevertheless, we can always replace $\tfrac{1}{2\beta_k}y_k^2$ by $\tfrac{1}{\beta_k}\psi(y_k)$, where $\psi\colon\R\to\R\cup\{+\infty\}$ is proper, lower semicontinuous and strictly convex with $\operatorname{dom}\psi \subseteq [0,1]$; the model of Section~\ref{sec:mod} is the case $\psi = \tfrac12(\cdot)^2 + \delta_{[0,1]}$ with $\delta_{[0,1]}$ is equal to zero inside $[0,1]$ and equal to $+\infty$ outside $[0,1]$; the consumption best response is unchanged, since $\psi$ does not involve $x_k$, and after eliminating the additive term $\Vill^0_k$ the participation subproblem is:
\begin{equation*}
\min_{y_k} \tfrac{1}{\beta_k}\psi(y_k) + \big(\Vill_k(p) - \Vill^0_k\big)\,y_k ,
\end{equation*}
whose unique solution is
\begin{equation}
\label{eq:ext-participation-br}
y_k = \nabla\psi^*\big(\beta_k(\Vill^0_k - \Vill_k(p))\big),
\end{equation}
where $\psi^*$ is the Legendre--Fenchel conjugate of $\psi$, the indicator $\delta_{[0,1]}$ being included in $\psi$ when the box constraint is imposed separately.

\smallskip

Two hypotheses are needed for~\eqref{eq:ext-participation-br} to be usable. First, $\psi^*$ must be differentiable. Since $\operatorname{dom}\psi$ is bounded, $\psi^*$ is finite on all of $\R$, and it is differentiable there as soon as $\psi$ is strictly convex. Second, $\nabla\psi^*$ must be available in closed form for the complexity statements to survive. Note that $\nabla\psi^*$ is automatically nondecreasing and, because $\operatorname{dom}\psi \subseteq [0,1]$, takes values in $[0,1]$: the best response is feasible by construction and remains monotone in the surplus $\Vill^0_k - \Vill_k(p)$, so the qualitative reading of the model is preserved. For the elementary-selection property (Theorem~\ref{thm:pointwise-selection}), it suffices in addition that $\nabla\psi^*$ be an elementary selection; the rest of the proof is obtained by stability of $\mathcal{S}$.

\begin{example}[Logit model]
Taking $\psi$ to be the negative entropy of the accept/reject choice,
\begin{equation*}
\psi(y) = y\log y + (1-y)\log(1-y), \qquad y \in [0,1],
\end{equation*}
is a case of interest, because its conjugate gradient is the logit map $\nabla\psi^*(s) = (1 + e^{-s})^{-1}$, so that
\begin{equation*}
y_k = \big(1 + \exp(-\beta_k(\Vill^0_k - \Vill_k(p)))\big)^{-1},
\end{equation*}
with $\beta_k$ playing the role of an inverse temperature: entropic regularization of the accept/reject choice induces logit (softmax) acceptance probabilities, whereas the quadratic regularization induces the truncated-linear probabilities of Figure~\ref{fig:client-choice}.
\end{example}

\subsection{Time-of-use menu design}
\label{app:ext-menu}
We have so far considered that the retailer proposes a single offer $(q,h)$ to all clients. We now consider the case where the retailer instead proposes a menu $(q^i,h^i)_{i \in \llbracket I \rrbracket}$ of $I$ offers, where each client has the right to select the offer or fractions of the offers they prefer. In this case, the lower-level problem of client $k$ can be described by:
\begin{align} \label{model:cp:menu}
\min_{x_k, y_k} \quad & \textstyle\sum_{i=1}^I \left[ \Bill(p^i, x^i_k) + \Discomfort_k(x^i_k, x^0_k) \right] y^i_k + \Vill^0_k (1-\sum_{i=1}^I y^i_k) + \tfrac{1}{2\beta_k} \|y_k\|^2 \\
\text{s.t.} \quad & x_k^i \in \mathcal{X}_k, \quad \forall i \in \llbracket I \rrbracket, \notag \\
&y_k \in \mathcal{Y}, \notag
\end{align}
where $p^i = p(q^i,h^i)$ for all $i \in \llbracket I \rrbracket$, and where the set $\mathcal{Y}$ is given by:
\begin{equation*}
\mathcal{Y} = \{y \in [0,1]^{I} \mid \textstyle\sum_{i=1}^I y^i \le 1\}.
\end{equation*}
The variable $y_k^i$ is the fraction of client $k$ selecting offer $i$, or equivalently, is the probability of client $k$ selecting offer $i$, with the convention that $i = 0$ represents the outside option. The variable $x^i_k$ is the consumption profile of client $k$ if they select offer $i$.

\smallskip

The leader's model in this case is:
\begin{align*}
\max_{q, h} \quad & \textstyle\sum_{i=1}^I \textstyle\sum_{k=1}^K \left[ \Bill(p(q^i,h^i), x_k^i) - \Cost(x_k^i) \right] y_k^i w_k \\
\text{s.t.} \quad & q^i \in \mathcal{Q}, \quad h^i \in \mathcal{H}, \qquad \forall i \in \llbracket I \rrbracket, \\
& (x_k,y_k) \text{ solves $\eqref{model:cp:menu}$}, \qquad \forall k \in \llbracket K \rrbracket,
\end{align*}

The lower-level problem inherits the nonconvexity from the original model \ref{model:cp}, nevertheless, the nested nature survives. Each inner minimization over $x^i_k$ may be carried out independently of $y_k$, and the outer problem in $y_k$ is a strongly convex quadratic program over the simplex. The best responses, in this case, are:
\begin{align*}
x_k^i &= \Proj_{\mathcal{X}_k}\!\big(x_k^0 - \alpha_k\, p(q^i, h^i)\big), \\
y_k &= \Proj_{\mathcal{Y}}\!\big(\beta_k (\Vill_k^0 \mathbf{1} - \hat{\Vill}_k)\big),
\end{align*}
where $\hat{\Vill}_k = (\Vill_k(p^i))_{i \in \llbracket I \rrbracket}$, and where $\Vill_k(p)$ was introduced in \eqref{model:cco}. Only offers with $y^i_k > 0$ have a determined consumption profile; for the others (where $y^i_k = 0$), the formula above is a valid selection, as the objective does not depend on $x^i_k$. Projection onto $\mathcal{Y}$ costs $O(I\log{I})$, so the follower oracle costs $O(I\,T\log{T}+I\log{I})$. The structure of the analysis is unchanged, with complexity scaling in $I$. The leader's variable count is multiplied by $I$, so the schedule set becomes $\mathcal{H}^{I}$ and complete enumeration is no longer a viable option, as it was already too costly to use even in the single-offer case. Consistently with Section~\ref{subsec:ext-penalization}, the quadratic regularizer makes the projection onto $\mathcal{Y}$ sparse, so offers dominated by more than a threshold receive exactly zero share, whereas the entropic regularizer would spread strictly positive shares over the whole menu. 

\subsection{Day-dependent time-of-use pricing}
\label{app:multi-day}
We now extend the model, moving from the single typical day considered so far to a setting with multiple days, each with its own schedule and price. This is relevant for seasonal pricing, where the retailer may want to propose different offers for different seasons or day types (e.g., weekdays vs weekends). Let's consider $D$ representative days, and for each day $d \in \llbracket D \rrbracket$, the retailer proposes a schedule $h^d$ and a price vector $q^d$. The consumption of client $k$ on day $d$ is denoted by $x_k^d$, and the participation variable $y_k$ is now a single variable that represents the probability of client $k$ accepting the offer across all days. For each day $d$, the baseline profile of client $k$ is denoted $x^{0,d}_k$ and its consumption feasible set is:
\begin{equation*}
\mathcal{X}^d_k := \{x \in \R^{T} \mid \textstyle\sum_{t = 1}^T x_t = \textstyle\sum_{t = 1}^T x^{0,d}_{t,k}, \, x^{\lowerb, d}_{t,k} \le x_t \le x^{\upperb, d}_{t,k}, \, \forall t \in T\}.
\end{equation*}
So the lower-level problem for client $k$ becomes a multi-day problem, where the client chooses consumption profiles $x_k^d$ for each day $d$ and a single participation variable $y_k$:
\begin{align*} \label{model:cp:seas}
\min_{x_k, y_k} \quad & \left\{ \textstyle\sum_{d = 1}^D \left[ \Bill(p(q^d, h^d), x_k^d) + \Discomfort_k(x_k^d, x^{0,d}_k) \right] w^d \right\} y_k + \Vill^0_k (1-y_k) + \tfrac{1}{2\beta_k} y_k^2, \\
\text{s.t.} \quad & x_k^d \in \mathcal{X}^d_k, \qquad \forall d \in \llbracket D \rrbracket, \\
& y_k \in [0,1],
\end{align*}
where $y_k$ is the probability (or fraction) of client $k$ accepting the offer as a whole, and $w^d \ge 0$ with $\sum_{d=1}^D w^d = 1$ weights day $d$ (its frequency in the horizon). The same reasoning as in Section~\ref{sec:analytical} applies, and the best responses can be computed in closed form:
\begin{align*}
x_k^d &= \Proj_{\mathcal{X}^d_k}\big(x_k^{0,d} - \alpha_k\, p(q^d, h^d)\big), \\
y_k &= \Proj_{[0,1]}\Big(\beta_k\Big(\Vill^0_k - \textstyle\sum_{d=1}^D w^d \Vill^d_k\Big)\Big),
\end{align*}
where $\Vill^d_k = \Bill(p(q^d,h^d), x^d_k) + \Discomfort_k(x^d_k, x^{0,d}_k)$. The follower oracle costs scales with the number of days, and the participation response is again a truncated affine function, now of the aggregated value $\sum_d w^d \Vill^d_k$, so Theorem~\ref{thm:pointwise-selection} and the complexity statements still hold. The retailer's optimization problem is now:
\begin{align*}
\max_{q, h} \quad & \textstyle\sum_{k=1}^K \textstyle\sum_{d=1}^D \left[\Bill(p(q^d,h^d), x_k^d) - \Cost(x_k^d) \right] y_k w^d w_k \\
\text{s.t.} \quad & q^d \in \mathcal{Q}, \quad h^d \in \mathcal{H}, \qquad \forall d \in \llbracket D \rrbracket, \\
& (x_k,y_k) \text{ solves the multi-day lower-level problem}, \qquad \forall k \in \llbracket K \rrbracket.
\end{align*}
The consumption responses decompose across days, but the participation decision does not: a single $y_k$ aggregates all days, so an offer that is unattractive on one day can be compensated on another, and the leader's problem does \emph{not} separate into $D$ independent single-day problems. This coupling, rather than the dimension, is what distinguishes the day-dependent model from $D$ copies of the one-day model. The price to pay is that the admissible schedule set becomes $\mathcal{H}^{D}$, so the enumeration baseline of Section~\ref{subsubsec:enumeration} is not practical, and the gradient-based methods become the only ones that remain practicable. The two extensions above combine without difficulty: a multi-day multi-offer is obtained by letting $y_k$ range over the simplex $\mathcal{Y}$ and aggregating $\Vill^{i}_k = \textstyle\sum_{d} w^d \Vill^{i,d}_k$ over days before projecting. 

\section{Supplementary experimental results}
In this appendix, we report additional experimental results that complement the results presented in Section~\ref{subsec:num:rtou} and in Section~\ref{subsec:num:tou}.

\subsection{Relaxed time-of-use pricing}
\label{app:rtou-exp}

Figure~\ref{fig:rtou:shifts} represents the average relative consumption shift under the relaxed time-of-use pricing model studied in Section~\ref{subsec:num:rtou}, while Table~\ref{tab:rtou:participation} represents the average participation in the same setting. 

\begin{figure}[!htbp]
\centering
\resizebox{\textwidth}{!}{%
\begin{tabular}{ccc}
\begin{tikzpicture}
\begin{axis}[width=5cm, height=4.5cm, grid=major, ymin=-11, ymax=11,
xtick={1,5,10,15,20,24},
minor xtick={2,3,4,6,7,8,9,11,12,13,14,16,17,18,19,21,22,23},
grid=major,
xminorgrids=true,
minor grid style={very thin, gray!15},]
\addplot[myblue, thick, mark=*] table[
col sep=comma,
x=Hour,
y=shifts_01_50_Direct_N2,
] {experiments/rtou_shifts_summary.csv};
\end{axis}
\end{tikzpicture} &

\begin{tikzpicture}
\begin{axis}[width=5cm, height=4.5cm, grid=major, ymin=-11, ymax=11,
xtick={1,5,10,15,20,24},
minor xtick={2,3,4,6,7,8,9,11,12,13,14,16,17,18,19,21,22,23},
grid=major,
xminorgrids=true,
minor grid style={very thin, gray!15},]
\addplot[myred, thick, mark=*] table[
col sep=comma,
x=Hour,
y=shifts_1_50_Direct_N2,
] {experiments/rtou_shifts_summary.csv};
\end{axis}
\end{tikzpicture} &

\begin{tikzpicture}
\begin{axis}[width=5cm, height=4.5cm, grid=major, ymin=-11, ymax=11,
xtick={1,5,10,15,20,24},
minor xtick={2,3,4,6,7,8,9,11,12,13,14,16,17,18,19,21,22,23},
grid=major,
xminorgrids=true,
minor grid style={very thin, gray!15},]
\addplot[mygreen, thick, mark=*] table[
col sep=comma,
x=Hour,
y=shifts_10_50_Direct_N2,
] {experiments/rtou_shifts_summary.csv};
\end{axis}
\end{tikzpicture} \\

\begin{tikzpicture}
\begin{axis}[width=5cm, height=4.5cm, grid=major, ymin=-11, ymax=11,
xtick={1,5,10,15,20,24},
minor xtick={2,3,4,6,7,8,9,11,12,13,14,16,17,18,19,21,22,23},
grid=major,
xminorgrids=true,
minor grid style={very thin, gray!15},]
\addplot[myblue, thick, mark=*] table[
col sep=comma,
x=Hour,
y=shifts_01_50_Direct_N3,
] {experiments/rtou_shifts_summary.csv};
\end{axis}
\end{tikzpicture} &

\begin{tikzpicture}
\begin{axis}[width=5cm, height=4.5cm, grid=major, ymin=-11, ymax=11,
xtick={1,5,10,15,20,24},
minor xtick={2,3,4,6,7,8,9,11,12,13,14,16,17,18,19,21,22,23},
grid=major,
xminorgrids=true,
minor grid style={very thin, gray!15},]
\addplot[myred, thick, mark=*] table[
col sep=comma,
x=Hour,
y=shifts_1_50_Direct_N3,
] {experiments/rtou_shifts_summary.csv};
\end{axis}
\end{tikzpicture} &

\begin{tikzpicture}
\begin{axis}[width=5cm, height=4.5cm, grid=major, ymin=-11, ymax=11,
xtick={1,5,10,15,20,24},
minor xtick={2,3,4,6,7,8,9,11,12,13,14,16,17,18,19,21,22,23},
grid=major,
xminorgrids=true,
minor grid style={very thin, gray!15},]
\addplot[mygreen, thick, mark=*] table[
col sep=comma,
x=Hour,
y=shifts_10_50_Direct_N3,
] {experiments/rtou_shifts_summary.csv};
\end{axis}
\end{tikzpicture}
\end{tabular}
}
\caption{Average relative consumption shift $\sum_{k=1}^K \frac{x_k^\star - x_k^0}{x_k^0} w_k$ in $\%$ for 50 clients under the two-period model (top) and the three-period model (bottom); $\alpha=0.1$ (blue, left), $\alpha=1$ (red, middle), $\alpha=10$ (green, right).}
\label{fig:rtou:shifts}
\end{figure}
\FloatBarrier

\begin{table}[!htbp]
\centering
\begin{tabular}{c | cc | cc | cc}
\toprule
\multicolumn{1}{c}{} & \multicolumn{2}{c}{10} & \multicolumn{2}{c}{50} & \multicolumn{2}{c}{100} \\
\cmidrule{1-1} \cmidrule(lr){2-3} \cmidrule(lr){4-5} \cmidrule(lr){6-7}
$\alpha$ & $N=2$ & $N=3$ & $N=2$ & $N=3$ & $N=2$ & $N=3$ \\
\midrule
0.1 & 20.99\% & 22.78\% & 20.66\% & 19.70\% & 19.56\% & 19.67\%\\
1 & 20.08\% & 22.55\% & 20.12\% & 19.62\% & 19.33\% & 19.62\%\\
10 & 23.05\% & 21.45\% & 19.10\% & 19.70\% & 18.14\% & 19.16\% \\
\bottomrule
\end{tabular}
\caption{Relaxed time-of-use pricing model participation, for the two-period ($N=2$) and the three-period ($N=3$) models, obtained with Algorithm~\ref{alg:sgd-tou}.}
\label{tab:rtou:participation}
\end{table}
\FloatBarrier

\subsection{Time-of-use pricing}
\label{app:tou-exp}

Figure~\ref{fig:tou:shifts} represents the average relative consumption shift under the time-of-use pricing model studied in Section~\ref{subsec:num:tou}, while Table~\ref{tab:tou:participation} represents the average participation in the same setting. 

\begin{figure}[!htbp]
\centering
\resizebox{\textwidth}{!}{%
\begin{tabular}{ccc}
\begin{tikzpicture}
\begin{axis}[width=5cm, height=4.5cm, grid=major, ymin=-11, ymax=11,
xtick={1,5,10,15,20,24},
minor xtick={2,3,4,6,7,8,9,11,12,13,14,16,17,18,19,21,22,23},
grid=major,
xminorgrids=true,
minor grid style={very thin, gray!15},]
\addplot[myblue, thick, mark=*] table[
col sep=comma,
x=Hour,
y=shifts_01_50_Direct_N2,
] {experiments/tou_shifts_summary.csv};
\end{axis}
\end{tikzpicture} &

\begin{tikzpicture}
\begin{axis}[width=5cm, height=4.5cm, grid=major, ymin=-11, ymax=11,
xtick={1,5,10,15,20,24},
minor xtick={2,3,4,6,7,8,9,11,12,13,14,16,17,18,19,21,22,23},
grid=major,
xminorgrids=true,
minor grid style={very thin, gray!15},]
\addplot[myred, thick, mark=*] table[
col sep=comma,
x=Hour,
y=shifts_1_50_Direct_N2,
] {experiments/tou_shifts_summary.csv};
\end{axis}
\end{tikzpicture} &

\begin{tikzpicture}
\begin{axis}[width=5cm, height=4.5cm, grid=major, ymin=-11, ymax=11,
xtick={1,5,10,15,20,24},
minor xtick={2,3,4,6,7,8,9,11,12,13,14,16,17,18,19,21,22,23},
grid=major,
xminorgrids=true,
minor grid style={very thin, gray!15},]
\addplot[mygreen, thick, mark=*] table[
col sep=comma,
x=Hour,
y=shifts_10_50_Direct_N2,
] {experiments/tou_shifts_summary.csv};
\end{axis}
\end{tikzpicture} \\

\begin{tikzpicture}
\begin{axis}[width=5cm, height=4.5cm, grid=major, ymin=-11, ymax=11,
xtick={1,5,10,15,20,24},
minor xtick={2,3,4,6,7,8,9,11,12,13,14,16,17,18,19,21,22,23},
grid=major,
xminorgrids=true,
minor grid style={very thin, gray!15},]
\addplot[myblue, thick, mark=*] table[
col sep=comma,
x=Hour,
y=shifts_01_50_Direct_N3,
] {experiments/tou_shifts_summary.csv};
\end{axis}
\end{tikzpicture} &

\begin{tikzpicture}
\begin{axis}[width=5cm, height=4.5cm, grid=major, ymin=-11, ymax=11,
xtick={1,5,10,15,20,24},
minor xtick={2,3,4,6,7,8,9,11,12,13,14,16,17,18,19,21,22,23},
grid=major,
xminorgrids=true,
minor grid style={very thin, gray!15},]
\addplot[myred, thick, mark=*] table[
col sep=comma,
x=Hour,
y=shifts_1_50_Direct_N3,
] {experiments/tou_shifts_summary.csv};
\end{axis}
\end{tikzpicture} &

\begin{tikzpicture}
\begin{axis}[width=5cm, height=4.5cm, grid=major, ymin=-11, ymax=11,
xtick={1,5,10,15,20,24},
minor xtick={2,3,4,6,7,8,9,11,12,13,14,16,17,18,19,21,22,23},
grid=major,
xminorgrids=true,
minor grid style={very thin, gray!15},]
\addplot[mygreen, thick, mark=*] table[
col sep=comma,
x=Hour,
y=shifts_10_50_Direct_N3,
] {experiments/tou_shifts_summary.csv};
\end{axis}
\end{tikzpicture}
\end{tabular}
}
\caption{Average relative consumption shift $\sum_{k=1}^K \frac{x_k^\star - x_k^0}{x_k^0} w_k$ in $\%$ for 50 clients under the two-period model (top) and the three-period model (bottom); $\alpha=0.1$ (blue, left), $\alpha=1$ (red, middle), $\alpha=10$ (green, right).}
\label{fig:tou:shifts}
\end{figure}
\FloatBarrier

\begin{table}[!htbp]
\centering
\begin{tabular}{c | cc | cc | cc}
\toprule
\multicolumn{1}{c}{} & \multicolumn{2}{c}{10} & \multicolumn{2}{c}{50} & \multicolumn{2}{c}{100} \\
\cmidrule{1-1} \cmidrule(lr){2-3} \cmidrule(lr){4-5} \cmidrule(lr){6-7}
$\alpha$ & $N=2$ & $N=3$ & $N=2$ & $N=3$ & $N=2$ & $N=3$ \\
\midrule
0.1 & 20.55\% & 20.49\% & 21.31\% & 19.11\% & 19.29\% & 19.87\% \\
1 & 20.77\% & 26.23\%& 20.84\% & 19.30\% & 19.54\% & 20.01\% \\
10 & 24.19\% & 23.11\% & 19.73\% & 18.92\% & 18.90\% & 18.56\% \\
\bottomrule
\end{tabular}
\caption{Time-of-Use pricing model participation comparison using the direct method.}
\label{tab:tou:participation}
\end{table}
\FloatBarrier

\end{document}